\newtheorem{thm}{Theorem}
\newtheorem{lem}[thm]{Lemma}
\newtheorem{prop}[thm]{Proposition}
\theoremstyle{definition}
\newtheorem{assump}{Assumption}
\newtheorem{ex}{Example}
\newtheorem{rem}{Remark}
\newtheorem{prob}{Problem}
\numberwithin{equation}{section}
\numberwithin{thm}{section}
\numberwithin{rem}{section}
\numberwithin{ex}{section}
\def\e{\mathrm{e}}
\def\i{\mathrm{i}}
\def\d{\mathrm{d}}
\begin{document}

\title{
Potential theoretic approach to design of accurate formulas for function approximation in symmetric weighted Hardy spaces}
\shorttitle{POTENTIAL THEORETIC APPROACH TO DESIGN OF ACCURATE FORMULAS}

\author{
{\sc Ken'ichiro Tanaka$^{\ast}$} \\
Department of Mathematical Engineering, Faculty of Engineering, Musashino University \\
3-3-3, Ariake, Koto-ku, Tokyo 135-8181, Japan \\
$^{\ast}${\rm Corresponding author: ketanaka@musashino-u.ac.jp} \\[3pt]
{\sc Tomoaki Okayama} \\
Department of Systems Engineering, Graduate School of Information Sciences, Hiroshima City University \\
3-4-1, Ozuka-higashi, Asaminami-ku, Hiroshima 731-3194, Japan \\[3pt]
{\sc Masaaki Sugihara} \\
Department of Physics and Mathematics, College of Science and Engineering, Aoyama Gakuin University \\
5-10-1, Fuchinobe, Chuo-ku, Sagamihara-shi, Kanagawa 252-5258, Japan
}
\shortauthorlist{K.~Tanaka, T.~Okayama, M.~Sugihara}

\maketitle

\begin{abstract}
% Body of abstract:
{
We propose a method for designing accurate interpolation formulas on the real axis
for the purpose of function approximation in weighted Hardy spaces. 
%As a weighted Hardy space, 
In particular, 
we consider the Hardy space of functions that are analytic in a strip region around the real axis, 
being characterized by a weight function $w$ that determines
the decay rate of its elements in the neighborhood of infinity. 
Such a space is considered as a set of functions that are transformed
by variable transformations that realize a certain decay rate at infinity. 
Popular examples of such transformations are given by the
single exponential (SE) and double exponential (DE) transformations 
for the SE-Sinc and DE-Sinc formulas, 
which are very accurate owing to the accuracy of sinc interpolation 
in the weighted Hardy spaces with single and double exponential weights $w$, respectively. 
However, it is not guaranteed that 
the sinc formulas are optimal in weighted Hardy spaces, 
although Sugihara has demonstrated that they are near optimal.
An explicit form for an optimal approximation formula 
has only been given in weighted Hardy spaces with SE weights of a certain type. 
In general cases, explicit forms for optimal formulas have not been provided so far. 
We adopt a potential theoretic approach 
to obtain almost optimal formulas in weighted Hardy spaces 
in the case of general weight functions $w$. 
We formulate 
the problem of designing an optimal formula in each space
as an optimization problem written in terms of a Green potential with an external field. 
By solving the optimization problem numerically, 
we obtain an almost optimal formula in each space.  
Furthermore, some numerical results demonstrate the validity of this method. 
In particular, for the case of a DE weight, 
the formula designed by our method outperforms the DE-Sinc formula. }
%Keywords:
{weighted Hardy space;
function approximation; 
potential theory;
Green potential}
\end{abstract}

%------------------------------------------------------------
\section{Introduction}
\label{sec:intro}

We propose a method for designing accurate interpolation formulas on $\mathbf{R}$ 
for the purpose of function approximation in weighted Hardy spaces, which are defined by 
\begin{align}
\label{eq:def_weighted_Hardy}
\boldsymbol{H}^{\infty}(\mathcal{D}_{d}, w)
:=
\left\{
f : \mathcal{D}_{d} \to \mathbf{C} 
\ \left| \
f \text{ is analytic in } \mathcal{D}_{d} \text{ and } 
\| f \| < \infty 
\right.
\right\}, 
\end{align}
where $d>0$, $\mathcal{D}_{d} := \{ z \in \mathbf{C} \mid | \mathrm{Im} z | < d \}$, 
$w$ is a weight function satisfying $w(z) \neq 0$ for any $z \in \mathcal{D}_{d}$, and
\begin{align}
\label{eq:def_weighted_Hardy_norm}
\| f \| := \sup_{z \in \mathcal{D}_{d}} \left| \frac{f(z)}{w(z)} \right|. 
\end{align} 
Each of these is a space of functions that are analytic in the strip region $\mathcal{D}_{d}$, 
being characterized by the decay rate of its elements (functions) in the neighborhood of infinity. 

At first sight, 
each of the spaces $\boldsymbol{H}^{\infty}(\mathcal{D}_{d}, w)$ may appear to be limited as a space of functions 
that we approximate for numerical computations in practical applications. 
However, by using some variable transformations, 
we can transform reasonably wide ranges of functions to those 
in the space $\boldsymbol{H}^{\infty}(\mathcal{D}_{d}, w)$ for some $d$ and $w$. 
Popular examples of such transformations are given by {\it sinc numerical methods} \citep{bib:StengerSinc1993, bib:StengerSinc2011}, 
which are numerical methods based on the sinc function 
$\mathop{\mathrm{sinc}}(x) = \sin(\pi x)/(\pi x)$
and some useful transformations. 
Typical transformations used in the sinc numerical methods are single-exponential (SE) transformations and 
double-exponential (DE) transformations, which map some regions to the strip regions $\mathcal{D}_{d}$ for some $d > 0$
and transform given functions to those on $\mathcal{D}_{d}$ with SE decay or DE decay at infinity. 
For example, the map $\psi: \mathcal{D}_{\pi/4} \to \mathbf{C}$ of SE type given by
\begin{align}
\psi(z) := \tanh(z)
\end{align}
transforms a function $g: \{ \zeta \in \mathbf{C} \mid |\zeta| < 1 \} \to \mathbf{C}$ 
to $f := (g \circ \psi) : \mathcal{D}_{\pi/4} \to \mathbf{C}$. 
If the function $g$ satisfies $g(\zeta) = \mathrm{O}((1-\zeta^{2})^{\beta/2})$ as $\zeta \to \pm 1$ for some $\beta > 0$,  
then the transformed function $f$ has SE decay at infinity. 
Then, for a positive integer $N$ and an appropriately selected real $h>0$, the truncated sinc interpolation 
\begin{align}
f(x) \approx \sum_{k = -N}^{N} f(kh)\, \mathop{\mathrm{sinc}}(x/h - k)
\label{eq:ex_f_sinc}
\end{align}
can approximate $f$ accurately, 
if it is analytic and bounded with respect to the norm in \eqref{eq:def_weighted_Hardy_norm} on $\mathcal{D}_{\pi/4}$. 
In particular, if $f \in \boldsymbol{H}^{\infty}(\mathcal{D}_{\pi/4}, \mathop{\mathrm{sech}}^{\beta}(z))$, 
we can derive a theoretical error estimate of the approximation given by \eqref{eq:ex_f_sinc}. 
The sinc numerical methods involving SE and DE transformations are collectively called SE-Sinc and DE-Sinc methods, respectively. 
For further details about these, 
see \cite{bib:StengerSinc1993, bib:StengerSinc2011, bib:Sugihara_NearOpt_2003, bib:SugiharaMatsuo2004, bib:TanaSugiMuro2009}.

%Sugihara 
%\cite{bib:Sugihara_NearOpt_2003} 
\cite{bib:Sugihara_NearOpt_2003} 
not only derived upper bounds for the errors of the SE-Sinc and DE-Sinc approximations
but also demonstrated that the SE-Sinc and DE-Sinc approximations are ``near optimal''
in $\boldsymbol{H}^{\infty}(\mathcal{D}_{d}, w)$ with $w$ of SE and DE decay types, respectively. 
The term ``near optimality'' means that the upper bounds of the errors of the sinc approximations 
are bounded from below by some lower bounds of $E_{N}^{\mathrm{min}}(\boldsymbol{H}^{\infty}(\mathcal{D}_{d}, w))$, 
where $E_{N}^{\mathrm{min}}(\boldsymbol{H}^{\infty}(\mathcal{D}_{d}, w))$ 
is the minimum worst error of the $(2N+1)$-point interpolation formulas in $\boldsymbol{H}^{\infty}(\mathcal{D}_{d}, w)$. 
For a precise definition of $E_{N}^{\mathrm{min}}(\boldsymbol{H}^{\infty}(\mathcal{D}_{d}, w))$, see Section~\ref{sec:DefOptApprox}.
However, 
an explicit optimal formula attaining $E_{N}^{\mathrm{min}}(\boldsymbol{H}^{\infty}(\mathcal{D}_{d}, w))$
is only known in the limited case that $d = \pi/4$ and $w(z) = \mathop{\mathrm{sech}}^{\beta} (z)$ for some $\beta > 0$. 
In this case, an optimal formula is provided by the results of 
%Ganelius 
%\cite{bib:Ganelius1976} 
\cite{bib:Ganelius1976} 
and 
%Jang and Haber 
%\cite{bib:JangHaber2001}. 
\cite{bib:JangHaber2001}. 
In the other cases, explicit forms of optimal formulas have not yet been derived. 

Therefore, we proceed with the aim of finding
optimal formulas with explicit forms for 
$\boldsymbol{H}^{\infty}(\mathcal{D}_{d}, w)$ in the case of general weight functions $w$. 
We regard an approximation formula as optimal 
if it attains the minimum worst error $E_{N}^{\mathrm{min}}(\boldsymbol{H}^{\infty}(\mathcal{D}_{d}, w))$. 
Using the fact that
\begin{align}
E_{N}^{\mathrm{min}}(\boldsymbol{H}^{\infty}(\mathcal{D}_{d}, w)) 
= 
\inf_{a_{\ell} \in \mathbf{R}} 
\left\{ 
\sup_{x \in \mathbf{R}}
\left|
w(x) \prod_{\ell=-N}^{N} \tanh \left( \frac{\pi}{4d} (x - a_{\ell}) \right)
\right|
\right\}
\label{eq:keyfact_in_intro}
\end{align}
which was shown by 
%Sugihara 
%\cite{bib:Sugihara_NearOpt_2003}, 
\cite{bib:Sugihara_NearOpt_2003}, 
we reduce the problem of finding an optimal formula 
to the minimization problem~\eqref{eq:keyfact_in_intro}.
By taking the logarithm of the objective function in problem~\eqref{eq:keyfact_in_intro}, 
we consider the equivalent problem written in terms of a Green potential with an external field, 
which is presented as Problem~\ref{prob:Original} in Section~\ref{sec:ReducChar}. 
However, Problem~\ref{prob:Original} is not easily tractable owing to the lack of convexity.
Therefore, 
by introducing some approximations and using a potential theoretic approach, 
we arrive at Problem~\ref{prob:2Steps} in Section~\ref{sec:Proc}, 
which yields an approximate solution of Problem~\ref{prob:Original}. 
See the diagram in Figure \ref{fig:diagram_Problems}
for the process we apply to reduce Problem~\ref{prob:Original} to Problem~\ref{prob:2Steps}. 
The full details of this process are presented in Sections~\ref{sec:Reduction} and~\ref{sec:Proc}. 
Finally, we propose a numerical method for solving Problem~\ref{prob:2Steps} and 
find an ``almost optimal'' formula for $\boldsymbol{H}^{\infty}(\mathcal{D}_{d}, w)$. 
\begin{figure}[h]
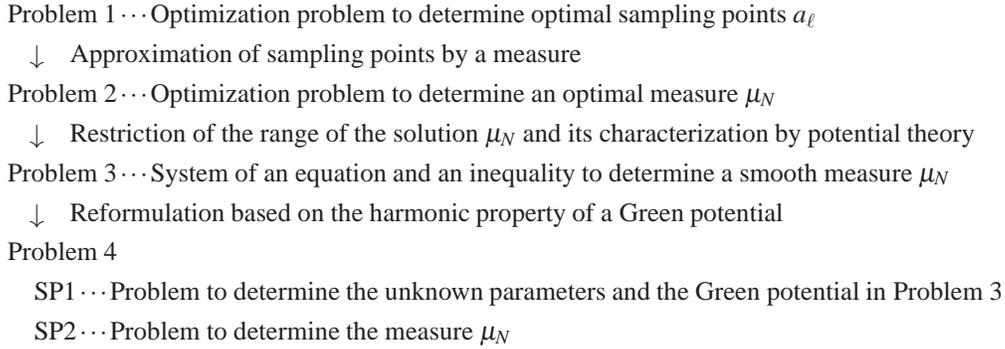

\begin{center}
\begin{align*}
& \text{Problem~\ref{prob:Original}} \cdots \text{Optimization problem to determine optimal sampling points $a_{\ell}$} \\
& \quad \rotatebox[origin=c]{90}{$\leftarrow$}\quad \text{Approximation of sampling points by a measure} \\
& \text{Problem~\ref{prob:ContMeas}} \cdots \text{Optimization problem to determine an optimal measure $\mu_{N}$} \\
& \quad \rotatebox[origin=c]{90}{$\leftarrow$}\quad \text{Restriction of the range of the solution $\mu_{N}$ and its characterization by potential theory} \\
& \text{Problem~\ref{prob:IE_IINEQ}} \cdots \text{System of an equation and an inequality to determine a smooth measure $\mu_{N}$} \\
& \quad \rotatebox[origin=c]{90}{$\leftarrow$}\quad \text{Reformulation based on the harmonic property of a Green potential} \\
& \text{Problem~\ref{prob:2Steps}} \\
& \quad \text{SP1} \cdots \text{Problem to determine the unknown parameters and the Green potential in Problem~\ref{prob:IE_IINEQ}} \\
& \quad \text{SP2} \cdots \text{Problem to determine the measure $\mu_{N}$} 
\end{align*}
\end{center}
\caption{Diagram showing the relations of Problems~\ref{prob:Original}--\ref{prob:2Steps}}
\label{fig:diagram_Problems}
\end{figure}

The rest of this paper is organized as follows. 
In Section \ref{sec:MathPre}, we present some mathematical preliminaries 
concerning assumptions for weight functions $w$, 
the formulation of the optimality of approximations on $\boldsymbol{H}^{\infty}(\mathcal{D}_{d}, w)$, 
and some fundamental facts relating to potential theory. 
Subsequently,   in Section \ref{sec:Reduction}
we approximate the problem (Problem~\ref{prob:Original}) for finding an optimal formula for each $w$
as a potential problem (Problem~\ref{prob:ContMeas}) and characterize its solution. 
We then approximately solve this through Problems~\ref{prob:IE_IINEQ} and \ref{prob:2Steps}
to present a procedure for designing an accurate formula in Section \ref{sec:Proc}. 
In Section \ref{sec:Err}, 
we estimate the error of the formula under certain assumptions. 
We postpone the lengthy proofs of certain lemmas, and present these in Section \ref{sec:Proofs}. 
In Section \ref{sec:NumEx}, 
we present some numerical results supporting the validity of our method. 
The programs used for the numerical experiments are available on the web page \cite{bib:TanaMatlab2015}.
The results show that 
in the case of a DE weight function $w$, 
the formula designed by our method outperforms the DE-Sinc formula. 
Finally, we conclude this paper in Section \ref{sec:Concl}, 
in which we mention some considerations 
about the computational complexity and the numerical stability of our method.

%------------------------------
\section{Mathematical preliminaries}
\label{sec:MathPre}

\subsection{Weight functions and weighted Hardy spaces}

Let $d$ be a positive real number, and let $\mathcal{D}_{d}$ be a strip region defined by 
$\mathcal{D}_{d} := \{ z \in \mathbf{C} \mid |\mathop{\mathrm{Im}} z | < d \}$. 
In order to specify the analytical property of a weight function $w$ on $\mathcal{D}_{d}$, 
we use a function space $B(\mathcal{D}_{d})$ of all functions $\zeta$ that are analytic in $\mathcal{D}_{d}$
such that
\begin{align}
\lim_{x \to \pm \infty} \int_{-d}^{d} |\zeta(x+\i y)|\, \d y = 0
\end{align} 
and 
\begin{align}
\lim_{y \to d-0}
\int_{-\infty}^{\infty} ( |\zeta(x+\i y)| + |\zeta(x-\i y)| )\, \d x < \infty.
\end{align} 
Let $w$ be a complex valued function on $\mathcal{D}_{d}$. 
We regard $w$ as a weight function on $\mathcal{D}_{d}$
if $w$ satisfies the following assumption. 
\begin{assump}
\label{assump:w}
The function $w$ belongs to $B(\mathcal{D}_{d})$, 
does not vanish at any point in $\mathcal{D}_{d}$, 
and takes positive real values on the real axis.
\end{assump}

For a weight function $w$ that satisfies Assumption \ref{assump:w}, 
we define a weighted Hardy space on $\mathcal{D}_{d}$ by \eqref{eq:def_weighted_Hardy}, i.e.,
\begin{align}
\label{eq:def_weighted_Hardy_rev}
\boldsymbol{H}^{\infty}(\mathcal{D}_{d}, w)
:=
\left\{
f : \mathcal{D}_{d} \to \mathbf{C} 
\ \left| \
f \text{ is analytic in } \mathcal{D}_{d} \text{ and } 
\| f \| < \infty 
\right.
\right\}, 
\end{align}
where
\begin{align}
\label{eq:def_weighted_Hardy_norm_rev}
\| f \| := \sup_{z \in \mathcal{D}_{d}} \left| \frac{f(z)}{w(z)} \right|. 
\end{align}

In this paper, for the simplicity of our mathematical arguments, 
we apply the following additional assumptions for a weight function $w$. 

\begin{assump}
\label{assump:w_even}
The function $w$ is even on $\mathbf{R}$. 
\end{assump}

\begin{assump}
\label{assump:w_convex}
The function $\log w$ is concave on $\mathbf{R}$.
\end{assump}

\subsection{Optimal approximation}
\label{sec:DefOptApprox}

We provide a mathematical formulation for the optimality of approximation formulas 
in the weighted Hardy spaces $\boldsymbol{H}^{\infty}(\mathcal{D}_{d}, w)$
with weight functions satisfying Assumptions \ref{assump:w} and \ref{assump:w_even}. 
In this regard, for a given positive integer $N$, 
we first consider all possible $(2N+1)$-point interpolation formulas on $\mathbf{R}$ 
that can be applied to any functions $f \in \boldsymbol{H}^{\infty}(\mathcal{D}_{d}, w)$. 
Then, we choose one of the formulas such that it gives the minimum worst error in $\boldsymbol{H}^{\infty}(\mathcal{D}_{d}, w)$. 
The precise definition of the minimum worst error, denoted by 
$E_{N}^{\mathrm{min}}(\boldsymbol{H}^{\infty}(\mathcal{D}_{d}, w))$, is given by 
\begin{align}
& E_{N}^{\mathrm{min}}(\boldsymbol{H}^{\infty}(\mathcal{D}_{d}, w)) \notag \\ 
& :=
\inf_{1 \leq l \leq N} 
\inf_{\begin{subarray}{c} m_{-l}, \ldots , m_{l} \\ m_{-l}+\cdots+m_{l} = 2N+1 \end{subarray}}
\inf_{\begin{subarray}{c} a_{j} \in \mathcal{D}_{d} \\ \text{distinct} \end{subarray}}
\inf_{\phi_{jk}}
\left[
\sup_{\| f \| \leq 1}
\sup_{x \in \mathbf{R}}
\left|
f(x) - \sum_{j = -l}^{l} \sum_{k = 0}^{m_{j} - 1} f^{(k)}(a_{j})\, \phi_{jk}(x)
\right|
\right],
\label{eq:def_E_min} 
\end{align}
where the $\phi_{jk}$'s are functions that are analytic in $\mathcal{D}_{d}$. 
Here, 
we restrict ourselves without loss of generality to sequences of sampling points that are symmetric with respect to the origin. 
That is, they have the form $a_{-l}, a_{-l+1}, \ldots, a_{l-1}, a_{l}$ with $a_{-k} = - a_{k}\ (k=1,\ldots, l)$. 
This is because we consider even weight functions $w$ according to Assumption~\ref{assump:w_even}. 
Owing to this reason, our definition \eqref{eq:def_E_min} of $E_{N}^{\mathrm{min}}(\boldsymbol{H}^{\infty}(\mathcal{D}_{d}, w))$ 
is slightly different from that given in 
\cite{bib:Sugihara_NearOpt_2003}. 

In the case that $d = \pi/4$ and $w(z) = \mathop{\mathrm{sech}}^{\beta} (z)$ for some $\beta > 0$, 
the exact order of the minimum worst error, 
according to 
\cite{bib:AnderssonQuadHp1980, 
bib:Ganelius1976, 
bib:Sugihara_NearOpt_2003}, 
is given by
\begin{align}
C_{\beta}\, 
\exp \left( 
-\pi \sqrt{\frac{\beta\, N}{2}}
\right)
\leq
E_{N}^{\mathrm{min}}(\boldsymbol{H}^{\infty}(\mathcal{D}_{\pi/4}, \mathrm{sech}^{\beta} (z)))
\leq
C_{\beta}'\, 
\exp \left( 
-\pi \sqrt{\frac{\beta\, N}{2}}
\right), 
\label{eq:ExactEminSech}
\end{align}
where $C_{\beta}$ and $C_{\beta}'$ are positive constants depending on $\beta$. 
In particular, the upper estimate in \eqref{eq:ExactEminSech} is based on the following lemma in \cite{bib:Sugihara_NearOpt_2003}, 
which concerns the transformation of results on the interval $[-1,1]$ 
from {\cite{bib:AnderssonQuadHp1980, bib:Ganelius1976}} to corresponding results on $\mathbf{R}$. 

\begin{lem}[{\cite{bib:AnderssonQuadHp1980, bib:Ganelius1976, bib:Sugihara_NearOpt_2003}}]
\label{lem:Ganelius}
For a positive integer $n$,
there exist $s_{1}', s_{2}', \ldots, s_{n}' \in \mathbf{R}$ such that
\begin{align}
\sup_{x \in \mathbf{R}} 
\left| 
\mathrm{sech}^{\beta}(x) 
\prod_{k=1}^{n} \tanh(x - s_{k}') 
\right| 
\leq
C_{\beta}'\, 
\exp \left( 
-\frac{\pi}{2} \sqrt{\beta\, n}
\right), 
\label{eq:Ganelius}
\end{align} 
where $C_{\beta}'$ is a positive constant depending on $\beta$. 
\end{lem}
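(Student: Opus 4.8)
The plan is to transfer the estimate from the real line to the interval $(-1,1)$, where it coincides with a classical weighted Blaschke-product extremal problem solved by Ganelius and Andersson, and then to pull the optimal nodes back to $\mathbf{R}$.

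First I would introduce the change of variable $u = \tanh x$, which is a bijection of $\mathbf{R}$ onto $(-1,1)$, together with the substitution $t_{k} = \tanh s_{k}'$. By the addition formula for $\tanh$ and the identity $\mathrm{sech}^{2} x = 1 - \tanh^{2} x$, the quantity inside the supremum becomes
\begin{align}
\mathrm{sech}^{\beta}(x) \prod_{k=1}^{n} \tanh(x - s_{k}')
= (1 - u^{2})^{\beta/2} \prod_{k=1}^{n} \frac{u - t_{k}}{1 - u\, t_{k}}.
\label{eq:red_to_interval}
\end{align}
Because $t_{k} = \tanh s_{k}'$ sweeps out all of $(-1,1)$ as $s_{k}'$ ranges over $\mathbf{R}$, the supremum over $x \in \mathbf{R}$ equals the supremum over $u \in (-1,1)$. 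Hence the lemma reduces to producing nodes $t_{1}, \ldots, t_{n} \in (-1,1)$ with
\begin{align}
\sup_{u \in (-1,1)} (1 - u^{2})^{\beta/2} \prod_{k=1}^{n} \left| \frac{u - t_{k}}{1 - u\, t_{k}} \right|
\leq C_{\beta}'\, \exp\!\left( -\frac{\pi}{2} \sqrt{\beta\, n} \right).
\label{eq:interval_bound}
\end{align}

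Second, I would invoke the classical results of Ganelius and Andersson, which treat precisely this weighted finite Blaschke product on $(-1,1)$ and guarantee the existence of nodes attaining the right-hand side of \eqref{eq:interval_bound}. Setting $s_{k}' = \mathrm{artanh}(t_{k})$ then recovers the real nodes asserted in the statement, and the transformation \eqref{eq:red_to_interval} transports the bound back to $\mathbf{R}$, finishing the proof.

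The substantive part, and the step I expect to be the main obstacle, is the interval estimate \eqref{eq:interval_bound} itself. Taking logarithms, its left-hand side equals $\frac{\beta}{2} \log(1 - u^{2}) - \sum_{k=1}^{n} g(u, t_{k})$, where $g(u, t) = \log| (1 - u t)/(u - t) |$ is the Green function of the unit disk restricted to the real axis; thus one must choose the nodes so that the Green potential $\sum_{k} g(\cdot, t_{k})$ stays large throughout the bulk while the weight term $\frac{\beta}{2}\log(1 - u^{2})$ controls the behaviour near $u = \pm 1$. This is an external-field (equilibrium) problem of exactly the type developed later in this paper, and its optimal nodes are \emph{not} the equispaced sinc nodes but the Ganelius points determined by the corresponding equilibrium measure; the sharp constant $\frac{\pi}{2}\sqrt{\beta}$ emerges from the solution of this equilibrium problem. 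Carrying out this construction and the attendant asymptotic analysis uniformly in $u$ is the technical heart of the argument and is supplied by the cited works.
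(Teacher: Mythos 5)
Your proposal is correct and follows essentially the same route as the paper: the substitution $u=\tanh x$, $t_{k}=\tanh s_{k}'$, which by the addition formula converts the left-hand side of \eqref{eq:Ganelius} into the weighted Blaschke product $(1-u^{2})^{\beta/2}\prod_{k=1}^{n}\left|(u-t_{k})/(1-u\,t_{k})\right|$ on $(-1,1)$, is precisely the transformation the paper refers to when it describes Lemma~\ref{lem:Ganelius} as the transfer of the interval results of \cite{bib:AnderssonQuadHp1980, bib:Ganelius1976} to $\mathbf{R}$ carried out in \cite{bib:Sugihara_NearOpt_2003}. Like the paper, you delegate the substantive interval extremal estimate \eqref{eq:interval_bound} to the cited classical works rather than reproving it, so nothing further is needed.
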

\noindent
\cite{bib:Ganelius1976} presents a sequence $\{ s_{k}' \}$ that satisfies inequality~\eqref{eq:Ganelius}. 
However, this is not suitable as a set of sampling points for approximating functions 
because some of the members of $\{ s_{k}' \}$ coincide. 
\cite{bib:JangHaber2001} modify $\{ s_{k}' \}$ 
so that its members are mutually distinct and suited for such approximations. 
In the following, we describe the construction of the modified sequence. 
First, suppose that $n=2N$ is a positive even integer and define $\{ u_{k} \} \subset (0,1)$ by 
\begin{align}
u_{k} & =
\begin{cases}
\varphi(k-1)/\varphi(N_{o}) & (1 \leq k \leq N_{o}), \\
\varphi(k-3/2)/\varphi(N_{o}) & (k = N_{o} + 1), \\
1 - \frac{k-N_{o} - 1}{5(N - N_{o} - 1)} & (N_{o} + 2 \leq k \leq N), 
\end{cases} 
\label{eq:Ganelius_sample} 
\end{align}
where 
$\varphi(x) = \exp(\pi \sqrt{2x/\beta})$ and 
$N_{o} = N - \left \lceil \frac{\pi}{4} \sqrt{N \beta/2} \right \rceil$. 
Next, define $\{ t_{k} \} \subset (-1,1)$ by
\begin{align}
t_{k} = \sqrt{\frac{1-u_{k}}{1+u_{k}}}, \quad t_{-k} = -t_{k} \quad (k=1,2,\ldots, N). 
\end{align}
Finally, define $\{ s_{k} \}$ by
\begin{align}
s_{k} = \mathop{\mathrm{arctanh}} \left( t_{k} \right) \quad (k=\pm 1, \pm 2, \ldots , \pm N).
\end{align}
Then, it follows that 
\begin{align}
\sup_{x \in \mathbf{R}} 
\left| 
\mathrm{sech}^{\beta}(x) 
\prod_{\begin{subarray}{c} k=-N \\ k \neq 0 \end{subarray}}^{N} \tanh(x - s_{k}) 
\right| 
\leq
C_{\beta}'\, 
\exp \left( 
- \pi \sqrt{\frac{\beta\, N}{2}}
\right). 
\label{eq:Ganelius_explicit_sample}
\end{align} 
By using the sequence $\{ s_{k} \}$ and the functions
\begin{align}
T_{d}(x) &= \tanh \left( \frac{\pi}{4d} x \right), \label{eq:tanh_Gan} \\
B_{N}(x; \{s_{\ell}\}, \mathcal{D}_{d}) & = \prod_{k=-N}^{N} \tanh \left( \frac{\pi}{4d} (x - s_{k}) \right), \label{eq:B_Gan} \\
B_{N;k}(x; \{s_{\ell}\}, \mathcal{D}_{d}) & = \prod_{\begin{subarray}{c} -N\leq m \leq N,\\ m \neq k \end{subarray} } 
\tanh \left( \frac{\pi}{4d} (x - s_{m}) \right), \label{eq:B_k_Gan} 
\end{align}
we can obtain the optimal approximation formula 
in $\boldsymbol{H}^{\infty}(\mathcal{D}_{\pi/4}, \mathrm{sech}^{\beta}(z))$ as 
\begin{align}
f(x)
& \approx
\sum_{\begin{subarray}{c} k=-N \\ k \neq 0 \end{subarray}}^{N} f(s_{k})
\frac{ B_{N;k}(x; \{s_{\ell}\}, \mathcal{D}_{\pi/4})\, \mathrm{sech}^{\beta}(x) }
{B_{N;k}(s_{k}; \{s_{\ell}\}, \mathcal{D}_{\pi/4})\, \mathrm{sech}^{\beta}(s_{k})} T_{\pi/4}'(s_{k} - x) \notag \\
& = 
\sum_{\begin{subarray}{c} k=-N \\ k \neq 0 \end{subarray}}^{N} f(s_{k}) \, 
\frac{ 
\displaystyle
\mathrm{sech}^{\beta}(x) 
\prod_{\begin{subarray}{c} -N\leq m \leq N,\\ m \neq k \end{subarray} } 
\tanh \left( x - s_{m} \right)}
{\displaystyle
\mathrm{sech}^{\beta}(s_{k})
\prod_{\begin{subarray}{c} -N\leq m \leq N,\\ m \neq k \end{subarray} } 
\tanh \left( s_{k} - s_{m} \right)}
\, \mathrm{sech}^{2}(s_{k} - x).
\label{eq:Ganelius_formula}
\end{align}
%Formulas \eqref{eq:Ganelius_formula} and \eqref{eq:tanh_Gan}--\eqref{eq:B_k_Gan} are special forms of 
%the general formulas given by \eqref{eq:GeneralOptimalFomula} and \eqref{eq:tanh}--\eqref{eq:B_k} respectively, in Section~\ref{sec:ReducChar}.
In this paper, we call formula \eqref{eq:Ganelius_formula} Ganelius's formula. 

\subsection{Fundamentals in potential theory}
\label{sec:prelim_pot_th}

In reference to the study 
%\cite{bib:SaffTotik_LogPotExtField_1997}, 
\cite{bib:SaffTotik_LogPotExtField_1997}, 
we now describe some fundamental facts relating to potential theory on the complex plane $\mathbf{C}$. 

First, we present some facts concerning logarithmic potentials on $\mathbf{C}$. 
Let $\varSigma \subset \mathbf{C}$ be a compact subset of the complex plane,
and let $\mathcal{M}(\varSigma)$ be the collection of all positive unit Borel measures with support in $\varSigma$. 
The logarithmic potential $U^{\mu}(x)$ for $\mu \in \mathcal{M}(\varSigma)$ is defined by 
\begin{align}
U^{\mu}(x) :=
- \int_{\varSigma} \log |x-z|\, \d \mu(z), 
\label{eq:LogPot}
\end{align}
and the logarithmic energy $I(\mu)$ of $\mu \in \mathcal{M}(\varSigma)$ is defined by
\begin{align}
I(\mu) 
:= \int_{\varSigma} U^{\mu}(x)\, \d \mu(x)
= - \int_{\varSigma} \int_{\varSigma} \log |x-z|\, \d \mu(z) \d \mu(x).  
\end{align}
The energy $W$ of $\varSigma$ is defined by 
\begin{align}
W := \inf_{\mu \in \mathcal{M}(\varSigma)} I(\mu),
\label{eq:energy}
\end{align}
which is either finite or $+\infty$. In the finite case, there is a unique measure $\mu = \mu_{\varSigma}$
that attains the infimum in \eqref{eq:energy}. 
Then, the measure $\mu_{\varSigma}$ is called the equilibrium measure of $\varSigma$. 
Further, the quantity
\begin{align}
\mathop{\mathrm{cap}} (\varSigma) := \exp(-W)
\end{align}
is called the logarithmic capacity of $\varSigma$, and the capacity of an arbitrary Borel set $S$ is defined by
\begin{align}
\mathop{\mathrm{cap}} (S) := \sup \{ \mathop{\mathrm{cap}} (\varSigma) \mid \varSigma \subset S, \ \varSigma \text{ is compact} \}.
\end{align}
A property is said to hold quasi everywhere (q.e.) on a set $S$ if the set of exceptional points 
where the property does not hold is of capacity zero. 

Next, we describe some facts about Green potentials on a region in $\mathbf{C}$. 
Let $G \subset \mathbf{C}$ be a region and let $E \subset G$ be a closed set. 
Moreover, 
let $M$ be a positive real constant, and 
let $\mathcal{M}(E, M)$ be the collection of all positive Borel measures $\mu$ on $E$ with $\mu(E) = M$. 
If the region $G$ has the Green function $g_{G}(x,z)$, then
a Green potential $U_{G}^{\mu}(x)$ with respect to the measure $\mu \in \mathcal{M}(E, M)$ can be defined by 
\begin{align}
U_{G}^{\mu}(x) :=
\int_{E} g_{G}(x,z)\, \d \mu(z). 
\end{align}
Furthermore, if a function $w: E \to \mathbf{C}$ has some appropriate properties, then
the functions $w$ and $Q(x) = -\log w(x)$ can be regarded as a weight and an external field on $E$, 
respectively, 
and the total energy $I^{G}_{w}(\mu)$ is given by 
\begin{align}
I^{G}_{w}(\mu) 
& :=
\int_{E} \left( U_{G}^{\mu}(x) + 2 Q(x) \right) \d \mu(x) \notag \\
& =
\int_{E} \int_{E} g_{G}(x,z)\, \d \mu(z) \d \mu(x) - 2 \int_{E} \log w(x) \, \d \mu(x) \label{eq:Green_energy_pre} \\
& = 
\int_{E} \int_{E} g_{G}(x,z)\, \d \mu(z) \d \mu(x) - \int_{E} \log w(x) \, \d \mu(x) - \int_{E} \log w(z) \, \d \mu(z) \notag \\
& = 
\int_{E} \int_{E} g_{G}(x,z)\, \d \mu(z) \d \mu(x) - 
\frac{1}{M} \left( \int_{E} \int_{E} \log w(x) \, \d \mu(x) \d \mu(z) + \int_{E} \int_{E} \log w(z) \, \d \mu(z) \d \mu(x) \right) \notag \\
& =
\int_{E} \int_{E} \left( g_{G}(x,z) - \log(w(x)w(z))^{1/M} \right) \d \mu(z) \d \mu(x). 
\label{eq:Green_energy}
\end{align}
Furthermore, the first term in \eqref{eq:Green_energy_pre} is called the Green energy. 
%For example, 
%if $G$ is the unit disk, i.e., $G=\{ z \in \mathbf{C} \mid |z| < 1 \}$, then
%its Green function $g_{G}$ is given by $g_{G}(x,z) = -\log |x-z|$ and 
%its Green potential $U_{G}^{\mu}(x)$ is the logarithmic potential on $G$. 
If a measure $\mu \in \mathcal{M}(E,M)$ minimizes the Green energy with the external field, 
which means that the infimum
\begin{align}
\inf_{\mu \in \mathcal{M}(E, M)} I^{G}_{w}(\mu)
\end{align}
is finite and attained by the measure $\mu$, then it is called an equilibrium measure. 

\begin{rem}
\label{rem:total_meas}
In 
%\cite{bib:SaffTotik_LogPotExtField_1997}, 
\cite{bib:SaffTotik_LogPotExtField_1997}, 
only the case $M = 1$ is considered. 
However, in this paper we consider general constants $M > 0$ 
to fix the number of the sampling points used in the formula proposed in Sections \ref{sec:Reduction} and \ref{sec:Proc}.
For further details relating to this, 
see condition \eqref{eq:all_meas_b}.  
\end{rem}

In this paper, we consider the special case
$G = \mathcal{D}_{d}$ and $E = \mathbf{R}$. 
Furthermore, 
we assume that the weight function $w: \mathbf{R} \to \mathbf{C}$ satisfies 
Assumptions \ref{assump:w}--\ref{assump:w_convex}.
Note that $w$ satisfies
\begin{align}
\lim_{\begin{subarray}{c} x \to \pm \infty \\ x \in \mathbf{R} \end{subarray}} w(x) = 0
\label{eq:w_infty}
\end{align}
due to Assumption \ref{assump:w}. 
In this case, 
we have $g_{\mathcal{D}_{d}}(x,z) = -\log|\tanh((\pi/(4d)) (x-z))|$ for $z \in \mathbf{R}$,  
\begin{align}
U_{\mathcal{D}_{d}}^{\mu}(x) = 
-\int_{\mathbf{R}} \log \left| \tanh \left( \frac{\pi}{4d} (x-z) \right) \right| \d \mu(z), 
\label{eq:GreenPotDd}
\end{align}
and 
\begin{align}
I^{\mathcal{D}_{d}}_{w}(\mu) 
& = 
\int_{\mathbf{R}} \int_{\mathbf{R}} \left( g_{\mathcal{D}_{d}}(x,z) - \log (w(x) w(z))^{1/M} \right) \d \mu(z) \d \mu(x) 
\label{eq:WeightedGreenEnergyDd_pre} \\
& = 
\int_{\mathbf{R}} \int_{\mathbf{R}} \left( 
- \log \left| \tanh \left( \frac{\pi}{4d} (x-z) \right) \right| 
- \log (w(x) w(z))^{1/M}
\right)
\d \mu(z) \d \mu(x).
\label{eq:WeightedGreenEnergyDd}
\end{align}

\begin{rem}
\label{rem:admissible}
In 
%\cite{bib:SaffTotik_LogPotExtField_1997},
\cite{bib:SaffTotik_LogPotExtField_1997},
a weight function $w$ is called admissible 
if (i) $w$ is upper semi-continuous on $E$, 
(ii) the set $\{ x \in E \mid w(x) > 0 \}$ has a positive capacity, and 
(iii) $\lim_{x \to \pm \infty} |x|\, w(x) = 0$ in the case that $E$ is unbounded. 
These conditions are used to prove the existence and uniqueness of an equilibrium measure with compact support 
in the case of logarithmic potentials \eqref{eq:LogPot}. 
However, 
in the case that the above setting $G = \mathcal{D}_{d}$ and $E = \mathbf{R}$ is assumed, 
 we do not assume the admissibility of $w$ 
because conditions (i) and (ii) 
are fulfilled on the basis of Assumption \ref{assump:w}, 
and condition (iii) can be substituted for the weaker condition \eqref{eq:w_infty}
as will be demonstrated in the proof of Theorem \ref{thm:UniqueEquiMeas}.
\end{rem}

The following theorem,
which is a slight modification of 
%{\cite[Theorem II.5.10]{bib:SaffTotik_LogPotExtField_1997}},
{Theorem II.5.10 in \cite{bib:SaffTotik_LogPotExtField_1997}},
shows that a unique equilibrium measure exists given certain assumptions. 

\begin{thm}
\label{thm:UniqueEquiMeas}
Let $w$ be a weight function satisfying Assumptions \ref{assump:w}--\ref{assump:w_convex}, 
let $M$ be a positive real constant, and set 
\begin{align}
V_{w}^{\mathcal{D}_{d}} := 
\inf_{\mu \in \mathcal{M}(\mathbf{R}, M)} I_{w}^{\mathcal{D}_{d}}(\mu), 
\end{align}
where $\mathcal{M}(\mathbf{R}, M)$ is the set of Borel measures on $\mathbf{R}$ 
with total measure $M$. Then the following hold. 
\begin{enumerate}
\item $V_{w}^{\mathcal{D}_{d}}$ is finite. 
\item There is a unique measure $\mu_{w}^{\mathcal{D}_{d}} \in \mathcal{M}(\mathbf{R}, M)$ such that
\begin{align}
I_{w}^{\mathcal{D}_{d}}(\mu_{w}^{\mathcal{D}_{d}}) = V_{w}^{\mathcal{D}_{d}}. 
\label{eq:GE_min_Dd}
\end{align}
Moreover, $\mu_{w}^{\mathcal{D}_{d}}$ has compact support and finite Green energy. 
\end{enumerate}
\end{thm}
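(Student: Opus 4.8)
The plan is to adapt the proof of Theorem~II.5.10 in \cite{bib:SaffTotik_LogPotExtField_1997} to our setting $G = \mathcal{D}_d$, $E = \mathbf{R}$, and general total mass $M > 0$, the only substantive departures from the cited reference being (i) the replacement of the admissibility condition (iii) by the weaker decay condition \eqref{eq:w_infty}, as announced in Remark~\ref{rem:admissible}, and (ii) the bookkeeping needed to carry an arbitrary $M$ rather than $M = 1$. Throughout I would work with the total energy $I_w^{\mathcal{D}_d}(\mu)$ in the symmetric form \eqref{eq:Green_energy}, writing the integrand as the kernel
\begin{align}
K(x,z) := g_{\mathcal{D}_d}(x,z) - \log\bigl(w(x)w(z)\bigr)^{1/M},
\label{eq:plan_kernel}
\end{align}
so that $I_w^{\mathcal{D}_d}(\mu) = \iint K\,\d\mu\,\d\mu$ and the problem becomes a standard minimal-energy problem for a symmetric, lower semi-continuous kernel.

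First I would establish finiteness of $V_w^{\mathcal{D}_d}$ by exhibiting a single competitor measure of finite energy: scaling the equilibrium-type measure of a fixed compact interval to total mass $M$ gives a $\mu$ with $U_{\mathcal{D}_d}^{\mu}$ bounded on its support, and since $w$ is continuous and nonvanishing there (Assumption~\ref{assump:w}), the external-field term $-\tfrac{2}{M}\int \log w\,\d\mu$ is finite; this yields $V_w^{\mathcal{D}_d} < +\infty$, and a lower bound away from $-\infty$ follows because the Green function $g_{\mathcal{D}_d} \ge 0$ while $\log w$ is bounded above on $\mathbf{R}$ (being concave and tending to $-\infty$ by \eqref{eq:w_infty}). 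This proves part~1.

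For part~2 the existence argument is the standard vague-compactness plus lower-semicontinuity scheme. I would take a minimizing sequence $\{\mu_n\} \subset \mathcal{M}(\mathbf{R}, M)$ and first argue that it is \emph{tight}: here is where \eqref{eq:w_infty} does the work in place of admissibility condition~(iii). Because $-\log w(x) \to +\infty$ as $|x|\to\infty$ while $g_{\mathcal{D}_d} \ge 0$, any measure placing too much mass near infinity has energy exceeding $V_w^{\mathcal{D}_d} + 1$; quantifying this shows the minimizing masses stay on a fixed compact set, so after passing to a subsequence $\mu_n \to \mu_w^{\mathcal{D}_d}$ vaguely with the limit again in $\mathcal{M}(\mathbf{R}, M)$ (no mass escapes). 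Lower semicontinuity of $\mu \mapsto \iint K\,\d\mu\,\d\mu$ under vague convergence, which holds because $K$ is lower semi-continuous and bounded below on compacta (the singularity of $g_{\mathcal{D}_d}$ along the diagonal is the usual $-\log|\cdot|$ type and is handled by the standard truncation $\min(K, L)$ argument), then gives $I_w^{\mathcal{D}_d}(\mu_w^{\mathcal{D}_d}) \le \liminf I_w^{\mathcal{D}_d}(\mu_n) = V_w^{\mathcal{D}_d}$, so the infimum is attained; compact support and finite Green energy are immediate from the tightness bound and from $V_w^{\mathcal{D}_d} < \infty$. Uniqueness I would obtain from strict convexity of the energy functional: the Green kernel $g_{\mathcal{D}_d}$ is positive definite in the sense that $\int\!\int g_{\mathcal{D}_d}\,\d\nu\,\d\nu > 0$ for every nonzero signed measure $\nu$ of total mass zero and finite energy, so if $\mu_1,\mu_2$ were two minimizers then $\nu = \mu_1 - \mu_2$ would have mass zero and the parallelogram identity would force $I_w^{\mathcal{D}_d}$ of the midpoint strictly below $V_w^{\mathcal{D}_d}$ unless $\nu = 0$.

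The main obstacle is the tightness step, i.e.\ ruling out escape of mass to infinity without the admissibility hypothesis~(iii). The delicate point is that $g_{\mathcal{D}_d}(x,z) = -\log|\tanh(\tfrac{\pi}{4d}(x-z))|$ does \emph{not} grow like $\log|x|$ at infinity — unlike the logarithmic kernel it is bounded (indeed $\tanh \to \pm 1$), so the usual Saff--Totik confinement coming from the kernel itself is unavailable and all the confinement must come from the external field $-\tfrac{2}{M}\log w$. I would therefore verify carefully that the superlinear-in-distance growth is unnecessary and that mere divergence $-\log w(x)\to+\infty$ suffices: since $g_{\mathcal{D}_d}\ge 0$, for any $\mu$ we have $I_w^{\mathcal{D}_d}(\mu) \ge -\tfrac{2}{M}\int \log w\,\d\mu$, and choosing $R$ so large that $-\log w(x) > M\,(V_w^{\mathcal{D}_d}+1)/\varepsilon$ for $|x|>R$ bounds the mass outside $[-R,R]$ by $\varepsilon$ for any near-minimizer, which is exactly the tightness required. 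This is precisely the substitution promised in Remark~\ref{rem:admissible}, and confirming that the boundedness of $g_{\mathcal{D}_d}$ causes no loss in either existence or uniqueness is the crux of the argument.
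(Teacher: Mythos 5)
Your proposal is correct in substance but follows a genuinely different route from the paper. The paper's proof is essentially a two-step reduction: by the scaling identity \eqref{eq:GE_min_Dd_prob_meas} the mass-$M$ problem is converted into the probability-measure problem for the weight $w^{1/M}$, and then Theorem~II.5.10 of \cite{bib:SaffTotik_LogPotExtField_1997} is invoked as a black box; the only work done is to check that the two consequences of admissibility condition (iii) actually used inside that theorem's proof --- the kernel lower bound \eqref{eq:bnd_gGww} and the divergence \eqref{eq:lim_gGww} along weight-degenerating sequences --- already follow from $g_{\mathcal{D}_{d}} \geq 0$, the boundedness of $w$ on $\mathbf{R}$, and the weaker decay condition \eqref{eq:w_infty}, exactly as announced in Remark~\ref{rem:admissible}. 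You instead re-run the direct method from scratch: a competitor measure for finiteness, tightness of minimizing sequences, vague compactness, lower semicontinuity via truncation of the kernel, and uniqueness via positive definiteness of the Green energy on mass-zero signed measures plus the parallelogram identity, carrying the mass $M$ through rather than rescaling. Your version is self-contained and makes explicit what the paper leaves implicit --- that $g_{\mathcal{D}_{d}}(x,z) = -\log|\tanh(\frac{\pi}{4d}(x-z))|$ is bounded at infinity, so, unlike the logarithmic case, all confinement must come from the external field --- while the paper's version is much shorter and localizes the entire novelty to the two displayed conditions. Your tightness estimate is in effect the quantitative form of the paper's verification of \eqref{eq:lim_gGww}.

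Two small repairs are needed in your write-up. First, a bookkeeping slip: with mass-$M$ measures the external-field term is $-2\int \log w \, \d\mu$ by \eqref{eq:Green_energy_pre}; the factor $1/M$ appears only inside the symmetrized double-integral kernel, so your bound $I_{w}^{\mathcal{D}_{d}}(\mu) \geq -\frac{2}{M}\int \log w\, \d\mu$ and the threshold $M(V_{w}^{\mathcal{D}_{d}}+1)/\varepsilon$ need adjusting; also $-\log w$ may be negative on a compact set, so the correct requirement is $\inf_{|x|>R}(-\log w(x)) \geq (V_{w}^{\mathcal{D}_{d}} + 1 + 2M \sup_{\mathbf{R}} \log w)/(2\varepsilon)$. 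Second, compact support of the minimizer is not literally ``immediate from the tightness bound'': tightness prevents escape of mass along the minimizing sequence, but to conclude that $\mathop{\mathrm{supp}} \mu_{w}^{\mathcal{D}_{d}}$ is compact you need one further step --- if $\mu_{w}^{\mathcal{D}_{d}}$ charged $\{|x| > R\}$ for every $R$, transplant that mass onto a fixed interval and use the kernel lower bound \eqref{eq:bnd_gGww} together with the divergence of $-\log w$ to show the total energy strictly decreases, a contradiction. This mass-transfer argument uses exactly the two conditions the paper verifies, so it closes your proof without any new hypotheses.
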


\begin{proof}
It follows from \eqref{eq:WeightedGreenEnergyDd_pre} that
\begin{align}
& 
\inf_{\mu \in \mathcal{M}(\mathbf{R}, M)} I_{w}^{\mathcal{D}_{d}}(\mu) \notag \\
& = 
\inf_{\mu \in \mathcal{M}(\mathbf{R}, M)} 
\left[
M^{2}
\int_{\mathbf{R}} \int_{\mathbf{R}} \frac{g_{\mathcal{D}_{d}}(x,z) - \log(w(x)w(z))^{1/M}}{M^{2}} \, \d \mu(z) \d \mu(x) 
\right] \notag \\
& =
M^{2}
\inf_{\nu \in \mathcal{M}(\mathbf{R}, 1)} 
\left[
\int_{\mathbf{R}} \int_{\mathbf{R}} \left( g_{\mathcal{D}_{d}}(x,z) - \log \left( w(x)^{1/M} w(z)^{1/M} \right) \right) \d \nu(z) \d \nu(x) 
\right] \notag \\
& = 
M^{2}
\inf_{\nu \in \mathcal{M}(\mathbf{R}, 1)} 
I_{w^{1/M}}^{\mathcal{D}_{d}}(\nu). 
\label{eq:GE_min_Dd_prob_meas}
\end{align}
Then, we only have to consider the minimization problem of $I_{w^{1/M}}^{\mathcal{D}_{d}}(\nu)$
over the probability measures $\nu$ on $\mathbf{R}$. 
The unique existence of a solution to this problem is guaranteed by 
%{\cite[Theorem II.5.10]{bib:SaffTotik_LogPotExtField_1997}}
{Theorem II.5.10 in \cite{bib:SaffTotik_LogPotExtField_1997}}, 
provided that the weight $w^{1/M}$ is admissible in the sense explained in Remark \ref{rem:admissible}.
However, we can only use condition \eqref{eq:w_infty} 
instead of condition (iii) in Remark \ref{rem:admissible}. 

In fact, in 
%{\cite[Theorem II.5.10]{bib:SaffTotik_LogPotExtField_1997}}, 
{Theorem II.5.10 in \cite{bib:SaffTotik_LogPotExtField_1997}}, 
condition (iii) is necessary only to show that 
\begin{align}
g_{G}(x,z) - \log (w(x) w(z)) > C
\label{eq:bnd_gGww}
\end{align}
for a certain constant $C > -\infty$, and
\begin{align}
\lim_{n \to \infty} 
\left[
g_{G}(x_{n},z_{n}) - \log (w(x_{n}) w(z_{n})) 
\right] = \infty
\label{eq:lim_gGww}
\end{align}
if $\{ (x_{n}, z_{n}) \} \subset E \times E$ is a sequence with
\begin{align}
\lim_{n \to \infty} \min \{ w(x_{n}), w(z_{n}) \} \to 0.
\end{align}
These conditions guarantee 
the finiteness of $V_{w}^{G}$,  
the existence of the equilibrium measure $\mu_{w}^{G}$, and 
the compactness of the support of $\mu_{w}^{G}$. 

Therefore, to prove Theorem \ref{thm:UniqueEquiMeas}, 
we only have to show that the weight function $w^{1/M}$ satisfies \eqref{eq:bnd_gGww} and \eqref{eq:lim_gGww}
in the case that $G = \mathcal{D}_{d}$ and $E = \mathbf{R}$, using condition \eqref{eq:w_infty}. 
Because we have that 
\begin{align}
g_{\mathcal{D}_{d}}(x,z) = - \log \left| \tanh \left( \frac{\pi}{4d} (x-z) \right) \right| \geq 0
\end{align}
for any $x,z \in \mathbf{R}$, it suffices to consider the function $\log(w(x)w(z))^{-1/M}$. 
First, it follows from Assumption \ref{assump:w} that $w$ is bounded above on $\mathbf{R}$, 
hence $\log(w(x)w(z))^{-1/M}$ is bounded from below. 
Therefore, we have that \eqref{eq:bnd_gGww} holds for $w^{1/M}$. 
Next, if $\{ (x_{n}, z_{n}) \} \subset \mathbf{R} \times \mathbf{R}$ is a sequence with
\begin{align}
\lim_{n \to \infty} \min\{ w(x_{n})^{1/M}, w(z_{n})^{1/M} \} \to 0, 
\end{align}
then we have that $x_{n} \to \pm \infty$ or $z_{n} \to \pm \infty$ by Assumption \ref{assump:w} and \eqref{eq:w_infty}. 
Then, we have that
\begin{align}
\lim_{n \to \infty} \log (w(x_{n}) w(z_{n}))^{-1/M} \to \infty, 
\label{eq:lim_gGww_Dd}
\end{align}
which implies \eqref{eq:lim_gGww} for $w^{1/M}$. 
Thus, we reduce Theorem \ref{thm:UniqueEquiMeas} to 
%{\cite[Theorem II.5.10]{bib:SaffTotik_LogPotExtField_1997}}
{Theorem II.5.10 in \cite{bib:SaffTotik_LogPotExtField_1997}}, 
and the proof is concluded. 
\end{proof}

%------------------------------
\section{A basic idea for designing accurate formulas based on potential theory}
\label{sec:Reduction}

Let $N$ be a positive integer. 
As mentioned in Section \ref{sec:DefOptApprox}, 
we can restrict ourselves without loss of generality 
to sequences of sampling points that are symmetric with respect to the origin. 

%--------------------
\subsection{Reduction of the characterization problem to a problem with a continuous variable}
\label{sec:ReducChar}

We begin with the characterization of 
$E_{N}^{\mathrm{min}}(\boldsymbol{H}^{\infty}(\mathcal{D}_{d}, w))$ by explicit formulas. 
These are given by the following proposition, 
in which we restrict 
%{\cite[Lemma 4.3]{bib:Sugihara_NearOpt_2003}} 
{Lemma 4.3 in \cite{bib:Sugihara_NearOpt_2003}} 
to the case of even weight functions and the sampling points stated above. 
For readers' convenience, 
we describe the sketch of the proof of this proposition 
in Section \ref{sec:proof_opt_err_norm}. 

\begin{prop}[{Lemma 4.3 in \cite{bib:Sugihara_NearOpt_2003}}]
\label{prop:Sugihara2003}
\begin{align}
& E_{N}^{\mathrm{min}}(\boldsymbol{H}^{\infty}(\mathcal{D}_{d}, w)) \notag \\
& =
\inf_{a_{\ell} \in \mathbf{R}}
\left[
\sup_{\| f \| \leq 1}
\sup_{x \in \mathbf{R}} 
\left|
f(x) - \hspace{-1mm} \sum_{k = -N}^{N} \hspace{-1mm} f(a_{k}) 
\frac{ B_{N;k}(x; \{a_{\ell}\}, \mathcal{D}_{d}) w(x) }{B_{N;k}(a_{k}; \{a_{\ell}\}, \mathcal{D}_{d}) w(a_{k})} \frac{4d}{\pi} T_{d}'(a_{k} - x)
\right|
\right]
\notag \\
& = \inf_{a_{\ell} \in \mathbf{R}} 
\left[
\sup_{x \in \mathbf{R}} 
\left|
B_{N}(x; \{a_{\ell}\} , \mathcal{D}_{d})\, w(x)
\right|
\right], 
\label{eq:MinErrBlaschke} 
\end{align}
where 
$T_{d}(x)$, 
$B_{N}(x; \{a_{\ell}\}, \mathcal{D}_{d})$, and
$B_{N;k}(x; \{a_{\ell}\}, \mathcal{D}_{d})$
are defined in \eqref{eq:tanh_Gan}, \eqref{eq:B_Gan}, \eqref{eq:B_k_Gan}, respectively. 
%\(
%T(x) &= \tanh \left( \frac{\pi}{4d} x \right), \label{eq:tanh} \\
%B_{N}(x; \{a_{\ell}\}, \mathcal{D}_{d}) & = \prod_{k=-N}^{N} \tanh \left( \frac{\pi}{4d} (x - a_{k}) \right), \label{eq:B} \\
%B_{N;k}(x; \{a_{\ell}\}, \mathcal{D}_{d}) & = \prod_{\begin{subarray}{c} -N\leq m \leq N,\\ m \neq k \end{subarray} } 
%\tanh \left( \frac{\pi}{4d} (x - a_{m}) \right). \label{eq:B_k}
%\)
\end{prop}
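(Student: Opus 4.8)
The plan is to prove the three-way equality by a sandwich argument, after transplanting everything to the unit disk. Write $F:=f/w$, so that $F$ is analytic in $\mathcal{D}_{d}$ with $\|F\|_{\infty}=\|f\|$, and use the conformal map $\phi(z):=\tanh((\pi/(4d))z)$, which sends $\mathcal{D}_{d}$ onto the unit disk $\{|\zeta|<1\}$ and $\mathbf{R}$ onto $(-1,1)$. Under $\phi$ each factor $\tanh((\pi/(4d))(z-a_{k}))$ becomes the genuine Blaschke factor $(\zeta-\alpha_{k})/(1-\alpha_{k}\zeta)$ with real node $\alpha_{k}:=\phi(a_{k})\in(-1,1)$, so that $B_{N}$ corresponds to a finite Blaschke product $B$ with $|B|=1$ on the boundary circle. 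The three members of the asserted identity are, in order, $E_{N}^{\mathrm{min}}$, the infimum over $\{a_{\ell}\}$ of the worst-case error of the explicit formula, and $\inf_{a_{\ell}}\sup_{x}|B_{N}w|$. Since the explicit formula is one admissible $(2N+1)$-point formula, $E_{N}^{\mathrm{min}}$ is trivially no larger than the middle quantity, so it suffices to prove (i) $\inf_{a_{\ell}}\sup_{x}|B_{N}w|\le E_{N}^{\mathrm{min}}$ and (ii) the middle quantity is $\le\inf_{a_{\ell}}\sup_{x}|B_{N}w|$.

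For (i) I would fix any admissible configuration in \eqref{eq:def_E_min}, that is, symmetric distinct nodes $a_{j}$ with multiplicities $m_{j}$ summing to $2N+1$, and set $f_{\ast}(z):=w(z)\prod_{j}\tanh((\pi/(4d))(z-a_{j}))^{m_{j}}$. Each tanh-factor is analytic in $\mathcal{D}_{d}$ and bounded by $1$ there, so $f_{\ast}$ lies in the unit ball of $\boldsymbol{H}^{\infty}(\mathcal{D}_{d},w)$; and since it vanishes to order $m_{j}$ at $a_{j}$, all the data $f_{\ast}^{(k)}(a_{j})$ with $0\le k<m_{j}$ vanish, so every formula built on this configuration returns $0$ and incurs the error $\sup_{x}|f_{\ast}(x)|$. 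As $\prod_{j}\tanh^{m_{j}}$ is exactly a product of the form $B_{N}$ with coincident nodes, $\sup_{x}|f_{\ast}|\ge\inf_{a_{\ell}}\sup_{x}|B_{N}w|$. Since this holds for every configuration and every choice of the $\phi_{jk}$, taking the infimum gives (i).

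For (ii) I first check that the basis functions in the middle display are cardinal, $L_{k}(a_{j})=\delta_{jk}$, using $B_{N;k}(a_{j})=0$ for $j\neq k$ and $(4d/\pi)T_{d}'(0)=1$. The decisive step is a remainder representation. Writing $\tilde{F}:=F\circ\phi^{-1}\in H^{\infty}$ of the disk and $P(\cdot,\theta)$ for the Poisson kernel, I claim the error admits the form
\begin{equation}
f(x)-\sum_{k=-N}^{N}f(a_{k})L_{k}(x)
=\frac{B_{N}(x)\,w(x)}{2\pi}\int_{0}^{2\pi}\frac{\tilde{F}(\e^{\i\theta})}{B(\e^{\i\theta})}\,P(\phi(x),\theta)\,\d\theta .
\label{eq:remainder_poisson}
\end{equation}
Granting \eqref{eq:remainder_poisson}, the estimate follows at once: on the boundary $|\tilde{F}(\e^{\i\theta})|\le\|f\|$ and $|B(\e^{\i\theta})|=1$, while the Poisson kernel has unit mass, so $|f(x)-\sum_{k}f(a_{k})L_{k}(x)|\le\|f\|\,|B_{N}(x)w(x)|$ for every $x\in\mathbf{R}$. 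Hence the worst-case error of the explicit formula is at most $\sup_{x}|B_{N}w|$, and taking the infimum over $\{a_{\ell}\}$ gives (ii). Combining (i), (ii), and the trivial inequality closes the sandwich; the middle and right members coincide because the choice $f=B_{N}w$ already forces the worst-case error to be at least $\sup_{x}|B_{N}w|$.

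The main obstacle is the identity \eqref{eq:remainder_poisson}. Its proof rests on transplanting the explicit cardinal functions to the disk, where $L_{k}$ becomes the naive Lagrange--Blaschke function multiplied by the factor $(1-\zeta^{2})/(1-\alpha_{k}\zeta)$ (this is precisely the $(4d/\pi)T_{d}'$ term expressed through $\mathrm{sech}^{2}=1-\tanh^{2}$), and then in recognizing that this correction converts the Cauchy-type remainder into the Poisson-weighted form \eqref{eq:remainder_poisson}. Concretely, one uses the Poisson reproduction $\tilde{F}(t)=\frac{1}{2\pi}\int_{0}^{2\pi}\tilde{F}(\e^{\i\theta})P(t,\theta)\,\d\theta$ for $t\in(-1,1)$ together with a residue/Fourier-coefficient computation to verify that subtracting \eqref{eq:remainder_poisson} from $\tilde{F}$ reproduces exactly the interpolant $\sum_{k}\tilde{F}(\alpha_{k})$ times the transplanted cardinal functions. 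The subtlety to watch is that the Riesz projection is unbounded on $L^{\infty}$, so the constant in front of $|B_{N}w|$ would be only $2$ for the naive Lagrange--Blaschke formula; it is the particular $\mathrm{sech}^{2}$ weighting---equivalently the appearance of the Poisson rather than the Cauchy kernel---that pins the constant to $1$ and makes the explicit formula genuinely optimal.
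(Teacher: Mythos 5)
Your step (ii) is correct in substance and is, in fact, the paper's own argument transplanted to the disk: the paper writes the error by residue analysis as
\begin{equation*}
\frac{1}{2\pi\,\i}\int_{\partial \mathcal{D}_{d}} f(z)\,
\frac{B_{N}(x; \{a_{\ell}\}, \mathcal{D}_{d})\, w(x)}{B_{N}(z; \{a_{\ell}\}, \mathcal{D}_{d})\, w(z)}\,
\frac{T_{d}'(z-x)}{T_{d}(z-x)}\,\d z,
\end{equation*}
then uses $|B_{N}(z)|=1$ on $\partial\mathcal{D}_{d}$ and $\frac{1}{2\pi}\int_{\partial\mathcal{D}_{d}}|T_{d}'(z-x)/T_{d}(z-x)|\,|\d z|=1$. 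Since $\frac{1}{2\pi\i}\,\d\log T_{d}(z-x)$ restricted to $\partial\mathcal{D}_{d}$ is precisely the harmonic measure of $\mathcal{D}_{d}$ at $x$, your Poisson-kernel remainder identity is the conformal image of this representation, and it is indeed exact: writing $t=\phi(x)\in(-1,1)$ and using the addition formula $T_{d}(z-x)=(\zeta-t)/(1-t\zeta)$ with $\zeta=\phi(z)$, the Cauchy-type kernel on $|\zeta|=1$ reduces to $P(t,\theta)\,\d\theta/(2\pi)$ by a one-line computation. So, modulo actually carrying out the verification you defer (the paper's residue computation is the quickest way to do it), the trivial inequality, (ii), and your closing remark identifying the middle and right members are all fine.

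The genuine gap is in (i). In the definition \eqref{eq:def_E_min} the infimum runs over sampling points $a_{j}\in\mathcal{D}_{d}$, not only over real ones, and your test function breaks precisely there: the claim that each factor $\tanh\bigl(\tfrac{\pi}{4d}(z-a_{j})\bigr)$ is bounded by $1$ on $\mathcal{D}_{d}$ is false for nonreal $a_{j}$. Indeed $|\tanh(s+\i\theta)|^{2}=(\sinh^{2}s+\sin^{2}\theta)/(\sinh^{2}s+\cos^{2}\theta)$, which exceeds $1$ whenever $\pi/4<|\theta|<3\pi/4$; concretely, for $a_{j}=-\i\epsilon$ with $0<\epsilon<d$ and $z=\i y$ with $d-\epsilon<y<d$ one gets $\bigl|\tanh\bigl(\tfrac{\pi}{4d}(z-a_{j})\bigr)\bigr|=\tan\bigl(\tfrac{\pi}{4d}(y+\epsilon)\bigr)>1$. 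Hence your $f_{\ast}$ need not lie in the unit ball, and in addition a complex-node product is not one of the $B_{N}$'s over which the right-hand infimum in \eqref{eq:MinErrBlaschke} (taken over $\mathbf{R}$) ranges, so the final comparison is also unjustified. The paper closes exactly this hole with the pointwise bound $|\tanh[C_{d}(x-a)]|\geq|\tanh[C_{d}(x-\mathop{\mathrm{Re}}a)]|$ for $x\in\mathbf{R}$, $a\in\mathcal{D}_{d}$ (immediate from the modulus identity above, since $\sin^{2}\theta\geq 0$ and $\cos^{2}\theta\leq 1$). In your disk framework the clean repair is: build $f_{\ast}$ from genuine Blaschke factors $\bigl(\phi(z)-\phi(a_{j})\bigr)/\bigl(1-\overline{\phi(a_{j})}\,\phi(z)\bigr)$, which do have modulus at most $1$ and vanish to the right orders, and then project the nodes to the real axis via the pseudo-hyperbolic inequality $\rho(t,\alpha)\geq\rho(t,\mathop{\mathrm{Re}}\alpha)$ for $t\in(-1,1)$, valid because $(1-t\xi)^{2}\geq t^{2}(t-\xi)^{2}$; since $\mathop{\mathrm{Re}}\alpha=\phi(a')$ for some real $a'$ and the Blaschke factor with a real node is exactly the $\tanh$ factor, this recovers the infimum over real $\{a_{\ell}\}$. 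Without one of these devices your (i) only bounds formulas with real sampling points, and the chain of equalities in \eqref{eq:MinErrBlaschke} is not established.
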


\noindent
The first equality in \eqref{eq:MinErrBlaschke} gives the explicit form of the basis functions\footnote{
In 
%\cite[Lemma 4.3]{bib:Sugihara_NearOpt_2003}, 
Lemma 4.3 in \cite{bib:Sugihara_NearOpt_2003}, 
the form of the basis functions is wrong. 
Here, we have corrected the form by inserting the factor $4d/\pi$ in front of $T'$. 
}. 
That is, if we can obtain sampling points $a_{\ell} \in \mathbf{R}$ that attain the infimum in \eqref{eq:MinErrBlaschke}, 
then the interpolation formula
\begin{align}
\tilde{f}_{N}(x) 
& =
\sum_{k = -N}^{N} f(a_{k})
\frac{ B_{N;k}(x; \{a_{\ell}\}, \mathcal{D}_{d}) w(x) }{B_{N;k}(a_{k}; \{a_{\ell}\}, \mathcal{D}_{d}) w(a_{k})} \frac{4d}{\pi} T_{d}'(a_{k} - x) \notag \\
& =
\sum_{k = -N}^{N} f(a_{k})\,
\frac{ w(x) 
\displaystyle
\prod_{\begin{subarray}{c} -N\leq m \leq N,\\ m \neq k \end{subarray} } 
\tanh \left( \frac{\pi}{4d} (x - a_{m}) \right)}
{\displaystyle
w(a_{k})
\prod_{\begin{subarray}{c} -N\leq m \leq N,\\ m \neq k \end{subarray} } 
\tanh \left( \frac{\pi}{4d} (a_{k} - a_{m}) \right)} 
\, \mathrm{sech}^{2} \left( \frac{\pi}{4d} (a_{k} - x) \right) 
\label{eq:GeneralOptimalFomula}
\end{align}
gives an optimal approximation in $\boldsymbol{H}^{\infty}(\mathcal{D}_{d}, w)$. 
Then, what remains is to determine the sampling points $a_{\ell} \in \mathbf{R}$. 
One criterion for determining these is given by the second equality in \eqref{eq:MinErrBlaschke}. 
Therefore, in the remainder of this paper, we will focus on the optimization problem
\begin{align}
\label{eq:main_opt}
\inf_{a_{\ell} \in \mathbf{R}} 
\left[
\sup_{x \in \mathbf{R}} 
\left|
B_{N}(x; \{a_{\ell}\} , \mathcal{D}_{d})\, w(x)
\right|
\right].
\end{align}
Because the logarithm is a monotonically increasing function on $(0,\infty)$, 
we consider the following problem which is equivalent to \eqref{eq:main_opt}.
\begin{prob}
\label{prob:Original}
Find a sequence $\{ a_{\ell} \}$ of sampling points that attains 
\begin{align}
\inf_{a_{\ell} \in \mathbf{R}} 
\left[
\sup_{x \in \mathbf{R}} 
\left(
V_{\mathcal{D}_{d}}^{\sigma_{a}}(x) + \log w(x) 
\right)
\right], 
\label{eq:main_opt_equiv_log} 
\end{align}
where
\begin{align}
V_{\mathcal{D}_{d}}^{\sigma_{a}}(x) & = \sum_{k = -N}^{N} \log \left| \tanh \left( \frac{\pi}{4d} (x - a_{k}) \right) \right|. \label{eq:minus_G_pot}
\end{align}
\end{prob}
\noindent
The function $U_{\mathcal{D}_{d}}^{\sigma_{a}} = -V_{\mathcal{D}_{d}}^{\sigma_{a}}$ is the Green potential on $\mathcal{D}_{d}$ 
given by the discrete measure $\sigma_{a}$ associated with the sampling points $\{a_{\ell} \}$. That is,
\begin{align}
\sigma_{a}(A) = \sum_{\ell:\, a_{\ell} \in A} 1, \qquad A \subset \mathbf{R}.
\label{eq:disc_meas_a}
\end{align}
In fact, the Green function of the region $\mathcal{D}_{d}$ is given by
\begin{align}
g_{\mathcal{D}_{d}}(x,z) = - \log \left| \tanh \left( \frac{\pi}{4d} (x - z) \right) \right|
\end{align}
if $z \in \mathbf{R}$. 
By applying the measure $\sigma_{a}$ from \eqref{eq:disc_meas_a}, we can rewrite 
$V_{\mathcal{D}_{d}}^{\sigma_{a}}(x)$ in \eqref{eq:minus_G_pot} as
\begin{align}
V_{\mathcal{D}_{d}}^{\sigma_{a}}(x)
=
\int_{-a_{N}}^{a_{N}} \log \left| \tanh \left( \frac{\pi}{4d} (x - z) \right) \right| \, \mathrm{d}\sigma_{a}(z). 
\label{eq:minus_G_pot_rewrited}
\end{align}

For the sake of analytical tractability, 
we replace the discrete measure $\sigma_{a}$ in \eqref{eq:minus_G_pot_rewrited} with a general measure $\mu_{N}$ on $\mathbf{R}$
to consider the approximation of $V_{\mathcal{D}_{d}}^{\sigma_{a}}(x)$ given by
\begin{align}
V_{\mathcal{D}_{d}}^{\mu_{N}}(x)
=
\int_{-\alpha_{N}}^{\alpha_{N}} \log \left| \tanh \left( \frac{\pi}{4d} (x - z) \right) \right| \, \mathrm{d}\mu_{N}(z). 
\label{eq:minus_G_pot_approx}
\end{align}
The real number $\alpha_{N}$ in \eqref{eq:minus_G_pot_approx}
is an unknown that determines the support of the measure\footnote{
The support of a measure $\mu$ is defined by the collection of all points $z$ satisfying
\(
\mu(\{ y \mid |y - z| < r \}) \neq 0
\)
for all $r > 0$.
} 
$\mu_{N}$ as $\mathop{\mathrm{supp}} \mu_{N} = [-\alpha_{N}, \alpha_{N}]$. 
In order to impose a condition concerning the number of sampling points on $\mu_{N}$, 
we assume that 
\begin{align}
\mu_{N}([-\alpha_{N}, \alpha_{N}]) = \int_{-\alpha_{N}}^{\alpha_{N}} \mathrm{d}\mu_{N}(z) = 2(N+1). 
\label{eq:all_meas_b}
\end{align}
This value is different from $2N+1$ because this will provide a technical advantage
when we estimate the difference between $V_{\mathcal{D}_{d}}^{\mu_{N}}$ and 
$V_{\mathcal{D}_{d}}^{\sigma_{a}}$ in Lemma \ref{lem:GrPotDiscError_mod}. 
Now, we consider the following problem as an approximation of Problem \ref{prob:Original}. 
\begin{prob}
\label{prob:ContMeas}
Find a positive real number $\alpha_{N}$ and 
a measure $\mu_{N} \in \mathcal{M}(\mathbf{R}, 2(N+1))$ that attain
\begin{align}
\inf_{
\begin{subarray}{c}
\mu_{N} \in \mathcal{M}(\mathbf{R}, 2(N+1)) \\
\mathop{\mathrm{supp}} \mu_{N} = [-\alpha_{N}, \alpha_{N}] 
\end{subarray}
}
\left[
\sup_{x \in \mathbf{R}}
\left(
V_{\mathcal{D}_{d}}^{\mu_{N}}(x) + \log w(x)
\right)
\right]. 
\label{eq:main_opt_approx} 
\end{align}
\end{prob}
\noindent
If Problem~\ref{prob:ContMeas} has a solution $\mu_{N}$ with a density function $\nu_{N}$ 
such that $\d \mu_{N}(x) = \nu_{N}(x)\, \d x$, 
then we generate sampling points $a_{\ell}$ as
\begin{align}
a_{\ell} = (I[\nu_{N}])^{-1}(\ell)\quad (\ell = -N, -N+1, \ldots, N-1, N), 
\label{eq:gen_sampling_points}
\end{align}
where 
\begin{align}
I[\nu_{N}](x) = \int_{0}^{x}  \nu_{N}(t)\, \d t.
\notag
\end{align}
We expect that the sequence $\{ a_{\ell} \}$ will provide a good approximate solution of Problem~\ref{prob:Original}.

%--------------------
\subsection{Characterization of solutions of the problem \eqref{eq:main_opt_approx} using potential theory}

In this section, we characterize solutions $\mu_{N}$ of the problem \eqref{eq:main_opt_approx} 
using some fundamental facts relating to potential theory. 
The characterization of $\mu_{N}$ is provided by the following theorem. 

\begin{thm}
\label{thm:char_opt_nu_N}
Let $w$ be a weight function satisfying Assumptions \ref{assump:w}--\ref{assump:w_convex}. 
Then, the value $\alpha_{N}$ and the measure $\mu_{N} \in \mathcal{M}(\mathbf{R}, 2(N+1))$ are the solutions of the problem \eqref{eq:main_opt_approx} 
if $\mathop{\mathrm{supp}} \mu_{N} = [-\alpha_{N}, \alpha_{N}]$ and 
there exists a constant $K_{N}$ depending on $N$ such that
\begin{align}
& \label{eq:char_opt_eq} V_{\mathcal{D}_{d}}^{\mu_{N}}(x) + \log w(x) = -K_{N} \quad \text{for q.e. }x \in [-\alpha_{N}, \alpha_{N}], \\
& \label{eq:char_opt_ineq} V_{\mathcal{D}_{d}}^{\mu_{N}}(x) + \log w(x) \leq -K_{N} \quad \text{for any }x \in \mathbf{R} \setminus [-\alpha_{N}, \alpha_{N}]. 
\end{align}
\end{thm}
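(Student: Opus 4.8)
The plan is to rewrite the objective in \eqref{eq:main_opt_approx} in terms of the Green potential $U_{\mathcal{D}_{d}}^{\mu_{N}} = - V_{\mathcal{D}_{d}}^{\mu_{N}}$ and the external field $Q := -\log w$, so that minimizing $\sup_{x}(V_{\mathcal{D}_{d}}^{\mu_{N}} + \log w)$ is the same as maximizing $\inf_{x \in \mathbf{R}}(U_{\mathcal{D}_{d}}^{\mu_{N}}(x) + Q(x))$. First I would check that the hypotheses \eqref{eq:char_opt_eq}--\eqref{eq:char_opt_ineq} pin down the value attained by $\mu_{N}$: combining the q.e.\ equality on $[-\alpha_{N},\alpha_{N}]$ with the inequality on its complement gives $V_{\mathcal{D}_{d}}^{\mu_{N}} + \log w \le -K_{N}$ quasi-everywhere on $\mathbf{R}$, and since $U_{\mathcal{D}_{d}}^{\mu_{N}}$ is lower semicontinuous the function $V_{\mathcal{D}_{d}}^{\mu_{N}} + \log w$ is upper semicontinuous while the exceptional set has capacity zero and hence empty interior. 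The equality set is therefore dense in $[-\alpha_{N},\alpha_{N}]$, so upper semicontinuity upgrades the bound to $\sup_{x \in \mathbf{R}}(V_{\mathcal{D}_{d}}^{\mu_{N}}(x) + \log w(x)) = -K_{N}$, i.e.\ $\inf_{x}(U_{\mathcal{D}_{d}}^{\mu_{N}} + Q) = K_{N}$.

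Next, let $(\tilde{\alpha},\nu)$ be any competitor, with $\nu \in \mathcal{M}(\mathbf{R},2(N+1))$ and $\mathop{\mathrm{supp}}\nu = [-\tilde{\alpha},\tilde{\alpha}]$, and suppose toward a contradiction that its objective value is strictly smaller than that of $\mu_{N}$, that is, $c := \inf_{x \in \mathbf{R}}(U_{\mathcal{D}_{d}}^{\nu}(x) + Q(x)) > K_{N}$. Then $U_{\mathcal{D}_{d}}^{\nu} + Q \ge c$ on all of $\mathbf{R}$, while on $\mathop{\mathrm{supp}}\mu_{N}$ the equality \eqref{eq:char_opt_eq} gives $Q = K_{N} - U_{\mathcal{D}_{d}}^{\mu_{N}}$ q.e., hence $\mu_{N}$-almost everywhere because $\mu_{N}$ has finite Green energy and therefore charges no set of capacity zero. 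Subtracting yields $U_{\mathcal{D}_{d}}^{\nu} \ge U_{\mathcal{D}_{d}}^{\mu_{N}} + (c - K_{N})$ $\mu_{N}$-almost everywhere on $\mathop{\mathrm{supp}}\mu_{N}$, with the strictly positive gap $c - K_{N}$.

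The crux is to propagate this inequality from $\mathop{\mathrm{supp}}\mu_{N}$ to the whole line, and for this I would invoke the principle of domination for Green potentials: since $\mu_{N}$ has finite energy and the two measures share the total mass $2(N+1)$, the bound $U_{\mathcal{D}_{d}}^{\mu_{N}} \le U_{\mathcal{D}_{d}}^{\nu} - (c - K_{N})$ holding $\mu_{N}$-almost everywhere forces $U_{\mathcal{D}_{d}}^{\nu} \ge U_{\mathcal{D}_{d}}^{\mu_{N}} + (c - K_{N})$ for every $x \in \mathbf{R}$. Letting $|x| \to \infty$ then produces the contradiction: the Green function $g_{\mathcal{D}_{d}}(x,z) = -\log|\tanh(\frac{\pi}{4d}(x-z))|$ tends to $0$ as $|x| \to \infty$ for each fixed $z$, so both $U_{\mathcal{D}_{d}}^{\mu_{N}}(x)$ and $U_{\mathcal{D}_{d}}^{\nu}(x)$ vanish at infinity by dominated convergence (the supports are compact), and the limiting inequality reads $0 \ge c - K_{N} > 0$. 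Hence no competitor can improve on $\mu_{N}$, which shows that $(\alpha_{N},\mu_{N})$ solves \eqref{eq:main_opt_approx}.

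I expect the domination step to be the main obstacle. I must make sure the version of the principle I use is valid for Green potentials on the strip $\mathcal{D}_{d}$ (rather than for logarithmic potentials on $\mathbf{C}$ as stated in \cite{bib:SaffTotik_LogPotExtField_1997}), with the correct equal-mass hypothesis and allowing $\mu_{N}$ and $\nu$ to have different supports. The finiteness of the Green energy of $\mu_{N}$ (guaranteed once the hypotheses identify $\mu_{N}$ with the equilibrium measure of Theorem \ref{thm:UniqueEquiMeas}) and the uniform vanishing of the potentials at infinity are the technical points on which the whole argument turns.
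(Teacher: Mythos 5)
Your overall strategy---pin down $-K_{N}$ as the objective value of $\mu_{N}$, then rule out strictly better competitors by comparing potentials and letting $|x|\to\infty$---is in the spirit of the paper's Proposition~\ref{prop:pot_weight_gen_meas} (the Green analogue of Theorem I.3.1 of \cite{bib:SaffTotik_LogPotExtField_1997}), which the paper combines with Proposition~\ref{prop:optimal_meas} (Theorem II.5.12 there) instead of arguing from scratch. But two of your steps fail as written. First, the semicontinuity upgrade is backwards: $U_{\mathcal{D}_{d}}^{\mu_{N}}$ is \emph{lower} semicontinuous, so $f:=V_{\mathcal{D}_{d}}^{\mu_{N}}+\log w$ is \emph{upper} semicontinuous, and upper semicontinuity gives $f(x)\geq\limsup_{y\to x}f(y)$. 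Approaching an exceptional point $x$ through the dense equality set therefore yields $f(x)\geq -K_{N}$---the opposite of what you need. An usc function may exceed its q.e.\ bound precisely on a capacity-zero set, so your argument does not establish $\sup_{x\in\mathbf{R}}f(x)=-K_{N}$. This is exactly why the paper phrases Proposition~\ref{prop:pot_weight_gen_meas} in terms of the q.e.-supremum $``\sup\text{''}$ and runs the chain $\sup\geq ``\sup\text{''}\geq -K_{N}^{\ast}$, obtaining the value of the candidate from the identification $\mu_{N}=\mu_{N}^{\ast}$, $K_{N}=K_{N}^{\ast}$ of Proposition~\ref{prop:optimal_meas} rather than from a pointwise semicontinuity argument.

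Second, and fatally, the crux domination step is invalid. The principle of domination for Green potentials (Theorem II.5.8 in \cite{bib:SaffTotik_LogPotExtField_1997}) permits an additive constant only with $c\geq 0$ on the dominating side; you apply it with the constant $-(c-K_{N})<0$. That version is false, and necessarily so, for exactly the reason you then exploit: Green potentials of compactly supported measures vanish on $\partial\mathcal{D}_{d}$ and as $|x|\to\infty$ along $\mathbf{R}$, so an inequality with a strictly positive gap can never propagate to all of $\mathbf{R}$. Your contradiction $0\geq c-K_{N}$ therefore refutes the lemma you invoked, not the assumed existence of a better competitor; to complete the reductio you would have to prove directly that the $\mu_{N}$-a.e.\ inequality with positive gap is unsatisfiable, which is essentially the content of Proposition~\ref{prop:pot_weight_gen_meas} itself and does not follow from any equal-mass domination statement. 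Note also that on the strip equal total mass is not the relevant invariant at infinity: since $-\log\tanh u\sim 2\e^{-2u}$, one has $U_{\mathcal{D}_{d}}^{\mu}(x)\sim 2\e^{-\pi x/(2d)}\int \e^{\pi z/(2d)}\,\d\mu(z)$ as $x\to+\infty$, so the asymptotics are governed by exponential moments, not mass; and an energy-positivity computation shows your a.e.\ hypothesis merely forces $I(\nu)\geq I(\mu_{N})+4(N+1)(c-K_{N})$, no contradiction. A further small circularity: you take the finite Green energy of $\mu_{N}$ from its identification with the equilibrium measure, but that identification (Proposition~\ref{prop:optimal_meas}) assumes finite energy among its hypotheses. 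The paper's route---Proposition~\ref{prop:optimal_meas} to pin down $(\mu_{N},K_{N})=(\mu_{N}^{\ast},K_{N}^{\ast})$, then Proposition~\ref{prop:pot_weight_gen_meas} to bound every compactly supported competitor from below---avoids all three issues.
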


In order to prove this theorem,  we apply some fundamental facts from potential theory that are described below. 
The study
%{\cite{bib:SaffTotik_LogPotExtField_1997}}, 
{\cite{bib:SaffTotik_LogPotExtField_1997}} presents 
the original forms of these facts in a more general setting, i.e., 
in terms of more general $G$, $E$ and $w$ introduced in Section \ref{sec:prelim_pot_th}. 
However, for simplicity
we concern ourselves only with the case that $G=\mathcal{D}_{d}, E = \mathbf{R}$ 
and $w$ satisfy Assumptions \ref{assump:w}--\ref{assump:w_convex}. 
Furthermore, 
we only use $V_{\mathcal{D}_{d}}^{\mu_{N}}$, 
rather than
$U_{\mathcal{D}_{d}}^{\mu_{N}}\ (=-V_{\mathcal{D}_{d}}^{\mu_{N}})$. 

First, we consider the integral
\begin{align}
J_{w}^{\mathcal{D}_{d}}(\mu_{N}) 
& = \int_{-\alpha_{N}}^{\alpha_{N}} (V_{\mathcal{D}_{d}}^{\mu_{N}}(x) + 2 \log w(x))\, \mathrm{d}\mu_{N}(x) \notag \\
& = \int_{-\alpha_{N}}^{\alpha_{N}} \int_{-\alpha_{N}}^{\alpha_{N}} \log \left| \tanh \left( \frac{\pi}{4d} (x - z) \right) \right|\, \mathrm{d}\mu_{N}(z) \mathrm{d}\mu_{N}(x)
+ 2 \int_{-\alpha_{N}}^{\alpha_{N}} \log w(x)\, \mathrm{d}\mu_{N}(x). 
\end{align} 
Note that $J_{w}^{\mathcal{D}_{d}}(\mu_{N}) = -I_{w}^{\mathcal{D}_{d}}(\mu_{N})$, 
where $I_{w}^{\mathcal{D}_{d}}(\mu_{N})$ is the weighted Green energy given in \eqref{eq:Green_energy}. 
Then, according to Theorem \ref{thm:UniqueEquiMeas}, 
there is a unique maximizer $\mu_{N}^{\ast} \in \mathcal{M}(\mathbf{R}, 2(N+1))$ of $J_{w}^{\mathcal{D}_{d}}$. 
Supposing $\mathop{\mathrm{supp}} \mu_{N}^{\ast} \in [-\alpha_{N}^{\ast}, \alpha_{N}^{\ast}]$, 
we define a constant $K_{N}^{\ast}$ depending on $N$ by
\begin{align}
K_{N}^{\ast} = J_{w}^{\mathcal{D}_{d}}(\mu_{N}^{\ast}) - \int_{-\alpha_{N}^{\ast}}^{\alpha_{N}^{\ast}} \log w(x)\, \mathrm{d}x. 
\end{align}
Then, we obtain the following characterization of the optimal measure $\mu_{N}^{\ast}$. 

\begin{prop}
\label{prop:optimal_meas}
Let $w$ be a weight function satisfying Assumptions \ref{assump:w}--\ref{assump:w_convex}. 
Furthermore, suppose that $\mu_{N} \in \mathcal{M}(\mathbf{R}, 2(N+1))$ has compact support and a finite Green energy. 
If there exists a constant $K_{N}$ such that 
\begin{align}
&V_{\mathcal{D}_{d}}^{\mu_{N}}(x) + \log w(x) = -K_{N} \quad \text{for q.e. } x \in \mathop{\mathrm{supp}} \mu_{N}, \label{eq:opt_meas_1} \\
&V_{\mathcal{D}_{d}}^{\mu_{N}}(x) + \log w(x) \leq -K_{N} \quad \text{for any } x \in \mathbf{R}, \label{eq:opt_meas_2}
\end{align}
then we have that $\mu_{N} = \mu_{N}^{\ast}$ and $K_{N} = K_{N}^{\ast}$. 
\end{prop}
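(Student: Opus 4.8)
The plan is to show that the two variational conditions \eqref{eq:opt_meas_1}--\eqref{eq:opt_meas_2} force $\mu_N$ to be the unique minimizer of the weighted Green energy $I_w^{\mathcal{D}_d}$ over $\mathcal{M}(\mathbf{R}, 2(N+1))$. Since $J_w^{\mathcal{D}_d} = -I_w^{\mathcal{D}_d}$ and, by Theorem~\ref{thm:UniqueEquiMeas}, the maximizer $\mu_N^{\ast}$ of $J_w^{\mathcal{D}_d}$ (equivalently, the minimizer of $I_w^{\mathcal{D}_d}$) exists and is unique, it suffices to prove that $I_w^{\mathcal{D}_d}(\nu) \geq I_w^{\mathcal{D}_d}(\mu_N)$ for every $\nu \in \mathcal{M}(\mathbf{R}, 2(N+1))$; this yields $\mu_N = \mu_N^{\ast}$ at once, and the constant $K_N$ is then pinned down separately.

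For the energy comparison, I would first reduce to $\nu$ of finite Green energy with $\int \log w\,\d\nu$ finite, since otherwise $I_w^{\mathcal{D}_d}(\nu) = +\infty$ (here $w$ is bounded above by Assumption~\ref{assump:w}, so the external-field term is bounded below) and the inequality is trivial. For such $\nu$, set $\sigma := \nu - \mu_N$, a signed measure of total mass zero and finite energy. Using $U_{\mathcal{D}_d}^{\mu_N} = -V_{\mathcal{D}_d}^{\mu_N}$, the definition \eqref{eq:Green_energy} of $I_w^{\mathcal{D}_d}$, and the polarization identity $\langle \nu,\nu\rangle - \langle \mu_N,\mu_N\rangle = \langle \sigma,\sigma\rangle + 2\langle \sigma,\mu_N\rangle$ for the Green bilinear form, I obtain
\begin{align}
I_w^{\mathcal{D}_d}(\nu) - I_w^{\mathcal{D}_d}(\mu_N)
= \iint g_{\mathcal{D}_d}\,\d\sigma\,\d\sigma
- 2\int_{\mathbf{R}}\bigl(V_{\mathcal{D}_d}^{\mu_N}(x) + \log w(x)\bigr)\,\d\sigma(x).
\notag
\end{align}
The first term is the Green energy of $\sigma$, which is nonnegative by the energy principle for the Green kernel. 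For the second term I would invoke the variational conditions: because $\mu_N$ has finite Green energy it charges no set of zero capacity, so the equality \eqref{eq:opt_meas_1} holds $\mu_N$-almost everywhere and gives $\int(V_{\mathcal{D}_d}^{\mu_N}+\log w)\,\d\mu_N = -2(N+1)K_N$, while the pointwise inequality \eqref{eq:opt_meas_2} gives $\int(V_{\mathcal{D}_d}^{\mu_N}+\log w)\,\d\nu \leq -2(N+1)K_N$. Subtracting yields $\int(V_{\mathcal{D}_d}^{\mu_N}+\log w)\,\d\sigma \leq 0$, so the whole second term is nonnegative. Hence $I_w^{\mathcal{D}_d}(\nu) \geq I_w^{\mathcal{D}_d}(\mu_N)$, and therefore $\mu_N = \mu_N^{\ast}$.

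The two facts I rely on are standard but deserve care, and I expect the second to be the main obstacle. First, the nonnegativity $\iint g_{\mathcal{D}_d}\,\d\sigma\,\d\sigma \geq 0$ for a finite-energy signed measure is the energy principle, valid because $g_{\mathcal{D}_d}$ is the kernel of a positive operator; I would cite the corresponding statement in \cite{bib:SaffTotik_LogPotExtField_1997}. Second, the passage from ``q.e. on $\operatorname{supp}\mu_N$'' in \eqref{eq:opt_meas_1} to ``$\mu_N$-a.e.'' rests on the fact that a measure of finite Green energy assigns zero mass to every polar set; this is exactly where the finite-energy hypothesis is indispensable, and it is the step I would justify most carefully, together with the Fubini/finiteness arguments needed to split the double integrals and to manipulate $\sigma = \nu - \mu_N$.

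Finally, to obtain $K_N = K_N^{\ast}$, once $\mu_N = \mu_N^{\ast}$ is established I would integrate the equality \eqref{eq:opt_meas_1} against $\mu_N$ and use the total mass $2(N+1)$ together with $J_w^{\mathcal{D}_d}(\mu_N) = \int(V_{\mathcal{D}_d}^{\mu_N}+2\log w)\,\d\mu_N$; this expresses $K_N$ explicitly in terms of $\mu_N^{\ast}$, and comparison with the definition of $K_N^{\ast}$ gives the asserted identity.
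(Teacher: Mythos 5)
Your argument is correct, but it takes a genuinely different route from the paper. The paper's proof of Proposition~\ref{prop:optimal_meas} is a two-line reduction: dividing \eqref{eq:opt_meas_1}--\eqref{eq:opt_meas_2} by $M=2(N+1)$ turns the hypotheses into the Frostman-type equilibrium conditions for the unit-mass problem with weight $w^{1/M}$, and the conclusion is delegated to Theorem II.5.12 of \cite{bib:SaffTotik_LogPotExtField_1997}, together with the observation (already made for Theorem~\ref{thm:UniqueEquiMeas}) that the admissibility hypothesis there may be replaced by Assumptions~\ref{assump:w}--\ref{assump:w_convex}. What you have written is essentially a self-contained re-proof of that cited theorem: the decomposition $I_{w}^{\mathcal{D}_{d}}(\nu)-I_{w}^{\mathcal{D}_{d}}(\mu_{N})=\iint g_{\mathcal{D}_{d}}\,\d\sigma\,\d\sigma-2\int_{\mathbf{R}}\bigl(V_{\mathcal{D}_{d}}^{\mu_{N}}+\log w\bigr)\,\d\sigma$ with $\sigma=\nu-\mu_{N}$, the energy principle (positive definiteness of the Green kernel, which for $\mathcal{D}_{d}$ can be cited from \cite{bib:SaffTotik_LogPotExtField_1997} or obtained by conformal transplantation to the disk), and the fact that a compactly supported measure of finite Green energy charges no set of zero capacity (the Green kernel differs from the logarithmic kernel by a function bounded on compacts, so this reduces to the classical logarithmic statement) are exactly the ingredients of the Saff--Totik proof, and you correctly identify the q.e.-to-$\mu_{N}$-a.e.\ passage and the Cauchy--Schwarz/Fubini finiteness checks as the delicate points. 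Your version buys transparency --- it shows precisely where finite energy, compact support, and the asymmetry between the q.e.\ equality \eqref{eq:opt_meas_1} and the everywhere inequality \eqref{eq:opt_meas_2} enter --- while the paper's version buys brevity at the cost of asking the reader to verify that the weakened-admissibility argument transfers to II.5.12, which it does since admissibility is needed there only for the same finiteness purposes. One caveat on your final step: integrating \eqref{eq:opt_meas_1} against $\mu_{N}$ yields $K_{N}=\tfrac{1}{M}\bigl(\int\log w\,\d\mu_{N}^{\ast}-J_{w}^{\mathcal{D}_{d}}(\mu_{N}^{\ast})\bigr)$, which is the intended value of $K_{N}^{\ast}$; the paper's displayed definition of $K_{N}^{\ast}$ (with $\d x$ in place of $\d\mu_{N}^{\ast}(x)/M$ and the opposite sign) appears to contain a typo, so your ``comparison with the definition'' should be read against the corrected formula --- equivalently, once $\mu_{N}=\mu_{N}^{\ast}$ is established, the constant is forced, since the left-hand side of \eqref{eq:opt_meas_1} then depends only on $\mu_{N}^{\ast}$ and the equality holds q.e.\ on a set of positive capacity.
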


\begin{proof}
By dividing both sides of \eqref{eq:opt_meas_1} and \eqref{eq:opt_meas_2} by $M=2(N+1)$, 
we can reduce this theorem to 
%{\cite[Theorem II.5.12]{bib:SaffTotik_LogPotExtField_1997}} 
{Theorem II.5.12 in \cite{bib:SaffTotik_LogPotExtField_1997}}, 
provided that the weight $w^{1/M}$ is admissible. 
However, because the admissibility is only necessary for the same purpose as in 
Theorem \ref{thm:UniqueEquiMeas}, 
Assumptions \ref{assump:w}--\ref{assump:w_convex} on $w$ are sufficient for proving this theorem
as shown in the proof of Theorem \ref{thm:UniqueEquiMeas}. 
Thus we can obtain the conclusion by referring to 
%{\cite[Theorem II.5.12]{bib:SaffTotik_LogPotExtField_1997}}. 
{Theorem II.5.12 in \cite{bib:SaffTotik_LogPotExtField_1997}}. 
\end{proof}

\vskip5pt

\noindent
In addition to Proposition \ref{prop:optimal_meas}, 
we require the following proposition to demonstrate that the maximizer $\mu_{N}^{\ast}$ of $J$ 
is also a solution of the optimization problem \eqref{eq:main_opt_approx}. 
Following \cite{bib:SaffTotik_LogPotExtField_1997}, 
for a real function $h$ on $\mathbf{R}$, let $``\sup_{x \in \mathbf{R}}\text{''}\, h(x)$ 
denote the smallest number $U$ such that $h$ takes values larger than $U$ only on a set of zero capacity.  

\begin{prop}
\label{prop:pot_weight_gen_meas}
Let $w$ be a weight function satisfying Assumptions \ref{assump:w}--\ref{assump:w_convex}. 
Then, for any $\mu_{N} \in \mathcal{M}(\mathbf{R}, 2(N+1))$ with compact support, we have
\begin{align}
``\sup_{x \in \mathbf{R}}\text{''} \left( V_{\mathcal{D}_{d}}^{\mu_{N}}(x) + \log w(x) \right) & \geq -K_{N}^{\ast}, \\
\inf_{x \in \mathop{\mathrm{supp}} \mu_{N}} \left( V_{\mathcal{D}_{d}}^{\mu_{N}}(x) + \log w(x) \right) & \leq -K_{N}^{\ast}.
\end{align}
\end{prop}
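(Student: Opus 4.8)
The plan is to recognize these two bounds as the companion inequalities to the Frostman-type characterization of the weighted Green equilibrium measure $\mu_{N}^{\ast}$, and to prove both by the same device: reduce each statement, on a suitable set, to a pointwise comparison between the \emph{pure} Green potentials $U_{\mathcal{D}_{d}}^{\mu_{N}}=-V_{\mathcal{D}_{d}}^{\mu_{N}}$ and $U_{\mathcal{D}_{d}}^{\mu_{N}^{\ast}}=-V_{\mathcal{D}_{d}}^{\mu_{N}^{\ast}}$ (with the external field $\log w$ eliminated), globalize the comparison by the principle of domination, and then read off a contradiction from the behaviour at infinity.

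First I would record two ingredients. By Theorem \ref{thm:UniqueEquiMeas} the equilibrium measure $\mu_{N}^{\ast}$ exists, has compact support and finite Green energy, and --- this being the Frostman characterization underlying Proposition \ref{prop:optimal_meas} --- it satisfies $V_{\mathcal{D}_{d}}^{\mu_{N}^{\ast}}+\log w=-K_{N}^{\ast}$ q.e.\ on $\mathop{\mathrm{supp}}\mu_{N}^{\ast}$ and $V_{\mathcal{D}_{d}}^{\mu_{N}^{\ast}}+\log w\le -K_{N}^{\ast}$ q.e.\ on $\mathbf{R}$. Since $\mu_{N}^{\ast}$ has finite energy it charges no set of zero capacity, so these q.e.\ statements hold $\mu_{N}^{\ast}$-a.e. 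The second ingredient is that every Green potential of a compactly supported finite measure vanishes at infinity along $\mathbf{R}$: because $g_{\mathcal{D}_{d}}(x,z)=-\log|\tanh((\pi/4d)(x-z))|\to 0$ as $|x|\to\infty$ for fixed $z$, dominated convergence gives $U_{\mathcal{D}_{d}}^{\mu_{N}}(x)\to 0$ and $U_{\mathcal{D}_{d}}^{\mu_{N}^{\ast}}(x)\to 0$ as $|x|\to\infty$.

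For the first inequality I would argue by contradiction. Suppose $V_{\mathcal{D}_{d}}^{\mu_{N}}+\log w\le -K_{N}^{\ast}-\varepsilon$ q.e.\ on $\mathbf{R}$ for some $\varepsilon>0$. Restricting to $\mathop{\mathrm{supp}}\mu_{N}^{\ast}$ and subtracting the Frostman equality, the field cancels and one obtains $U_{\mathcal{D}_{d}}^{\mu_{N}^{\ast}}\le U_{\mathcal{D}_{d}}^{\mu_{N}}-\varepsilon$ $\mu_{N}^{\ast}$-a.e. As $\mu_{N}^{\ast}$ has finite energy and $\mu_{N}^{\ast}(\mathbf{R})=\mu_{N}(\mathbf{R})=2(N+1)$, the principle of domination (in its Green-potential form on $\mathcal{D}_{d}$; see \cite{bib:SaffTotik_LogPotExtField_1997}) upgrades this to $U_{\mathcal{D}_{d}}^{\mu_{N}^{\ast}}\le U_{\mathcal{D}_{d}}^{\mu_{N}}-\varepsilon$ on all of $\mathbf{R}$, whence letting $|x|\to\infty$ gives $0\le-\varepsilon$, a contradiction. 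The second inequality is symmetric: if $V_{\mathcal{D}_{d}}^{\mu_{N}}+\log w\ge -K_{N}^{\ast}+\varepsilon$ on $\mathop{\mathrm{supp}}\mu_{N}$, then combining with $V_{\mathcal{D}_{d}}^{\mu_{N}^{\ast}}+\log w\le -K_{N}^{\ast}$ yields $U_{\mathcal{D}_{d}}^{\mu_{N}}\le U_{\mathcal{D}_{d}}^{\mu_{N}^{\ast}}-\varepsilon$ $\mu_{N}$-a.e.; domination (now with $\mu_{N}$ the finite-energy measure) extends this to $\mathbf{R}$, and infinity again forces $0\le-\varepsilon$. The finite-energy hypothesis needed here is harmless: if $\mu_{N}$ has infinite Green energy then $V_{\mathcal{D}_{d}}^{\mu_{N}}=-\infty$ on a set of positive $\mu_{N}$-measure, so $\inf_{\mathop{\mathrm{supp}}\mu_{N}}(V_{\mathcal{D}_{d}}^{\mu_{N}}+\log w)=-\infty$ and the inequality is trivial.

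The step I expect to be the crux is the cancellation of the external field. A direct attempt to contradict the hypotheses by integrating $V_{\mathcal{D}_{d}}^{\mu_{N}}+\log w$ against either $\mu_{N}$ or $\mu_{N}^{\ast}$ always leaves a residual term $\int\log w\,\mathrm{d}(\mu_{N}^{\ast}-\mu_{N})$ of indefinite sign, because $\mu_{N}$ and $\mu_{N}^{\ast}$ need not have comparable supports. The device above circumvents this by comparing the two potentials only on the set where $V_{\mathcal{D}_{d}}^{\mu_{N}^{\ast}}+\log w$ is pinned to the constant $-K_{N}^{\ast}$ (so that $\log w$ drops out, leaving a clean inequality between Green potentials) and then transporting that inequality globally by domination rather than by integration. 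The remaining care is to verify that the principle of domination applies to Green potentials on $\mathcal{D}_{d}$ --- which it does, e.g.\ by transplanting to the logarithmic setting through a conformal map of $\mathcal{D}_{d}$ onto a disk --- and to check the mass condition $\mu_{N}^{\ast}(\mathbf{R})=\mu_{N}(\mathbf{R})$ it requires, which holds by construction.
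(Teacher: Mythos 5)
There is a genuine gap, and it sits at the crux of your argument: the principle of domination does not do what you ask of it. In the Green-potential setting the Maria--Frostman domination principle transports an inequality $U_{\mathcal{D}_{d}}^{\mu}\le v$ from $\mu$-a.e.\ validity to global validity only when the majorant $v$ is a \emph{positive} superharmonic function (for instance $U_{\mathcal{D}_{d}}^{\nu}+c$ with $c\ge 0$); you invoke it with $v=U_{\mathcal{D}_{d}}^{\mu_{N}}-\varepsilon$, $\varepsilon>0$, and in that form the principle is simply false. Indeed, the hypothesis $U_{\mathcal{D}_{d}}^{\mu_{N}^{\ast}}\le U_{\mathcal{D}_{d}}^{\mu_{N}}-\varepsilon$ $\mu_{N}^{\ast}$-a.e.\ is easily satisfiable (concentrate $\mu_{N}$ near $\mathop{\mathrm{supp}}\mu_{N}^{\ast}$ so that its potential is large there), while the asserted global conclusion can \emph{never} hold, because --- as you yourself observe --- both Green potentials of compactly supported measures vanish at $\partial\mathcal{D}_{d}$ and as $|x|\to\infty$. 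So your terminal ``contradiction'' $0\le-\varepsilon$ is a contradiction with the misapplied lemma, not with the assumed negation; the implication from the $\mu_{N}^{\ast}$-a.e.\ inequality to the global one is unjustified, and the same defect invalidates your argument for the second inequality. The conformal-transplant remark does not rescue it: mapping $\mathcal{D}_{d}$ onto a disk converts Green potentials of $\mathcal{D}_{d}$ into Green potentials of the disk (still vanishing on the boundary), \emph{not} into logarithmic potentials on $\mathbf{C}$, where Saff--Totik's Theorem II.3.2 does allow an additive constant of either sign under the mass condition $\nu(\mathbf{C})\le\mu(\mathbf{C})$ --- that mass condition is precisely the surrogate for behaviour at infinity, and in a Greenian domain the boundary vanishing forbids negative constants. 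This is why the paper does not argue this way: it normalizes the total mass to one and reduces the statement to Theorem I.3.1 of Saff--Totik (with admissibility replaced by Assumptions 1--3), i.e.\ it inherits that theorem's proof mechanism rather than a bare domination argument. (Incidentally, the Frostman conditions you use for $\mu_{N}^{\ast}$ come from Theorem II.5.10 of Saff--Totik, not from Proposition \ref{prop:optimal_meas}, which is the sufficiency direction; the necessity is available, so this is only a citation slip.)

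A secondary flaw: your fallback for measures $\mu_{N}$ of infinite Green energy --- ``then $V_{\mathcal{D}_{d}}^{\mu_{N}}=-\infty$ on a set of positive $\mu_{N}$-measure'' --- is false as stated. There exist compactly supported finite measures whose potential is finite at every point yet whose energy is infinite (e.g.\ $\sum_{k}c_{k}\nu_{k}$ with $\nu_{k}$ the equilibrium measures of shrinking separated intervals, $c_{k}$ chosen so that $c_{k}V_{k}<\infty$ for each $k$ but $\sum_{k}c_{k}^{2}V_{k}=\infty$); moreover, your combined inequality on $\mathop{\mathrm{supp}}\mu_{N}$ holds only quasi everywhere, and an infinite-energy $\mu_{N}$ may charge the polar exceptional set, so the q.e.-to-$\mu_{N}$-a.e.\ upgrade also breaks. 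The triviality you want \emph{is} salvageable by a different observation: since $g_{\mathcal{D}_{d}}\ge 0$, boundedness of $U_{\mathcal{D}_{d}}^{\mu_{N}}$ on $\mathop{\mathrm{supp}}\mu_{N}$ would force $I\le\bigl(\sup_{\mathop{\mathrm{supp}}\mu_{N}}U_{\mathcal{D}_{d}}^{\mu_{N}}\bigr)\mu_{N}(\mathbf{R})<\infty$, so infinite energy makes the potential unbounded above on the support and the infimum in the second inequality equals $-\infty$. But this repairs only the side issue; the domination step remains the fatal one, and fixing the proof requires a genuinely different mechanism, such as the one behind Saff--Totik's Theorem I.3.1 that the paper invokes.
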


\begin{proof}
This theorem is an analogue of 
%{\cite[Theorem I.3.1]{bib:SaffTotik_LogPotExtField_1997}}, 
{Theorem I.3.1 in \cite{bib:SaffTotik_LogPotExtField_1997}}, 
which states an analogous fact for the case of logarithmic potentials. 
By dividing $\mu_{N} \in \mathcal{M}(\mathbf{R}, 2(N+1))$ by $M = 2(N+1)$ and 
replacing the admissibility assumption in 
%{\cite[Theorem I.3.1]{bib:SaffTotik_LogPotExtField_1997}} 
{Theorem I.3.1 in \cite{bib:SaffTotik_LogPotExtField_1997}} 
by Assumptions \ref{assump:w}--\ref{assump:w_convex}, 
we can prove this theorem in almost the same manner as 
%{\cite[Theorem I.3.1]{bib:SaffTotik_LogPotExtField_1997}}. 
{Theorem I.3.1 in \cite{bib:SaffTotik_LogPotExtField_1997}}. 
\end{proof}

\vskip5pt

\noindent
By combining Propositions \ref{prop:optimal_meas} and \ref{prop:pot_weight_gen_meas}, 
we can prove Theorem \ref{thm:char_opt_nu_N}. 

\vskip5pt

\noindent
\renewcommand{\proof}{{\it Proof of Theorem \ref{thm:char_opt_nu_N}}.}
\begin{proof}
Suppose that the conditions \eqref{eq:char_opt_eq} and \eqref{eq:char_opt_ineq} are satisfied. 
Then, it follows from Proposition \ref{prop:optimal_meas} that $\mu_{N} = \mu_{N}^{\ast}$, $K_{N} = K_{N}^{\ast}$, and
\begin{align}
\sup_{x \in \mathbf{R}} \left( V_{\mathcal{D}_{d}}^{\mu_{N}^{\ast}}(x) + \log w(x) \right) = -K_{N}^{\ast}. 
\end{align}
Furthermore, let $\mu_{N} \in \mathcal{M}(\mathbf{R}, 2(N+1))$ be a measure with compact support. 
According to Proposition \ref{prop:pot_weight_gen_meas}, we have that
\begin{align}
&\sup_{x \in \mathbf{R}} \left( V_{\mathcal{D}_{d}}^{\mu_{N}}(x) + \log w(x) \right) 
\geq 
``\sup_{x \in \mathbf{R}}\text{''} \left( V_{\mathcal{D}_{d}}^{\mu_{N}}(x) + \log w(x) \right) \notag \\
& \geq -K_{N}^{\ast} 
= \sup_{x \in \mathbf{R}} \left( V_{\mathcal{D}_{d}}^{\mu_{N}^{\ast}}(x) + \log w(x) \right). 
\end{align}
Then, $\mu_{N}^{\ast}$ is a solution of the optimization problem \eqref{eq:main_opt_approx}.
\end{proof}
\renewcommand{\proof}{{\it Proof}.} 
%\renewcommand{\proofname}{{\it Proof}.}

%------------------------------
\section{Procedure for designing accurate formulas}
\label{sec:Proc}

In order to generate sampling points $a_{\ell}$ using Theorem \ref{thm:char_opt_nu_N}, 
we need to obtain the solution $\mu_{N}^{\ast}$ or its approximation of the optimality condition 
given by the integral equation \eqref{eq:char_opt_eq} and the integral inequality \eqref{eq:char_opt_ineq} 
with unknown parameters $\alpha_{N}$ and $K_{N}$. 

In order to achieve analytical tractability, 
we seek an approximation of $\mu_{N}^{\ast}$ in 
the set of the measures in $\mathcal{M}(\mathbf{R}, 2(N+1))$
with continuously differentiable density functions. 
For this purpose, we define 
\begin{align}
& \mathcal{C}_{\mathrm{d}}^{1}(\mathbf{R}, 2(N+1)) \notag \\
& =
\left\{
\nu_{N}: \mathbf{R} \to \mathbf{R}_{\geq 0} 
\left| \, 
\nu_{N} \text{ is continuously differentiable on } \mathop{\mathrm{supp}} \nu_{N}, \ 
\int_{-\infty}^{\infty} \nu_{N}(x) \, \d x = 2(N+1)
\right.
\right\}
\label{eq:CondDiffDens}
\end{align}
and let $\mu[\nu_{N}]$ be the measure such that 
$\d \mu[\nu_{N}](x) = \nu_{N}(x)\, \d x$ for $\nu_{N} \in \mathcal{C}_{\mathrm{d}}^{1}(\mathbf{R}, 2(N+1))$. 
Then we seek an approximation of $\mu_{N}^{\ast}$ in the set 
$\{ \mu[\nu_{N}] \mid \nu_{N} \in \mathcal{C}_{\mathrm{d}}^{1}(\mathbf{R}, 2(N+1)) \}$, 
which is a subset of $\mathcal{M}(\mathbf{R}, 2(N+1))$.  
By using some fundamental properties of singular integrals, 
we can show that the following smoothness property of $V_{\mathcal{D}_{d}}^{\mu[\nu_{N}]}$ holds. 
The proof is presented in Section \ref{sec:proof_smooth_V}. 

\begin{prop}
\label{prop:smooth_V}
Let $\nu_{N} \in \mathcal{C}_{\mathrm{d}}^{1}(\mathbf{R}, 2(N+1))$ be a density function 
with $\mathop{\mathrm{supp}} \nu_{N} = [-\alpha_{N}, \alpha_{N}]$ and $\nu_{N}(\pm \alpha_{N}) = 0$. 
Then, the function $V_{\mathcal{D}_{d}}^{\mu[\nu_{N}]}$ given by \eqref{eq:minus_G_pot_approx} 
is differentiable on $\mathbf{R} \setminus \{ \pm \alpha_{N} \}$
and its derivative $(V_{\mathcal{D}_{d}}^{\mu[\nu_{N}]})'$ satisfies
\begin{align}
(V_{\mathcal{D}_{d}}^{\mu[\nu_{N}]})'(\pm \alpha_{N} - 0) = (V_{\mathcal{D}_{d}}^{\mu[\nu_{N}]})'(\pm \alpha_{N} + 0). 
\label{eq:smooth_V}
\end{align}
\end{prop}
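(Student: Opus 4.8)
The plan is to trade the strongly singular form of the derivative for a weakly (logarithmically) singular integral by a single integration by parts, exploiting the hypothesis $\nu_{N}(\pm\alpha_{N})=0$, and then to read off both the differentiability on $\mathbf{R}\setminus\{\pm\alpha_{N}\}$ and the matching of the one-sided derivatives at the endpoints from the continuity of that weakly singular integral. Write $\kappa=\pi/(4d)$ and $g(t)=\log|\tanh(\kappa t)|$, so that $V_{\mathcal{D}_{d}}^{\mu[\nu_{N}]}(x)=\int_{-\alpha_{N}}^{\alpha_{N}} g(x-z)\,\nu_{N}(z)\,\d z$. First I would record the elementary facts that $g$ is smooth on $\mathbf{R}\setminus\{0\}$, that $g(t)=\log|\kappa t|+\mathrm{O}(t^{2})$ as $t\to 0$ so that $g$ is integrable across its singularity, and that $g'(t)=2\kappa/\sinh(2\kappa t)$ behaves like $1/t$ near $0$. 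In particular $g\in L^{1}_{\mathrm{loc}}(\mathbf{R})$, so the antiderivative $G(t):=\int_{0}^{t} g(s)\,\d s$ is well defined and absolutely continuous on compact intervals with $G'=g$ almost everywhere.

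Next I would integrate by parts in $z$. Since $G(x-\,\cdot\,)$ is absolutely continuous and $\nu_{N}\in C^{1}$, the product rule for absolutely continuous functions gives
\[
V_{\mathcal{D}_{d}}^{\mu[\nu_{N}]}(x) = \bigl[-G(x-z)\,\nu_{N}(z)\bigr]_{z=-\alpha_{N}}^{z=\alpha_{N}} + \int_{-\alpha_{N}}^{\alpha_{N}} G(x-z)\,\nu_{N}'(z)\,\d z = \int_{-\alpha_{N}}^{\alpha_{N}} G(x-z)\,\nu_{N}'(z)\,\d z,
\]
where the boundary term vanishes \emph{precisely} because $\nu_{N}(\pm\alpha_{N})=0$. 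This is the crucial simplification: the kernel $G$ is continuous (indeed bounded on the relevant range), whereas the original derivative kernel $g'$ would only have been conditionally integrable and would have forced a principal-value analysis. It is also the structural reason why the vanishing of the density at the endpoints is exactly what prevents a jump of the derivative across $\partial\,\mathop{\mathrm{supp}}\mu[\nu_{N}]$.

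Then I would introduce $W(x):=\int_{-\alpha_{N}}^{\alpha_{N}} g(x-z)\,\nu_{N}'(z)\,\d z$ and establish two facts. First, $W$ is continuous on all of $\mathbf{R}$: since $\nu_{N}'$ is continuous and bounded on $[-\alpha_{N},\alpha_{N}]$ and $g$ carries only an integrable logarithmic singularity, I would split the integral into a small neighborhood of $z=x$, on which the contribution is bounded by $\sup|\nu_{N}'|\cdot\int_{|x-z|<\delta}|g(x-z)|\,\d z$ uniformly for $x$ in a bounded set, and the complementary part, on which $g(x-\,\cdot\,)$ is continuous and bounded so that dominated convergence applies; this holds without exception at the endpoints $x=\pm\alpha_{N}$. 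Second, $V_{\mathcal{D}_{d}}^{\mu[\nu_{N}]}$ is differentiable with derivative $W$: using $\int_{x_{0}}^{x} g(s-z)\,\d s = G(x-z)-G(x_{0}-z)$ together with Fubini's theorem (legitimate because $g$ is locally integrable and $\nu_{N}'$ bounded over a compact region), I would obtain $\int_{x_{0}}^{x} W(s)\,\d s = V_{\mathcal{D}_{d}}^{\mu[\nu_{N}]}(x)-V_{\mathcal{D}_{d}}^{\mu[\nu_{N}]}(x_{0})$, whence the fundamental theorem of calculus, applied to the continuous function $W$, yields $(V_{\mathcal{D}_{d}}^{\mu[\nu_{N}]})'=W$ on all of $\mathbf{R}$. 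Thus $V_{\mathcal{D}_{d}}^{\mu[\nu_{N}]}\in C^{1}(\mathbf{R})$; in particular it is differentiable on $\mathbf{R}\setminus\{\pm\alpha_{N}\}$ and $(V_{\mathcal{D}_{d}}^{\mu[\nu_{N}]})'(\pm\alpha_{N}-0)=W(\pm\alpha_{N})=(V_{\mathcal{D}_{d}}^{\mu[\nu_{N}]})'(\pm\alpha_{N}+0)$, which is the assertion (and slightly stronger than stated).

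I expect the main obstacle to be the continuity of $W$ at points where the logarithmic singularity is active, and in particular the uniformity of the near-diagonal estimate as $x$ passes through an endpoint $\pm\alpha_{N}$. Once the integration by parts has replaced $g'$ by the integrable kernel $g$, this reduces to the classical continuity of weakly singular potentials with continuous density, but the estimates must be carried out with the genuine kernel $\log|\tanh(\kappa t)|$ rather than merely its leading logarithmic part, which is where the fundamental properties of singular integrals are invoked.
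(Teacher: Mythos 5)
Your proof is correct, and it takes a genuinely different route from the paper's. The paper splits into cases: for $|x|>\alpha_{N}$ it invokes a standard differentiation-under-the-integral argument, and for $|x|<\alpha_{N}$ it introduces the truncated potential $V_{\varepsilon}(x)=\bigl(\int_{-\alpha_{N}}^{x-\varepsilon}+\int_{x+\varepsilon}^{\alpha_{N}}\bigr)\log|\tanh(C_{d}(x-z))|\,\nu_{N}(z)\,\d z$, computes $V_{\varepsilon}'$ explicitly (the kernel-derivative term $2C_{d}/\sinh(2C_{d}(x-z))$ plus boundary terms, which an integration by parts converts into $\bigl(\int_{-\alpha_{N}}^{\alpha_{N}}-\int_{x-\varepsilon}^{x+\varepsilon}\bigr)\log|\tanh(C_{d}(x-z))|\,\nu_{N}'(z)\,\d z$, the endpoint contributions dying by $\nu_{N}(\pm\alpha_{N})=0$), and then proves uniform convergence $V_{\varepsilon}'\to W$ on compact subintervals using extremal bounds on $\nu_{N}'$ and the fact that $\int_{-\varepsilon}^{\varepsilon}\log|\tanh(C_{d}t)|\,\d t\to 0$; the matching of one-sided derivatives at $\pm\alpha_{N}$ is then read off from the common formula $V'=W$ on both sides, with the continuity of $W$ at $\pm\alpha_{N}$ left implicit. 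You instead integrate by parts once at the level of $V$ itself, using the antiderivative $G$ of the kernel, so that $V(x)=\int_{-\alpha_{N}}^{\alpha_{N}}G(x-z)\,\nu_{N}'(z)\,\d z$ with the boundary term killed by the same hypothesis $\nu_{N}(\pm\alpha_{N})=0$, and then deduce $V'=W$ on all of $\mathbf{R}$ from Fubini plus the fundamental theorem of calculus applied to the continuous function $W$. Both arguments hinge on the same two mechanisms --- shifting the derivative from the singular kernel onto the density, and the integrability of the logarithmic singularity --- but yours avoids the case distinction and the uniform-convergence lemma, makes the continuity of $W$ at the endpoints explicit rather than tacit, and yields the strictly stronger conclusion $V_{\mathcal{D}_{d}}^{\mu[\nu_{N}]}\in C^{1}(\mathbf{R})$, including differentiability at $\pm\alpha_{N}$ themselves; what the paper's truncation argument buys in exchange is an entirely elementary calculus proof that never leaves Riemann integration and produces the explicit principal-value-flavored intermediate formulas.
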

\noindent
Therefore, 
in the remainder of this section 
we use condition \eqref{eq:char_opt_eq} from Theorem \ref{thm:char_opt_nu_N} 
replacing ``q.e.'' by ``for any''. 
Consequently, we consider the following problem. 

\begin{prob}
\label{prob:IE_IINEQ}
Find real numbers $\alpha_{N}$ and $K_{N}$, and a density function 
$\nu_{N} \in \mathcal{C}_{\mathrm{d}}^{1}(\mathbf{R}, 2(N+1))$ 
that satisfy $\mathop{\mathrm{supp}} \nu_{N} = [-\alpha_{N}, \alpha_{N}]$ and 
\begin{align}
& \label{eq:char_opt_eq_Prob} V_{\mathcal{D}_{d}}^{\mu[\nu_{N}]}(x) + \log w(x) = -K_{N} \quad \text{for any }x \in [-\alpha_{N}, \alpha_{N}], \\
& \label{eq:char_opt_ineq_Prob} V_{\mathcal{D}_{d}}^{\mu[\nu_{N}]}(x) + \log w(x) \leq -K_{N} \quad \text{for any }x \in \mathbf{R} \setminus [-\alpha_{N}, \alpha_{N}]. 
\end{align}
\end{prob}

Because the system in Problem~\ref{prob:IE_IINEQ} contains inequality \eqref{eq:char_opt_ineq_Prob}, 
it seems difficult to obtain the explicit form of $V_{\mathcal{D}_{d}}^{\mu[\nu_{N}]}$ outside of $[-\alpha_{N}, \alpha_{N}]$. 
However, we can in fact obtain it using the fact that the Green potential $U_{\mathcal{D}_{d}}^{\mu[\nu_{N}]}$ 
is harmonic on $\mathcal{D}_{d} \setminus \mathop{\mathrm{supp}} \nu_{N}$ and so is $V_{\mathcal{D}_{d}}^{\mu[\nu_{N}]}$. 
Then, we consider the following problem in order to obtain a solution for Problem~\ref{prob:IE_IINEQ}.

\begin{prob}
\label{prob:2Steps}
Find solutions to the following two subproblems. 
\begin{description}
\item[SP1]
Determine the explicit forms of 
$\alpha_{N}$, $K_{N}$ and 
$V_{\mathcal{D}_{d}}^{\mu[\nu_{N}]}$ on $\mathbf{R} \setminus [-\alpha_{N}, \alpha_{N}]$
under the constraints
$\nu_{N} \in \mathcal{C}_{\mathrm{d}}^{1}(\mathbf{R}, 2(N+1))$, 
$\mathop{\mathrm{supp}} \nu_{N} = [-\alpha_{N}, \alpha_{N}]$, 
\eqref{eq:char_opt_eq_Prob}, and \eqref{eq:char_opt_ineq_Prob}.

\item[SP2]
Let the solutions for $\alpha_{N}$, $K_{N}$, and $V_{\mathcal{D}_{d}}^{\mu[\nu_{N}]}$ of SP1 
be given by $\alpha_{N}^{\ast}$, $K_{N}^{\ast}$, and $v^{\ast}$, respectively. 
Then, solve the equation $V_{\mathcal{D}_{d}}^{\mu[\nu_{N}]} = v^{\ast}$ on $\mathbf{R}$, i.e.,
\begin{align}
\int_{-\alpha_{N}^{\ast}}^{\alpha_{N}^{\ast}} 
\log 
\left|
\tanh \left( \frac{\pi}{4d}(x - z) \right)
\right|
\nu_{N}(z)\, \d z
= v^{\ast}(x) \qquad x \in \mathbf{R}, 
\end{align}
and determine $\nu_{N}$\, (i.e., $\mu[\nu_{N}]$). 
\end{description}
\end{prob}
\noindent
We let $\nu_{N}^{\ast}$ denote the solution $\nu_{N}$ of SP2 in Problem~\ref{prob:2Steps}. 
In order to solve SP1 in Problem~\ref{prob:2Steps} approximately, 
we consider a procedure with some steps involving the Fourier transform. 
We will explain the basic ideas behind this procedure in Section~\ref{eq:reform_opt_harmonic},
and the full details of its steps are presented in Sections~\ref{sec:FT_IE} and~\ref{sec:Approx_para}. 
Furthermore, we obtain an approximate solution of SP2 in Problem~\ref{prob:2Steps} 
using a numerical computation based on the Fourier transform. 
Using these solutions for Problem~\ref{prob:2Steps}, 
we propose a procedure for obtaining the sampling points $a_{\ell}$ in Section~\ref{sec:Proc_formula}.

\subsection{A basic idea for SP1 in Problem~\ref{prob:2Steps}}
\label{eq:reform_opt_harmonic}

A key ingredient for solving SP1 in Problem~\ref{prob:2Steps} is provided by the following proposition. 

\begin{prop}
\label{prop:V_Dd_harmonic}
Let $\mu$ be a finite positive measure on $\mathbf{R}$ with compact support $[-\alpha, \alpha]$.
Then, the following hold: 
(i) The function $V_{\mathcal{D}_{d}}^{\mu}$ is harmonic on $\mathcal{D}_{d} \setminus [-\alpha, \alpha]$.
(ii) For any $\xi \in \partial \mathcal{D}_{d}$, 
\begin{align}
\lim_{\zeta \to \xi,\ \zeta \in \mathcal{D}_{d}} V_{\mathcal{D}_{d}}^{\mu}(\zeta) = 0.
\end{align}
\end{prop}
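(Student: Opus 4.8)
The plan is to establish the two claims by working directly with the explicit form of the Green potential $V_{\mathcal{D}_{d}}^{\mu}$ given in \eqref{eq:minus_G_pot_approx}, namely
\begin{align}
V_{\mathcal{D}_{d}}^{\mu}(\zeta) = \int_{-\alpha}^{\alpha} \log \left| \tanh \left( \frac{\pi}{4d} (\zeta - z) \right) \right| \, \d \mu(z),
\label{eq:plan_Vmu}
\end{align}
where I now allow $\zeta \in \mathcal{D}_{d}$ rather than restricting to $\mathbf{R}$. The essential point is that $V_{\mathcal{D}_{d}}^{\mu}$ is (the real part of) a superposition of the Green functions $g_{\mathcal{D}_{d}}(\zeta, z) = -\log|\tanh((\pi/(4d))(\zeta - z))|$, each of which inherits its harmonicity and boundary behavior from the known structure of the Green function of the strip $\mathcal{D}_{d}$. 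Since $\mu$ is a finite positive measure with compact support, I can move differential operators and limits inside the integral by appealing to dominated convergence, provided I control the integrand uniformly in $z \in [-\alpha, \alpha]$.

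For part (i), I would fix $\zeta_{0} \in \mathcal{D}_{d} \setminus [-\alpha, \alpha]$ and observe that for each fixed $z \in [-\alpha, \alpha] \subset \mathbf{R}$, the map $\zeta \mapsto \tanh((\pi/(4d))(\zeta - z))$ is holomorphic and nonvanishing on a neighborhood of $\zeta_{0}$ in $\mathcal{D}_{d}$ (its only zeros occur at $\zeta = z$, which lie on the support and are excluded, and its poles occur on $\partial \mathcal{D}_{d}$). Hence $z \mapsto \log|\tanh((\pi/(4d))(\zeta - z))|$ is harmonic in $\zeta$ near $\zeta_{0}$, being the real part of a local holomorphic logarithm. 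Harmonicity of $V_{\mathcal{D}_{d}}^{\mu}$ then follows by differentiating under the integral sign: on a small closed disk around $\zeta_{0}$ staying away from $[-\alpha,\alpha]$ and from $\partial \mathcal{D}_{d}$, the integrand and its first and second $\zeta$-derivatives are bounded uniformly in $z$, so $\Delta_{\zeta} V_{\mathcal{D}_{d}}^{\mu}(\zeta_{0}) = \int_{-\alpha}^{\alpha} \Delta_{\zeta}\, g\text{-term}\, \d\mu(z) = 0$. Alternatively, I could verify harmonicity via the mean value property and Fubini, which avoids differentiating under the integral and is cleaner.

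For part (ii), the key is that the Green function vanishes on the boundary of its region: for fixed $z \in \mathbf{R}$, one has $\lim_{\zeta \to \xi} \log|\tanh((\pi/(4d))(\zeta - z))| = 0$ as $\zeta \to \xi \in \partial \mathcal{D}_{d}$, since on the lines $\mathop{\mathrm{Im}}\zeta = \pm d$ the argument $(\pi/(4d))(\zeta - z)$ has imaginary part $\pm \pi/4$, where $|\tanh|$ equals $1$. I would then pass the limit inside the integral in \eqref{eq:plan_Vmu} using dominated convergence, with total mass $\mu([-\alpha,\alpha])$ finite as the dominating normalization. The main obstacle is precisely justifying this interchange near the boundary: as $\zeta \to \xi$ the integrand converges pointwise to $0$ in $z$, but I must produce an integrable (in fact bounded) dominating function uniform in $z \in [-\alpha,\alpha]$ and in $\zeta$ ranging over a neighborhood of $\xi$. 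Because $\xi \in \partial\mathcal{D}_{d}$ is bounded away from the real support $[-\alpha,\alpha]$ in the vertical direction by the fixed distance $d$, the quantity $\tanh((\pi/(4d))(\zeta-z))$ stays in a compact region of $\mathbf{C}$ bounded away from its singularities uniformly in $z$, so $\log|\tanh|$ is bounded there; this supplies the needed domination and closes the argument. I would treat the corner/infinity behavior of $\partial\mathcal{D}_{d}$ with the same uniform-distance reasoning, which is where a little care is warranted but no genuine difficulty arises.
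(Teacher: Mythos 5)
Your proof is correct, but it takes a genuinely different route from the paper. The paper disposes of (i) by citing the general theory of Green potentials (Theorem II.5.1 (ii), (iv) in Saff \& Totik), and of (ii) by the single observation that $\log|\tanh((\pi/(4d))\xi)| = 0$ whenever $|\mathop{\mathrm{Im}}\xi| = d$, leaving the interchange of limit and integral implicit. You instead verify everything by hand from the explicit formula: for (i), harmonicity of each kernel $\zeta \mapsto \log|\tanh((\pi/(4d))(\zeta - z))|$ away from $z$ (real part of a local holomorphic logarithm of a nonvanishing holomorphic function), followed by differentiation under the integral sign (or the mean-value property plus Fubini) justified by uniform bounds on compact sets; for (ii), the same boundary identity as the paper, but with the dominated-convergence step spelled out via the observation that for $\zeta$ near $\partial\mathcal{D}_{d}$ and $z \in [-\alpha,\alpha]$ the values $\tanh((\pi/(4d))(\zeta - z))$ stay in a compact set avoiding $0$ and $\infty$, uniformly in $z$. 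What your approach buys is a self-contained, elementary argument that actually supplies the justification the paper's ``immediately follows'' glosses over; what the paper's citation buys is brevity and the fact that (i) holds for Green potentials on general domains, not just the strip.

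One small correction: you assert that the poles of $\zeta \mapsto \tanh((\pi/(4d))(\zeta - z))$ lie on $\partial\mathcal{D}_{d}$. They do not: $\tanh u$ has poles at $u = \i\pi(k + 1/2)$, so for $z \in \mathbf{R}$ the poles sit at $\mathop{\mathrm{Im}}\zeta = \pm 2d, \pm 6d, \ldots$, i.e., at distance $d$ \emph{beyond} the boundary lines $\mathop{\mathrm{Im}}\zeta = \pm d$. This slip is harmless here --- indeed it makes your life easier, and your domination argument in (ii) tacitly relies on the correct location: if the poles really lay on $\partial\mathcal{D}_{d}$, the uniform bound on $\log|\tanh|$ near the boundary would fail, whereas in fact $|\tanh| \to 1$ there, consistent with the limit being $0$.
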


\noindent
\begin{proof}
The statement (i) is a straightforward consequence of the more general result of
%{\cite[Theorem II.5.1 (ii) (iv)]{bib:SaffTotik_LogPotExtField_1997}}, 
{Theorem II.5.1 (ii) (iv) in \cite{bib:SaffTotik_LogPotExtField_1997}}, 
and the statement (ii) immediately follows from the fact that 
$\log | \tanh ((\pi/(4d)) \xi)| = 0$ for any $\xi$ with $|\mathop{\mathrm{Im}} \xi| = d$.
\end{proof}

\vskip5pt

From Proposition \ref{prop:V_Dd_harmonic} and the equivalent expression of \eqref{eq:char_opt_eq_Prob} given by
\begin{align}
V_{\mathcal{D}_{d}}^{\mu}(x) = -\log w(x) - K_{N} \qquad (x \in [-\alpha_{N}, \alpha_{N}]), 
\end{align}
it follows that the function $\upsilon (x,y) = V_{\mathcal{D}_{d}}^{\mu}(x+ \mathrm{i}\, y)$ of real numbers $x$ and $y$
is the solution of the following Dirichlet problem on the doubly-connected region 
$\mathcal{D}_{d} \setminus [-\alpha_{N}, \alpha_{N}]$. 
\begin{align}
& \triangle \upsilon = 0 \quad \text{on} \quad \mathcal{D}_{d} \setminus [-\alpha_{N}, \alpha_{N}], \label{eq:Laplace} \\
& \upsilon(x, 0) = -\log w(x) - K_{N} \qquad (x \in [-\alpha_{N}, \alpha_{N}]), \label{eq:Laplace_inner_bnd} \\
& \upsilon(x, \pm d) = 0 \qquad (x \in \mathbf{R}). \label{eq:Laplace_outer_bnd}
\end{align}
Here we regard $\mathcal{D}_{d}$ as a region in $\mathbf{R}^{2}$. 
If we can determine the optimal values of the parameters $\alpha_{N}$ and $K_{N}$, and 
obtain the solution $\upsilon = \upsilon_{N}^{\ast}$ of \eqref{eq:Laplace}--\eqref{eq:Laplace_outer_bnd} 
satisfying the inequality equivalent to \eqref{eq:char_opt_ineq_Prob} given by
\begin{align}
\upsilon(x, 0) \leq -\log w(x) - K_{N} \qquad (x \in \mathbf{R} \setminus [-\alpha_{N}, \alpha_{N}]), 
\label{eq:Laplace_ineq}
\end{align}
then we only have to solve 
$V_{\mathcal{D}_{d}}^{\mu}(x) = \upsilon_{N}^{\ast}(x, 0) \ (x \in \mathbf{R})$
to obtain the optimal measure $\mu_{N}^{\ast}$. 
Therefore, we must carry out the following tasks. 
\begin{itemize}
\item 
Determine the optimal values of $\alpha_{N}$ and $K_{N}$, i.e., $\alpha_{N}^{\ast}$ and $K_{N}^{\ast}$. 
\item 
Solve the Dirichlet problem \eqref{eq:Laplace}--\eqref{eq:Laplace_outer_bnd} with condition \eqref{eq:Laplace_ineq} to obtain $\upsilon_{N}^{\ast}$.
\item 
Solve the equation 
$V_{\mathcal{D}_{d}}^{\mu}(x) = \upsilon_{N}^{\ast}(x, 0) \ (x \in \mathbf{R})$
to obtain $\mu_{N}^{\ast}$. 
\end{itemize}

The smoothness \eqref{eq:smooth_V}
and the total measure \eqref{eq:all_meas_b} conditions allow us to  
determine $\alpha_{N}^{\ast}$ and $K_{N}^{\ast}$ before we have obtained $\upsilon_{N}^{\ast}$. 
More precisely, we first derive an expression for $\upsilon$ containing the unknown parameters $\alpha_{N}$ and $K_{N}$, 
and then we apply these conditions to determine $\alpha_{N}^{\ast}$ and $K_{N}^{\ast}$. 
We first consider the Dirichlet problem defined by \eqref{eq:Laplace}--\eqref{eq:Laplace_outer_bnd}. 
In general, 
we can obtain a closed form of the solution of such a Dirichlet problem on a doubly-connected region
by conformally mapping the region to an annulus\footnote{
This mapping can be realized by combination of the $\tanh$ map and the inverse map of $f$ from (49) in 
%{\cite[p.~295]{bib:Nehari1975}}. 
{p.~295 in \cite{bib:Nehari1975}}. 
} 
and using the explicit solution of the Dirichlet problem on the annulus.
For example, see 
%{\cite[pp.~293--295]{bib:Nehari1975}} 
{pp.~293--295 in \cite{bib:Nehari1975}} 
and 
%{\cite[\S 17.4]{bib:Henrici1993}}.
{\S 17.4 in \cite{bib:Henrici1993}}.
However, the explicit solution is rather complicated. 
In order to obtain simple approximations for $\upsilon$, $\alpha_{N}^{\ast}$ and $K_{N}^{\ast}$, 
we derive a partial approximate solution of the Dirichlet problem \eqref{eq:Laplace}--\eqref{eq:Laplace_outer_bnd}. 
In fact, we only require the approximate solution on $\mathbf{R} \setminus [-\alpha_{N}, \alpha_{N}]$
in order to solve SP1 in Problem~\ref{prob:2Steps}.  
Next, we consider the Dirichlet problem on $\mathcal{D}_{d} \cap \{ z \mid | \mathop{\mathrm{Re}} z | > \alpha_{N} \}$. 

For this purpose, we perform a separation of variables $\upsilon(x,y) = X(x)Y(y)$ 
on the region $\mathcal{D}_{d} \cap \{ z \mid | \mathop{\mathrm{Re}} z | > \alpha_{N} \}$. 
For $x > \alpha_{N}$, 
we can derive an approximation of $\upsilon(x,y)$ 
from \eqref{eq:Laplace} and \eqref{eq:Laplace_outer_bnd} as
\begin{align}
\upsilon(x,y) \approx \sum_{n = 1}^{\infty} A_{n} \exp(-\sqrt{c_{n}} x)\, \cos(\sqrt{c_{n}} y), 
\label{eq:SepValApproxSol}
\end{align}
where $c_{n} = ( \pi (n-1/2)/d )^{2}$ for $n = 1,2, \ldots \, $. 
In this derivation, we have used the condition $\lim_{x \to \infty} X(x) = 0$, 
which is the boundary condition \eqref{eq:Laplace_outer_bnd} at infinity.
Here, 
we intuitively expect that the term for $n=1$ of \eqref{eq:SepValApproxSol} is a leading term, 
although we have not obtained a mathematical justification. 
Then, from \eqref{eq:SepValApproxSol} we have 
\begin{align}
\upsilon(x,y) \approx  
A_{1} \exp \left( - \frac{\pi}{2d}\, x \right)\, 
\cos \left( \frac{\pi}{2d}\, y \right).
\label{eq:SepValApproxSol_simple}
\end{align}
Furthermore, by applying the condition \eqref{eq:Laplace_inner_bnd} at $x=\alpha_{N}$ to \eqref{eq:SepValApproxSol_simple}, 
we can determine $A_{1}$ and obtain
\begin{align}
\upsilon(x,y) \approx  -(\log w(\alpha_{N}) + K_{N})\, 
\exp \left( - \frac{\pi}{2d}\, (x - \alpha_{N}) \right)\, 
\cos \left( \frac{\pi}{2d}\, y \right).
\label{eq:SepValApproxSol_simple_A1}
\end{align}
Then, from the smoothness condition \eqref{eq:smooth_V} at $x=\alpha_{N}$, 
we can obtain a relation between $\alpha_{N}$ and $K_{N}$ as
\begin{align}
\label{eq:approx_determ_K}
& \frac{\pi}{2d}(\log w(\alpha_{N}) + K_{N}) = - \frac{w'(\alpha_{N})}{w(\alpha_{N})} \notag \\
& \iff
K_{N} = K(\alpha_{N}) := -\log w(\alpha_{N}) - \frac{2d}{\pi} \frac{w'(\alpha_{N})}{w(\alpha_{N})}. 
\end{align}
Because of the symmetry of the problem with respect to the imaginary axis, 
we can apply a similar argument to the problem for $x < -\alpha_{N}$. 
Thus we obtain an approximation of $\upsilon$ on $\mathcal{D}_{d} \cap \{ z \mid |\mathop{\mathrm{Re}} z | > \alpha_{N} \}$
containing the unknown parameter $\alpha_{N}$ as 
\begin{align}
\upsilon(x,y) \approx \tilde{\upsilon}_{N}(x,y) :=
\frac{2d}{\pi} \frac{w'(\alpha_{N})}{w(\alpha_{N})} 
\exp \left( - \frac{\pi}{2d}\, (|x| - \alpha_{N}) \right)\, 
\cos \left( \frac{\pi}{2d}\, y \right).
\label{eq:SepValApproxSol_with_alpha}
\end{align}
By noting Assumption \ref{assump:w_convex} about the convexity of $-\log w(x)$, 
we can confirm that $\tilde{\upsilon}_{N}$ satisfies the condition \eqref{eq:Laplace_ineq}. That is, 
\begin{align}
\tilde{\upsilon}_{N}(x, 0) \leq -\log w(x) - K(\alpha_{N}) \qquad (x \in \mathbf{R} \setminus [-\alpha_{N}, \alpha_{N}]).
\label{eq:Laplace_ineq_approx_v_N}
\end{align}
This is because $\tilde{\upsilon}_{N}(x, 0)$ is concave on $\mathbf{R} \setminus [-\alpha_{N}, \alpha_{N}]$
and satisfies 
$\tilde{\upsilon}_{N}(\pm \alpha_{N}, 0) = -\log w(\pm \alpha_{N}) - K(\alpha_{N})$ and 
$(\tilde{\upsilon}_{N})_{x}(\pm \alpha_{N}, 0) = -w'(\pm \alpha_{N})/w(\pm \alpha_{N})$ 
owing to \eqref{eq:SepValApproxSol_simple_A1} and \eqref{eq:approx_determ_K}, respectively. 

From the above arguments, 
we can consider the following approximation of the equation 
$V_{\mathcal{D}_{d}}^{\mu}(x) = \upsilon_{N}^{\ast}(x, 0) \ (x \in \mathbf{R})$: 
\begin{align}
& V_{\mathcal{D}_{d}}^{\mu[\nu_{N}]}(x) = 
\begin{cases}
-\log w(x) - K(\alpha_{N}) & (x \in [-\alpha_{N}, \alpha_{N}]), \\
\tilde{\upsilon}_{N}(x,0) & (x \in \mathbf{R} \setminus [-\alpha_{N}, \alpha_{N}]).
\end{cases}
\label{eq:base_IE}
\end{align}
In fact, this is sufficient for obtaining an approximation for the solution $\nu_{N}^{\ast}$ as demonstrated below. 
Then, letting $\tilde{\nu}_{N}^{\ast}$ denote the solution of the equation \eqref{eq:base_IE}, 
we propose a procedure to solve SP1 in Problem~\ref{prob:2Steps} as follows. 
\begin{description}
\item[Step 1]
Derive the expression of $\mathcal{F}[\tilde{\nu}_{N}^{\ast}]$, which is 
the Fourier transform of $\tilde{\nu}_{N}^{\ast}$, from the equation \eqref{eq:base_IE}. 
\item[Step 2]
Obtain the approximate value of $\alpha_{N}^{\ast}$ by applying the condition \eqref{eq:all_meas_b} to $\mathcal{F}[\tilde{\nu}_{N}^{\ast}]$. 
Let $\tilde{\alpha}_{N}^{\ast}$ denote the approximate value. 
\end{description}
Once we have performed the above two steps, 
we obtain an approximate equation of $\nu_{N}$ on $\mathbf{R}$ by substituting $\tilde{\alpha}_{N}^{\ast}$ into \eqref{eq:base_IE}, 
which completes SP1 in Problem~\ref{prob:2Steps}. 
Therefore, following the two steps above, it only remains to solve SP2 in Problem~\ref{prob:2Steps} in order to obtain $\tilde{\nu}_{N}^{\ast}$. 
We present the procedure for carrying out this task as Step 3 below. 
\begin{description}
\item[Step 3]
Substitute $\tilde{\alpha}_{N}^{\ast}$ into the expression of $\mathcal{F}[\tilde{\nu}_{N}^{\ast}]$
and obtain $\tilde{\nu}_{N}^{\ast}$ by numerically computing 
the inverse Fourier transform of $\mathcal{F}[\tilde{\nu}_{N}^{\ast}]$.  
\end{description}
We present the full details of Steps 1 and 2 in Sections \ref{sec:FT_IE} and \ref{sec:Approx_para}, respectively. 
In Section \ref{sec:Proc_formula}, we present a procedure to design an accurate formula. 
Some notes relating to Step 3 are presented in Section \ref{sec:NumEx}, the section describing our numerical experiments.

\begin{rem}
Here, we explain the reasons why we have used the Fourier transform. 
In principle, 
we can determine $\tilde{\alpha}_{N}^{\ast}$, $\tilde{K}_{N}^{\ast}$, and $\tilde{\nu}_{N}^{\ast}$
by numerically solving a direct discretizaion of the equation \eqref{eq:base_IE}.
In that case, 
we may determine $\tilde{\alpha}_{N}^{\ast}$ by trial and error 
so that the numerical solution $\tilde{\nu}_{N}^{\ast}$ (approximately) satisfies the condition \eqref{eq:all_meas_b}.
Such a method is sufficient if we are only required to obtain numerical values for $\tilde{\alpha}_{N}^{\ast}$, $\tilde{K}_{N}^{\ast}$, 
and the sampling points given by $\tilde{\nu}_{N}^{\ast}$. 
However, we intend not only to derive a theoretical error estimate for our formula 
using closed expressions of $\tilde{\alpha}_{N}^{\ast}$ and $\tilde{K}_{N}^{\ast}$, 
but also to investigate how the error depends on the parameter $d$ and the weight $w$
which define the weighted Hardy space $\boldsymbol{H}^{\infty}(\mathcal{D}_{d}, w)$. 
These are the reasons why we employ the Fourier transform. 
See Sections \ref{sec:Approx_para} and \ref{sec:Err} for further details and examples.
Furthermore, as a by-product of our application of the Fourier transform, 
we can use the fast Fourier transform (FFT) in the execution of the above procedure. 
\end{rem}

\subsection{Step 1 for SP1 in Problem~\ref{prob:2Steps}: the Fourier transform of the solution of the integral equation \eqref{eq:base_IE}}
\label{sec:FT_IE}

We first note that the explicit form of the LHS of \eqref{eq:base_IE} is given by 
\begin{align}
V_{\mathcal{D}_{d}}^{\mu[\nu_{N}]}(x)
& = \int_{-\alpha_{N}}^{\alpha_{N}} \log \left| \tanh \left( \frac{\pi}{4d} (x-z) \right) \right| \, \d \mu[\nu_{N}](z) \notag \\
& = \int_{-\infty}^{\infty} \log \left| \tanh \left( \frac{\pi}{4d} (x-z) \right) \right| \, \nu_{N}(z) \, \d z. \notag 
\end{align}
Then, equation \eqref{eq:base_IE} with $\nu_{N} = \tilde{\nu}_{N}$ can be rewritten in the form
\begin{align}
& \int_{-\infty}^{\infty} \log \left| \tanh \left( \frac{\pi}{4d} (x-z) \right) \right| \, \tilde{\nu}_{N}(z) \, \d z \notag \\
& =
\begin{cases}
-\log w(x) - K(\alpha_{N}) & (x \in [-\alpha_{N}, \alpha_{N}]), \\
\frac{2d}{\pi} \frac{w'(\alpha_{N})}{w(\alpha_{N})} 
\exp \left( - \frac{\pi}{2d}\, (|x| - \alpha_{N}) \right) & (x \in \mathbf{R} \setminus [-\alpha_{N}, \alpha_{N}]).
\end{cases}
\label{eq:base_IE_RW}
\end{align}
For a function $f$ in a Lebesgue measurable space on $\mathbf{R}$, let $\mathcal{F}[f]$ be the Fourier transform of $f$ given by
\begin{align}
\mathcal{F}[f](\omega) = \int_{-\infty}^{\infty} f(x)\, \e^{-\i \, \omega \, x} \, \d x.
\end{align}
According to this definition, we can use the formula {\citep[p.~43, 7.112]{bib:Oberhettinger1990}}
\begin{align}
\mathcal{F}\left[ \log \left| \tanh \left( \frac{\pi}{4d} (\, \cdot \, ) \right) \right| \right](\omega) 
= - \frac{\pi}{\omega} \, \tanh \left( d\, \omega \right), 
\end{align}
to derive the Fourier transforms of both sides of \eqref{eq:base_IE_RW} as follows:
\begin{align}
\text{LHS}
= & 
\left[ - \frac{\pi}{\omega} \, \tanh \left( d\, \omega \right) \right] \, \cdot \, 
\mathcal{F}[\tilde{\nu}_{N}](\omega), \\
\text{RHS} 
= & 
\int_{-\alpha_{N}}^{\alpha_{N}} (-\log w(x) - K(\alpha_{N})) \, \mathrm{e}^{-\mathrm{i}\, \omega\, x} \mathrm{d} x \notag \\
& + 
2 \mathop{\mathrm{Re}} 
\left[
\int_{\alpha_{N}}^{\infty} 
\frac{2d}{\pi} \frac{w'(\alpha_{N})}{w(\alpha_{N})} 
\exp \left( - \frac{\pi}{2d}\, (x - \alpha_{N}) \right) \, \mathrm{e}^{-\mathrm{i}\, \omega\, x} 
\mathrm{d} x
\right] \notag \\
= & - \int_{-\alpha_{N}}^{\alpha_{N}} \log \frac{w(x)}{w(\alpha_{N})} \, \mathrm{e}^{-\mathrm{i}\, \omega\, x} \mathrm{d} x
+ \frac{4d}{\pi} \frac{w'(\alpha_{N})}{w(\alpha_{N})}\, \frac{\sin (\alpha_{N} \omega)}{\omega} \notag \\
& + \frac{4d}{\pi} \frac{w'(\alpha_{N})}{w(\alpha_{N})} 
\frac{2 \pi d \cos(\alpha_{N} \omega) - 4 d^{2} \omega \sin (\alpha_{N} \omega)}{\pi^{2} + 4 d^{2} \omega^{2}} \notag \\
= &
- \int_{-\alpha_{N}}^{\alpha_{N}} \log \frac{w(x)}{w(\alpha_{N})} \, \mathrm{e}^{-\mathrm{i}\, \omega\, x} \mathrm{d} x
+ \frac{4d}{\pi} \frac{w'(\alpha_{N})}{w(\alpha_{N})}\, 
\frac{ \pi^{2} \sin(\alpha_{N} \omega) + 2 \pi d\, \omega \cos (\alpha_{N} \omega)}{\omega (\pi^{2} + 4 d^{2} \omega^{2})}.
\label{eq:MainIE_ext_approx_R_Fourier}
\end{align}
Therefore, we have
\begin{align}
\mathcal{F}[\tilde{\nu}_{N}](\omega)
= &
\frac{\omega}{\pi\, \tanh (d\, \omega)}
\int_{-\alpha_{N}}^{\alpha_{N}} \log \frac{w(x)}{w(\alpha_{N})} \, \mathrm{e}^{-\mathrm{i}\, \omega\, x} \mathrm{d} x \notag \\
& - \frac{4d}{\pi} \frac{w'(\alpha_{N})}{w(\alpha_{N})}\, 
\frac{ \pi \sin(\alpha_{N} \omega) + 2 d\, \omega \cos (\alpha_{N} \omega)}{ (\pi^{2} + 4 d^{2} \omega^{2})\, \tanh(d\, \omega)} \notag \\
= &
\frac{1}{\i\, \pi\, \tanh (d\, \omega)}
\int_{-\alpha_{N}}^{\alpha_{N}} \frac{w'(x)}{w(x)} \, \mathrm{e}^{-\mathrm{i}\, \omega\, x} \mathrm{d} x \notag \\
& - \frac{4d}{\pi} \frac{w'(\alpha_{N})}{w(\alpha_{N})}\, 
\frac{ \pi \sin(\alpha_{N} \omega) + 2 d\, \omega \cos (\alpha_{N} \omega)}{ (\pi^{2} + 4 d^{2} \omega^{2})\, \tanh(d\, \omega)}. 
\label{eq:FT_nu_N}
\end{align}
We can confirm the existence of the inverse Fourier transform of $\mathcal{F}[\tilde{\nu}_{N}](\omega)$. 

\begin{thm}
\label{thm:FT_nu_N}
The function $\mathcal{F}[\tilde{\nu}_{N}]$ in \eqref{eq:FT_nu_N} belongs to $L^{2}(\mathbf{R})$ 
and has an inverse Fourier transform. 
\end{thm}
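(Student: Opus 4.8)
The plan is to prove membership in $L^{2}(\mathbf{R})$, from which the existence of the inverse Fourier transform follows at once by the Plancherel theorem. I would split $\mathcal{F}[\tilde{\nu}_{N}]$ from \eqref{eq:FT_nu_N} into its two summands, namely the integral term
\[
T_{1}(\omega) = \frac{1}{\i\,\pi\,\tanh(d\,\omega)}\int_{-\alpha_{N}}^{\alpha_{N}}\frac{w'(x)}{w(x)}\,\e^{-\i\,\omega\,x}\,\d x
\]
and the elementary term
\[
T_{2}(\omega) = -\frac{4d}{\pi}\frac{w'(\alpha_{N})}{w(\alpha_{N})}\,\frac{\pi\sin(\alpha_{N}\omega)+2d\,\omega\cos(\alpha_{N}\omega)}{(\pi^{2}+4d^{2}\omega^{2})\,\tanh(d\,\omega)},
\]
and analyze each near $\omega=0$ and as $\omega\to\pm\infty$. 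Since the only possible singularity of either summand is at $\omega=0$, where $\tanh(d\,\omega)$ vanishes, and since $\phi:=w'/w=(\log w)'$ is analytic on a neighborhood of $\mathbf{R}$ by Assumption \ref{assump:w} and hence smooth and bounded on $[-\alpha_{N},\alpha_{N}]$, it suffices to show that the singularity at the origin is removable and that each summand decays like $\mathrm{O}(1/\omega)$ at infinity.

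The key observation near $\omega=0$ is that Assumption \ref{assump:w_even} makes $w$ even, so $\phi$ is odd. Consequently the cosine part of the integral in $T_{1}$ vanishes and
\[
\int_{-\alpha_{N}}^{\alpha_{N}}\frac{w'(x)}{w(x)}\,\e^{-\i\,\omega\,x}\,\d x = -\i\int_{-\alpha_{N}}^{\alpha_{N}}\phi(x)\sin(\omega x)\,\d x = \mathrm{O}(\omega)\quad(\omega\to 0),
\]
which cancels the simple zero $\tanh(d\,\omega)\sim d\,\omega$ in the denominator, so that $T_{1}$ extends continuously to $\omega=0$. For $T_{2}$ the numerator is $\mathrm{O}(\omega)$ while the denominator is $\sim\pi^{2}d\,\omega$ as $\omega\to 0$, so $T_{2}$ is likewise bounded near the origin. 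Hence $\mathcal{F}[\tilde{\nu}_{N}]$ is continuous on all of $\mathbf{R}$.

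For the decay at infinity I would use that $\tanh(d\,\omega)\to\pm1$, so the factor $1/\tanh(d\,\omega)$ stays bounded away from the origin. A single integration by parts over $[-\alpha_{N},\alpha_{N}]$, exploiting the boundedness of $\phi$ and $\phi'$ on this compact interval, yields $\int_{-\alpha_{N}}^{\alpha_{N}}\phi(x)\sin(\omega x)\,\d x=\mathrm{O}(1/\omega)$, whence $T_{1}=\mathrm{O}(1/\omega)$. For $T_{2}$ the numerator grows at most linearly in $\omega$ while the denominator grows like $4d^{2}\omega^{2}$, so $T_{2}=\mathrm{O}(1/\omega)$ as well. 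Being continuous on $\mathbf{R}$ and $\mathrm{O}(1/\omega)$ at infinity, $\mathcal{F}[\tilde{\nu}_{N}]$ lies in $L^{2}(\mathbf{R})$, and by Plancherel's theorem it is the Fourier transform of a unique $L^{2}$ function, which is the desired inverse Fourier transform.

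The main obstacle I anticipate is the analysis at $\omega=0$: the vanishing of $\tanh(d\,\omega)$ there would create a genuine singularity were it not exactly compensated, and this compensation relies essentially on the oddness of $w'/w$ furnished by Assumption \ref{assump:w_even}. Making the cancellation rigorous rather than merely matching leading orders requires some care, for instance by writing $\tanh(d\,\omega)=d\,\omega\,(1+\mathrm{O}(\omega^{2}))$ and expanding $\sin(\omega x)=\omega x+\mathrm{O}(\omega^{3})$ uniformly for $x\in[-\alpha_{N},\alpha_{N}]$; once this is settled, the remaining estimates are routine.
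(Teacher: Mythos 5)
Your proof is correct, and its overall skeleton coincides with the paper's: the same splitting into the integral term $T_{1}$ and the elementary term $T_{2}$, and the same cancellation at the origin (the paper obtains it by an l'H\^opital-type limit computation, you by the oddness of $w'/w$ plus Taylor expansion of $\sin(\omega x)$ and $\tanh(d\omega)$ --- these are the same mechanism, and the oddness of $w'/w$ from Assumption \ref{assump:w_even} is indeed what makes $\int_{-\alpha_N}^{\alpha_N}(w'/w)\,\d x=0$ and hence the limit finite in the paper's version too). Where you genuinely diverge is the tail estimate for $T_{1}$: the paper observes that $\int_{-\alpha_N}^{\alpha_N}(w'(x)/w(x))\,\e^{-\i\omega x}\,\d x$ is the Fourier transform of the compactly supported square-integrable function $(w'/w)\,\chi_{[-\alpha_N,\alpha_N]}$ and invokes Plancherel, needing only $w'/w\in L^{2}$, whereas you integrate by parts to get pointwise $\mathrm{O}(1/\omega)$ decay, which requires $w'/w\in C^{1}$ on $[-\alpha_N,\alpha_N]$. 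Your hypothesis is satisfied under Assumption \ref{assump:w} (analyticity and non-vanishing of $w$ in $\mathcal{D}_d$ make $w'/w$ analytic near $\mathbf{R}$), and in exchange you get a stronger conclusion (continuity of $\mathcal{F}[\tilde{\nu}_N]$ and an explicit decay rate); but note that the paper's weaker $L^{2}$ route also covers the model weights of Examples \ref{ex:SE}--\ref{ex:DE}, such as $w(x)=\exp(-\beta|x|)$ or $\exp(-\beta\e^{\gamma|x|})$, for which $w'/w$ has a jump at $x=0$ and your integration by parts would need to be carried out piecewise with an extra (harmless, still $\mathrm{O}(1/\omega)$) boundary term at the origin.
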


\noindent
\begin{proof}
We must confirm the square integrability on $\mathbf{R}$ of the first and second terms of the RHS in \eqref{eq:FT_nu_N}.
For the first term, we have 
\begin{align}
& \lim_{\omega \to 0} 
\frac{1}{\mathrm{i}\, \pi \, \tanh( d\, \omega )}
\int_{-\alpha_{N}}^{\alpha_{N}} \frac{w'(x)}{w(x)} \, \mathrm{e}^{-\mathrm{i}\, \omega\, x} \mathrm{d} x \notag \\
& =
\lim_{\omega \to 0} 
\frac{\cosh^{2}( d\, \omega )}{\mathrm{i}\, \pi\, d}
\int_{-\alpha_{N}}^{\alpha_{N}} \frac{-\mathrm{i}\, x\, w'(x)}{w(x)} \, \mathrm{e}^{-\mathrm{i}\, \omega \, x} \mathrm{d} x 
= 
- \frac{1}{\pi \, d} \int_{-\alpha_{N}}^{\alpha_{N}} \frac{x\, w'(x)}{w(x)} \, \mathrm{d}x, 
\label{eq:FTbp_first_zero}
\end{align}
which shows that the first term is bounded around the origin. 
Moreover, for $\omega$ with $|\omega| \gg 1$ the function $1/\tanh(d\, \omega)$ is bounded, 
and the function
\[
\int_{-\alpha_{N}}^{\alpha_{N}} \frac{w'(x)}{w(x)} \, \mathrm{e}^{-\mathrm{i}\, \omega\, x} \mathrm{d} x 
\]
is square integrable because this is the Fourier transform of the square integrable function 
$(w'(x)/w(x))\, \chi_{[-\alpha_{N}, \, \alpha_{N}]}(x)$. 
Thus, the first term is in $L^{2}(\mathbf{R})$. 
For the second term, we have
\begin{align}
& \lim_{\omega \to 0} 
\frac{4\, d}{\pi}
\frac{w'(\alpha_{N})}{w(\alpha_{N})}\, 
\frac{ \pi \sin(\alpha_{N} \omega) + 2d\, \omega \cos (\alpha_{N} \omega) }{(\pi^{2}+4d^{2}\, \omega^{2}) \, \tanh( d\, \omega )} \notag \\
& =
\frac{4\, d}{\pi^{3}} 
\frac{w'(\alpha_{N})}{w(\alpha_{N})}
\lim_{\omega \to 0} 
\frac{ \pi \sin(\alpha_{N} \omega) + 2d\, \omega \cos (\alpha_{N} \omega) }{\tanh( d\, \omega )}  \notag \\
& = 
\frac{4\, d}{\pi^{3}} 
\frac{w'(\alpha_{N})}{w(\alpha_{N})}
\lim_{\omega \to 0} 
\frac{(\pi \alpha_{N} + 2d) \cos(\alpha_{N} \omega) - 2d\, \alpha_{N}\, \omega \sin(\alpha_{N} \omega)}{d/\cosh^{2}(d\, \omega)}\notag \\
& = \frac{4(\pi \alpha_{N} + 2d)}{\pi^{3}} \frac{w'(\alpha_{N})}{w(\alpha_{N})}, 
\label{eq:FTbp_second_zero} 
\end{align}
which shows that the second term is bounded around the origin. 
Moreover, 
for $\omega$ with $|\omega| \gg 1$ the function $1/\tanh(d\, \omega)$ is bounded, 
and the function $\omega/(\pi^{2}+4d^{2}\, \omega^{2})$ is square integrable. 
Thus, the second term is in $L^{2}(\mathbf{R})$. 
\end{proof}

\subsection{Step 2 for SP1 in Problem~\ref{prob:2Steps}: approximation of the parameters $\alpha_{N}^{\ast}$ and $K_{N}^{\ast}$}
\label{sec:Approx_para}

We use the fact that the condition \eqref{eq:all_meas_b} 
can be described in terms of the Fourier transform of $\nu_{N}$ with 
$\mu_{N} = \mu[\nu_{N}]$ and 
$\mathop{\mathrm{supp}} \nu_{N} = [-\alpha_{N}, \alpha_{N}]$, as follows:
\begin{align}
\int_{-\alpha_{N}}^{\alpha_{N}} \nu_{N}(z) \, \mathrm{d}z = 2(N+1)
& \iff 
\int_{-\infty}^{\infty} \nu_{N}(z) \, \mathrm{d}z = 2(N+1) \notag \\
& \iff 
\lim_{\omega \to 0} \mathcal{F}[\nu_{N}](\omega)
= 2(N+1). \label{eq:b_const_equiv}
\end{align}
According to this condition \eqref{eq:b_const_equiv}, 
we assume that $\mathcal{F}[\tilde{\nu}_{N}]$ satisfies
\begin{align}
\label{eq:b_const_approx}
\lim_{\omega \to 0} \mathcal{F}[\tilde{\nu}_{N}](\omega) = 2(N+1)
\end{align}
in order to determine an approximate value of $\alpha_{N}^{\ast}$. 
It follows from \eqref{eq:FTbp_first_zero} and \eqref{eq:FTbp_second_zero} that 
the condition \eqref{eq:b_const_approx} is equivalent to 
\begin{align}
- \frac{1}{\pi \, d} \int_{-\alpha_{N}}^{\alpha_{N}} \frac{x\, w'(x)}{w(x)} \, \mathrm{d}x
- \frac{4(\pi \alpha_{N} + 2d)}{\pi^{3}} \frac{w'(\alpha_{N})}{w(\alpha_{N})} 
= 2(N+1).
\label{eq:det_alpha}
\end{align}
Equation \eqref{eq:det_alpha} has a unique solution
because the LHS of equation \eqref{eq:det_alpha} is a strictly increasing function of $\alpha_{N}$
that increases from $0$ (for $\alpha_{N} = 0$) to $+\infty$ (for $\alpha_{N} \to +\infty$). 
Then, let $\tilde{\alpha}_{N}^{\ast}$ be the unique solution of equation \eqref{eq:det_alpha}. 
For given $d$ and $w$, we can obtain a concrete value for $\tilde{\alpha}_{N}^{\ast}$ by solving \eqref{eq:det_alpha}. 
Then, by using the formula \eqref{eq:approx_determ_K}, 
we can also determine $\tilde{K}_{N}^{\ast}$, which is an approximate value of $K_{N}^{\ast}$, as follows:
\begin{align}
\label{eq:det_K}
\tilde{K}_{N}^{\ast} = K(\tilde{\alpha}_{N}^{\ast}) 
= - \log w(\tilde{\alpha}_{N}^{\ast}) - \frac{2d}{\pi} \frac{w'(\tilde{\alpha}_{N}^{\ast})}{w(\tilde{\alpha}_{N}^{\ast})}. 
\end{align}

\begin{ex}
\label{ex:SE}
Single exponential weight functions. 
For real numbers $\beta > 0$ and $\rho > 0$, we consider weight functions $w$ with 
\begin{align}
w(x) = \mathrm{O}(\exp(-(\beta |x|)^{\rho}))\quad (|x| \to \infty).
\end{align}
For simplicity, we consider the case that $w(x) = \exp(-(\beta |x|)^{\rho})$, 
although this $w$ does not always satisfy Assumption \ref{assump:w}%
\footnote{For example, if $\rho$ is an even integer, Assumption \ref{assump:w} is satisfied.}. 
Then, we pursue approximate formulas for $\tilde{\alpha}_{N}^{\ast}$ and $\tilde{K}_{N}^{\ast}$. 
It follows from 
\begin{align}
\frac{w'(x)}{w(x)} 
= 
\begin{cases}
- \beta^{\rho} \rho\, x^{\rho - 1} & (x > 0), \\
\beta^{\rho} \rho\, |x|^{\rho - 1} & (x < 0),
\end{cases}
\label{eq:SE_wp_w}
\end{align}
that 
\begin{align}
& -\frac{1}{\pi d} \int_{- \alpha_{N}}^{\alpha_{N}} \frac{x\, w'(x)}{w(x)} \d x
= \frac{2 \beta^{\rho} \rho}{\pi d (\rho + 1)} \alpha_{N}^{\rho + 1}, \\
& - \frac{4(\pi \alpha_{N} + 2d)}{\pi^{3}} \frac{w'(\alpha_{N})}{w(\alpha_{N})} 
= 
\frac{4\beta^{\rho} \rho\, \alpha_{N}^{\rho - 1} (\pi \alpha_{N} + 2d) }{\pi^{3}}. 
\end{align}
Using these expressions and \eqref{eq:det_alpha}, 
we obtain the equation 
\begin{align}
\frac{2 \beta^{\rho} \rho}{\pi d (\rho + 1)} \alpha_{N}^{\rho + 1}
+ 
\frac{4 \beta^{\rho} \rho\, \alpha_{N}^{\rho - 1} (\pi \alpha_{N} + 2d) }{\pi^{3}}
= 2(N+1)
\label{eq:SE_alpha_eq}
\end{align}
which determines $\tilde{\alpha}_{N}^{\ast}$. 
In order to obtain an asymptotic form of $\tilde{\alpha}_{N}^{\ast}$, 
we neglect the second term of the LHS in \eqref{eq:SE_alpha_eq}. 
Thus, we have
\begin{align}
\tilde{\alpha}_{N}^{\ast} 
\approx
\left(\frac{\pi d (\rho+1) (N+1)}{\beta^{\rho} \rho} \right)^{\frac{1}{\rho+1}} \qquad (N \to \infty).
\label{eq:SE_alpha_asymp}
\end{align}
By using this expression and \eqref{eq:det_K}, we obtain that
\begin{align}
\tilde{K}_{N}^{\ast}
\approx
\beta^{\rho} \left(\frac{\pi d (\rho+1) (N+1)}{\beta^{\rho} \rho} \right)^{\frac{\rho}{\rho+1}}
+
\frac{2d \beta^{\rho} \rho }{\pi} 
\left(\frac{\pi d (\rho+1) (N+1)}{\beta^{\rho} \rho} \right)^{\frac{\rho-1}{\rho+1}} \qquad (N \to \infty).
\label{eq:SE_K_asymp}
\end{align}
\end{ex}

\begin{ex}
\label{ex:DE}
Double exponential weight functions. 
For real numbers $\beta > 0$ and $\gamma > 0$, we consider weight functions $w$ with 
\begin{align}
w(x) = \mathrm{O}(\exp(-\beta \exp(\gamma |x|)))\quad (|x| \to \infty).
\end{align}
For simplicity, we consider the case that $w(x) = \exp(-\beta \exp(\gamma |x|))$, 
although this $w$ does not satisfy Assumption \ref{assump:w}. 
Then, we pursue approximate formulas for $\tilde{\alpha}_{N}^{\ast}$ and $\tilde{K}_{N}^{\ast}$. 
It follows from 
\begin{align}
\frac{w'(x)}{w(x)} 
= 
\begin{cases}
- \beta \gamma \exp(\gamma x) & (x > 0), \\
\beta \gamma \exp(-\gamma x)  & (x < 0),
\end{cases}
\label{eq:DE_wp_w}
\end{align}
that 
\begin{align}
& -\frac{1}{\pi d} \int_{- \alpha_{N}}^{\alpha_{N}} \frac{x\, w'(x)}{w(x)} \d x
= 
\frac{2 \beta}{\pi d \gamma} 
\left[ (\gamma \alpha_{N} - 1)\, \e^{\gamma \alpha_{N}} + 1 \right], \\
& - \frac{4(\pi \alpha_{N} + 2d)}{\pi^{3}} \frac{w'(\alpha_{N})}{w(\alpha_{N})} 
= \frac{4\beta \gamma (\pi \alpha_{N} + 2d)}{\pi^{3}}\, \e^{\gamma \alpha_{N}}. 
\end{align}
Using these expressions and \eqref{eq:det_alpha}, 
we obtain the equation 
\begin{align}
\left(\frac{2 \beta}{\pi d} + \frac{4\beta \gamma}{\pi^{2}} \right) \alpha_{N}\, \e^{\gamma \alpha_{N}}   
- \left( \frac{2 \beta}{\pi d \gamma} - \frac{8d \beta \gamma}{\pi^{3}} \right) \e^{\gamma \alpha_{N}} 
+ \frac{2 \beta}{\pi d \gamma} 
= 2(N+1)
\label{eq:DE_alpha_eq}
\end{align}
which determines $\tilde{\alpha}_{N}^{\ast}$. 
In order to obtain an asymptotic form of $\tilde{\alpha}_{N}^{\ast}$, 
we neglect the second and third terms of the LHS in \eqref{eq:DE_alpha_eq}. 
Thus, we have
\begin{align}
\tilde{\alpha}_{N}^{\ast} 
\approx
\frac{1}{\gamma}\, W\left( \frac{\pi^{2} d \gamma (N+1)}{(\pi + 2 d \gamma) \beta} \right)
\approx
\frac{1}{\gamma}\, \log \left( \frac{\pi^{2} d \gamma (N+1)}{(\pi + 2 d \gamma) \beta} \right) \qquad (N \to \infty),
\label{eq:DE_alpha_asymp}
\end{align}
where $W$ is Lambert's W function, i.e., the inverse function of $g(x) = x\, \e^{x}$. 
By using this expression and \eqref{eq:det_K}, we obtain that
\begin{align}
\tilde{K}_{N}^{\ast}
& \approx
\left( 1 + \frac{2d \gamma}{\pi} \right) \beta \exp \left( W\left( \frac{\pi^{2} d \gamma (N+1)}{(\pi + 2d \gamma) \beta} \right) \right) \notag \\
& =
\frac{\pi d \gamma (N+1)}{W\left( \frac{\pi^{2} d \gamma (N+1)}{(\pi + 2d \gamma) \beta} \right)}
\approx
\frac{\pi d \gamma (N+1)}{\log \left( \frac{\pi^{2} d \gamma (N+1)}{(\pi + 2d \gamma) \beta} \right)} \qquad (N \to \infty).
\label{eq:DE_K_asymp}
\end{align}
\end{ex}

\subsection{Procedure to design accurate formulas}
\label{sec:Proc_formula}

In view of the considerations discussed in Sections \ref{eq:reform_opt_harmonic}--\ref{sec:Approx_para}, 
we can establish a procedure to design an accurate formula for each $\boldsymbol{H}^{\infty}(\mathcal{D}_{d}, w)$ as follows.

\begin{enumerate}
\item
For given $d$, $w$, and $N$, determine $\tilde{\alpha}_{N}^{\ast}$
by solving the equation \eqref{eq:det_alpha}. 
Furthermore, determine $\tilde{K}_{N}^{\ast}$ using \eqref{eq:det_K}.

\item 
Compute $\tilde{\nu}_{N}(x)$ for $x \in [-\tilde{\alpha}_{N}, \tilde{\alpha}_{N}]$
using the inverse Fourier transform of $\mathcal{F}[\tilde{\nu}_{N}]$ in \eqref{eq:FT_nu_N}. 

\item
Compute the indefinite integral 
$\displaystyle I[\tilde{\nu}_{N}](x) := \int_{0}^{x} \tilde{\nu}_{N}(t)\, \d t$ for $x \in [-\tilde{\alpha}_{N}, \tilde{\alpha}_{N}]$.

\item
Compute $I[\tilde{\nu}_{N}]^{-1}$, which is the inverse function of $I[\tilde{\nu}_{N}]$. 

\item
Generate sampling points $a_{i}$ as $a_{i} = I[\tilde{\nu}_{N}]^{-1}(i)\ (i=-N, \ldots, N)$. 

\item
Obtain an approximation formula $\tilde{f}_{N}$ of $f$ for $x \in \mathbf{R}$ as 
\begin{align}
\tilde{f}_{N}(x) 
& := 
\sum_{j = -N}^{N} f(a_{j}) \, 
\frac{B_{N; j}(x; \{ a_{i} \}, \mathcal{D}_{d})\, w(x)}{B_{N; j}(a_{j}; \{ a_{i} \}, \mathcal{D}_{d})\, w(a_{j})} \, 
\frac{4d}{\pi} T_{d}'(a_{j} - x)\notag \\
& =
\sum_{j = -N}^{N} f(a_{j})\,
\frac{ w(x) 
\displaystyle
\prod_{\begin{subarray}{c} -N\leq i \leq N,\\ i \neq j \end{subarray} } 
\tanh \left( \frac{\pi}{4d} (x - a_{i}) \right)}
{\displaystyle
w(a_{j})
\prod_{\begin{subarray}{c} -N\leq i \leq N,\\ i \neq j \end{subarray} } 
\tanh \left( \frac{\pi}{4d} (a_{j} - a_{i}) \right)} 
\, \mathrm{sech}^{2} \left( \frac{\pi}{4d} (a_{j} - x) \right). 
\label{eq:final_approx_formula}
\end{align}
\end{enumerate}

%------------------------------
\section{Error estimate}
\label{sec:Err}

\subsection{General error estimate}

Here, we estimate $\sup_{x \in \mathbf{R}} | f(x) - \tilde{f}_{N}(x) |$, 
which is the error of the approximation of $f$ by $\tilde{f}_{N}$ in \eqref{eq:final_approx_formula}.  
For this purpose, we first note that 
\begin{align}
\sup_{x \in \mathbf{R}} 
\left| f(x) - \tilde{f}_{N}(x) \right|
& \leq
\| f \| \, \sup_{x \in \mathbf{R}} 
\left|
w(x)\, B_{N} (x; \{ a_{i} \}, \mathcal{D}_{d})
\right| \notag \\
& = 
\| f \| \, \sup_{x \in \mathbf{R}} 
\left|
w(x)\, \prod_{j = -N}^{N} \tanh \left( \frac{\pi}{4d} (x - a_{j}) \right) 
\right|, \label{eq:ErrorUpper}
\end{align}
where 
\begin{align}
\| f \| = \sup_{z \in \mathcal{D}_{d}} \left| \frac{f(z)}{w(z)} \right|. 
\end{align}
This fact is derived in the proof of 
%\cite[Lemma 4.3]{bib:Sugihara_NearOpt_2003}. 
Lemma 4.3 in \cite{bib:Sugihara_NearOpt_2003}. 
Furthermore, we will prove the following lemma, which provides an estimate of 
the difference between the discrete potential \eqref{eq:minus_G_pot_rewrited}
and its continuous counterpart \eqref{eq:minus_G_pot_approx}. 

\begin{lem}
\label{lem:GrPotDiscError_mod}
Let $a: [-N-1, N+1] \to \mathbf{R}$ be a $C^{1}$ strictly monotone increasing function. 
Assume that its inverse function $b = a^{-1}$ satisfies
\begin{align}
\max_{x \in [a(-N-1), a(N+1)]} |b'(x)| \leq c\, N^{\lambda}
\label{eq:b_prime_max}
\end{align}
for some constants $c > 0$ and $\lambda > 0$ that are independent of $N$. 
Then, for sufficiently large $N$, we have
\begin{align}
& \sum_{j = -N}^{N} \log \left| \tanh \left( \frac{\pi}{4d} (x - a(j)) \right) \right| \notag \\
& \leq 
\int_{a(-N-1)}^{a(N+1)} \log \left| \tanh \left( \frac{\pi}{4d} (x - z) \right) \right| b'(z) \, \d z + c' \log N + c'',
\label{eq:DiscPotUpper} 
\end{align}
where $c'$ and $c''$ are constants depending only on $c$, $d$, and $\lambda$.
\end{lem}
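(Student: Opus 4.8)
The plan is to pass to the variable $t$ via the substitution $z = a(t)$, which turns the right-hand side into a clean sum-versus-integral comparison. Since $b = a^{-1}$ gives $b'(a(t)) = 1/a'(t)$, the change of variables yields
\[
\int_{a(-N-1)}^{a(N+1)} \log\left|\tanh\left(\frac{\pi}{4d}(x-z)\right)\right| b'(z)\,\d z = \int_{-N-1}^{N+1} F(t)\,\d t, \qquad F(t) := \log\left|\tanh\left(\frac{\pi}{4d}(x - a(t))\right)\right|,
\]
so that \eqref{eq:DiscPotUpper} reduces to proving $\sum_{j=-N}^{N} F(j) \le \int_{-N-1}^{N+1} F(t)\,\d t + c'\log N + c''$. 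First I would record the shape of $F$: because $a$ is increasing and $u \mapsto \log|\tanh((\pi/(4d))u)|$ is even, decreasing on $(-\infty,0)$ and increasing on $(0,\infty)$ with a $-\infty$ spike at the origin, the composite $F$ is decreasing on $[-N-1, t_0]$ and increasing on $[t_0, N+1]$, where $t_0$ is the unique point with $a(t_0) = x$ (when $x \in [a(-N-1), a(N+1)]$). Moreover $F \le 0$ throughout.

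Next I would exploit this unimodality to compare the sum with the integral interval by interval. Writing $k = \lfloor t_0 \rfloor$, for each $j \le k$ the function $F$ is decreasing on $[j-1,j]$, so $F(j) \le \int_{j-1}^{j} F(t)\,\d t$; for each $j \ge k+1$ it is increasing on $[j,j+1]$, so $F(j) \le \int_{j}^{j+1} F(t)\,\d t$. These unit intervals are pairwise disjoint and their union is $[-N-1, N+1] \setminus (k, k+1)$, whence
\[
\sum_{j=-N}^{N} F(j) \le \int_{-N-1}^{N+1} F(t)\,\d t - \int_{k}^{k+1} F(t)\,\d t.
\]
Since $F \le 0$, the last term is a nonnegative correction, so it remains only to bound $\int_{k}^{k+1}(-F(t))\,\d t$ by $c'\log N + c''$.

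The crucial point, and the main obstacle, is that this leftover bound must be carried out in the $t$-variable: substituting back to $z$ and using $b' \le cN^{\lambda}$ together with the finite integral $\int_{\mathbf{R}} (-\log|\tanh((\pi/(4d))s)|)\,\d s$ would only give an $\mathrm{O}(N^{\lambda})$ bound, which is far too weak. Instead I would use hypothesis \eqref{eq:b_prime_max} in the form of a uniform lower bound on $a'$: since $a'(t) = 1/b'(a(t)) \ge 1/(c N^{\lambda})$ on $[-N-1, N+1]$, monotonicity gives $|x - a(t)| = |a(t_0) - a(t)| \ge (cN^{\lambda})^{-1}|t - t_0|$. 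Combining this with the elementary inequality $-\log|\tanh w| \le \max\{\log(1/|w|), 0\} + C_0$ (valid for all $w \ne 0$ with $C_0 = \log\coth 1$, since $|\tanh w|/|w|$ is bounded below on $[0,1]$ and $|\tanh w| \ge \tanh 1$ for $|w| \ge 1$) produces a pointwise estimate whose integrand splits into a term of size $\lambda\log N + \mathrm{O}(1)$ and the integrable singularity $\max\{\log(1/|t-t_0|),0\}$, whose integral over a unit interval containing $t_0$ is at most $2$. Integrating over $[k,k+1]$ then gives $\int_{k}^{k+1}(-F)\,\d t \le \lambda\log N + c''$, which is exactly the required form.

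Finally I would dispose of the boundary cases. If $t_0 \notin [-N, N]$ (the spike lying near or beyond an endpoint, as when $x \notin [a(-N-1), a(N+1)]$), the same interval assignment applies with all intervals pushed to one side, leaving a single endpoint interval whose contribution is bounded exactly as above, and is in fact $\mathrm{O}(1)$ when $F$ has no singularity there; if $t_0$ is itself an integer, the term $F(t_0) = -\infty$ makes the left-hand side $-\infty$ and the inequality trivial. The only genuinely delicate ingredient is the $t$-space leftover bound, where the improvement from $\mathrm{O}(N^{\lambda})$ to $\mathrm{O}(\log N)$ rests precisely on controlling the coefficient of the logarithmic singularity through the uniform lower bound on $a'$.
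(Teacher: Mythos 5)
Your proposal is correct, and its skeleton agrees with the paper's: after your substitution $z=a(t)$, the cell-by-cell monotone comparison $F(j)\le\int_{j-1}^{j}F(t)\,\d t$ (resp.\ $F(j)\le\int_{j}^{j+1}F(t)\,\d t$) is exactly the paper's inequality \eqref{eq:LTz_L} multiplied by $b'$ and integrated over one cell using $\int_{a(l)}^{a(l+1)}b'(z)\,\d z=1$, and both arguments end with the full integral minus one exceptional unit cell containing the singular point, as in \eqref{eq:SumLT_Upper_2}. Where you genuinely depart from the paper is in the estimate of that exceptional cell. The paper stays in the $z$-variable and splits the cell by the distance threshold $|x-z|<N^{-\lambda}$: on the near part it uses $\tanh(C_{d}t)\ge (C_{d}/2)t$ for $0\le t\le t_{d}$ (which forces the auxiliary threshold $N\ge N_{d}$) together with $b'\le cN^{\lambda}$ and an explicit integration, and on the far part it uses the unit cell mass; this produces the constant $(2c+1)\lambda$ in front of $\log N$ in \eqref{eq:SumLT_Upper_final}. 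You instead convert hypothesis \eqref{eq:b_prime_max} into the dual lower bound $a'(t)=1/b'(a(t))\ge 1/(cN^{\lambda})$, deduce $|x-a(t)|\ge |t-t_{0}|/(cN^{\lambda})$, and integrate the logarithmic singularity exactly over the unit cell via $-\log|\tanh w|\le \max\{\log(1/|w|),0\}+\log\coth 1$ (valid because $t/\tanh t$ is increasing, so $|w|/|\tanh w|\le\coth 1$ on $|w|\le 1$). This buys a cleaner argument with no near/far split, no auxiliary thresholds $t_{d},N_{d}$, and a slightly sharper leading constant ($c'=\lambda$ versus the paper's $(2c+1)\lambda$), while using the hypothesis in an equivalent form; your identification of the $z$-space ``integrable kernel'' shortcut as yielding only $\mathrm{O}(N^{\lambda})$ is also exactly the trap the paper's threshold split is designed to avoid. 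Your boundary treatment ($x$ outside the node range, $t_{0}$ an integer) matches the paper's second case in substance. Two bookkeeping points to make explicit in a final write-up: the step $\max\{A+B,0\}\le A+\max\{B,0\}$ with $A=\lambda\log N+\log(c/C_{d})$, $C_{d}=\pi/(4d)$, needs $A\ge 0$, which holds precisely in the lemma's regime of sufficiently large $N$; and when $t_{0}\in[N,N+1]$ or $t_{0}\in[-N-1,-N)$ the exceptional cell $[k,k+1]$ with $k=\lfloor t_{0}\rfloor$ should be clamped to lie inside $[-N-1,N+1]$, as your one-sided remark implicitly does.
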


\noindent
We prove this lemma in Section \ref{sec:proof_GrPotDiscError_mod}. 
Now, under some assumptions, we can estimate the error $\sup_{x \in \mathbf{R}} | f(x) - \tilde{f}_{N}(x) |$. 
\begin{thm}
\label{thm:general_error_estimate}
Let the weight function $w$ satisfy Assumptions \ref{assump:w}--\ref{assump:w_convex}. 
Furthermore, suppose that for each positive integer $N$ 
the sampling points $a_{i}$ are obtained using the procedure presented in Section \ref{sec:Proc_formula}, 
and that the function $\tilde{\nu}_{N}$ obtained in that procedure satisfies a counterpart of \eqref{eq:b_prime_max}, i.e., 
\begin{align}
\max_{x \in [-\tilde{\alpha}_{N}^{\ast}, \tilde{\alpha}_{N}^{\ast}]} |\tilde{\nu}_{N}(x)| \leq c\, N^{\lambda}. 
\label{eq:nu_max}
\end{align}
Then, for any $f \in \boldsymbol{H}^{\infty}(\mathcal{D}_{d}, w)$, 
the approximation $\tilde{f}_{N}$ given by \eqref{eq:final_approx_formula} satisfies
\begin{align}
\sup_{x \in \mathbf{R}} 
\left| f(x) - \tilde{f}_{N}(x) \right|
\leq 
C\, \| f \| \, N^{c'}
\exp \left( -\tilde{K}_{N}^{\ast} \right)
\end{align}
for sufficiently large $N$, where $C$ and $c'$ are constants that are independent of $f$ and $N$.
\end{thm}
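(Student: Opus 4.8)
The plan is to begin from the upper bound \eqref{eq:ErrorUpper}, which already reduces the error to the single quantity $\sup_{x\in\mathbf{R}}\bigl|w(x)\prod_{j=-N}^{N}\tanh(\tfrac{\pi}{4d}(x-a_{j}))\bigr|$. Since $w>0$ on $\mathbf{R}$, I would take logarithms and rewrite this as $\exp\!\bigl(\sup_{x\in\mathbf{R}}(V_{\mathcal{D}_{d}}^{\sigma_{a}}(x)+\log w(x))\bigr)$, where $V_{\mathcal{D}_{d}}^{\sigma_{a}}$ is the discrete potential \eqref{eq:minus_G_pot} attached to the sampling points $\{a_{j}\}$. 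The whole estimate then comes down to a uniform bound on $V_{\mathcal{D}_{d}}^{\sigma_{a}}(x)+\log w(x)$, and the strategy is to compare this discrete potential against its continuous counterpart $V_{\mathcal{D}_{d}}^{\mu[\tilde\nu_{N}]}$, for which the construction of Section \ref{eq:reform_opt_harmonic} supplies an explicit global bound.

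To carry out the comparison I would set $a:=I[\tilde\nu_{N}]^{-1}$. Because $\tilde\nu_{N}$ is an even nonnegative density with $\int_{-\tilde\alpha_{N}^{\ast}}^{\tilde\alpha_{N}^{\ast}}\tilde\nu_{N}=2(N+1)$, the map $I[\tilde\nu_{N}]$ sends $[-\tilde\alpha_{N}^{\ast},\tilde\alpha_{N}^{\ast}]$ bijectively onto $[-(N+1),N+1]$, so $a$ is a $C^{1}$ strictly increasing function on $[-N-1,N+1]$ with $a(j)=a_{j}$ for $j=-N,\dots,N$ by \eqref{eq:gen_sampling_points}. Its inverse $b=a^{-1}=I[\tilde\nu_{N}]$ satisfies $b'=\tilde\nu_{N}$, so that hypothesis \eqref{eq:b_prime_max} of Lemma \ref{lem:GrPotDiscError_mod} is precisely the assumption \eqref{eq:nu_max}. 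Invoking that lemma then yields, for sufficiently large $N$,
\[
V_{\mathcal{D}_{d}}^{\sigma_{a}}(x)\le\int_{-\tilde\alpha_{N}^{\ast}}^{\tilde\alpha_{N}^{\ast}}\log\Bigl|\tanh\bigl(\tfrac{\pi}{4d}(x-z)\bigr)\Bigr|\,\tilde\nu_{N}(z)\,\d z+c'\log N+c''=V_{\mathcal{D}_{d}}^{\mu[\tilde\nu_{N}]}(x)+c'\log N+c'',
\]
where I have used $a(\pm(N+1))=\pm\tilde\alpha_{N}^{\ast}$ and $b'=\tilde\nu_{N}$ to recognize the integral as the continuous potential \eqref{eq:minus_G_pot_approx}. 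The constants $c',c''$ depend only on $c,d,\lambda$ and are therefore independent of $f$ and $N$.

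It then remains to bound $V_{\mathcal{D}_{d}}^{\mu[\tilde\nu_{N}]}(x)+\log w(x)$ globally, which I would read off from the defining equation of $\tilde\nu_{N}$. By construction $\tilde\nu_{N}$ solves \eqref{eq:base_IE} with $\alpha_{N}=\tilde\alpha_{N}^{\ast}$, so on $[-\tilde\alpha_{N}^{\ast},\tilde\alpha_{N}^{\ast}]$ one has $V_{\mathcal{D}_{d}}^{\mu[\tilde\nu_{N}]}(x)+\log w(x)=-K(\tilde\alpha_{N}^{\ast})=-\tilde K_{N}^{\ast}$, while for $x\in\mathbf{R}\setminus[-\tilde\alpha_{N}^{\ast},\tilde\alpha_{N}^{\ast}]$ one has $V_{\mathcal{D}_{d}}^{\mu[\tilde\nu_{N}]}(x)+\log w(x)=\tilde\upsilon_{N}(x,0)+\log w(x)\le-\tilde K_{N}^{\ast}$ by the concavity inequality \eqref{eq:Laplace_ineq_approx_v_N}, which in turn rests on Assumption \ref{assump:w_convex}. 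Hence $V_{\mathcal{D}_{d}}^{\mu[\tilde\nu_{N}]}(x)+\log w(x)\le-\tilde K_{N}^{\ast}$ for every $x\in\mathbf{R}$. Combining this with the previous display and taking the supremum gives $\sup_{x\in\mathbf{R}}(V_{\mathcal{D}_{d}}^{\sigma_{a}}(x)+\log w(x))\le-\tilde K_{N}^{\ast}+c'\log N+c''$; exponentiating and substituting into \eqref{eq:ErrorUpper} produces the claimed bound with $C=\e^{c''}$ and the stated exponent $c'$.

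The main obstacle I anticipate is the discrete-to-continuous comparison, i.e.\ confirming that Lemma \ref{lem:GrPotDiscError_mod} genuinely applies. This needs $a=I[\tilde\nu_{N}]^{-1}$ to be $C^{1}$ and strictly increasing on the \emph{full} interval $[-N-1,N+1]$, and the delicate point is the behavior near the endpoints $\pm(N+1)$, where $\tilde\nu_{N}$ is expected to vanish (Proposition \ref{prop:smooth_V} is stated for densities with $\nu_{N}(\pm\alpha_{N})=0$), so that $a'=1/\tilde\nu_{N}$ degenerates there. One must verify that the integration limits $a(\pm(N+1))=\pm\tilde\alpha_{N}^{\ast}$ are actually attained and that the logarithmic error term survives this degeneracy; this is exactly the technical content absorbed into Lemma \ref{lem:GrPotDiscError_mod}, which is why the total mass was fixed at $2(N+1)$ rather than $2N+1$. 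Everything else in the argument is bookkeeping: identifying the two potentials, applying the pointwise bound \eqref{eq:Laplace_ineq_approx_v_N}, and exponentiating.
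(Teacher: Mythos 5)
Your proposal is correct and follows essentially the same route as the paper's own proof: apply Lemma \ref{lem:GrPotDiscError_mod} (with $b=I[\tilde\nu_{N}]$, $b'=\tilde\nu_{N}$, so that \eqref{eq:nu_max} supplies hypothesis \eqref{eq:b_prime_max}) to bound the discrete potential by $V_{\mathcal{D}_{d}}^{\mu[\tilde\nu_{N}]}(x)+c'\log N+c''$, bound the continuous weighted potential by $-\tilde{K}_{N}^{\ast}$ via \eqref{eq:base_IE} and \eqref{eq:Laplace_ineq_approx_v_N}, and exponentiate into \eqref{eq:ErrorUpper}. If anything, you make explicit two points the paper leaves implicit --- the identification $a(\pm(N+1))=\pm\tilde\alpha_{N}^{\ast}$ justifying the integration limits, and the case split equality-inside/inequality-outside behind the bound $V_{\mathcal{D}_{d}}^{\mu[\tilde\nu_{N}]}(x)+\log w(x)\leq-\tilde{K}_{N}^{\ast}$ --- so no further changes are needed.
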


\noindent
\begin{proof}
From \eqref{eq:nu_max} and Lemma \ref{lem:GrPotDiscError_mod}, it follows that 
\begin{align}
& \sum_{j = -N}^{N} \log \left| \tanh \left( \frac{\pi}{4d} (x - a_{j}) \right) \right| \notag \\
& \leq 
\int_{-\tilde{\alpha}_{N}^{\ast}}^{\tilde{\alpha}_{N}^{\ast}} 
\log \left| \tanh \left( \frac{\pi}{4d} (x - z) \right) \right|\, \d \tilde{\nu}_{N}(z) + c' \log N + c'', \notag \\
& \leq -\log w(x) - \tilde{K}_{N}^{\ast} + c' \log N + c''. 
\end{align}
By combining this estimate and \eqref{eq:ErrorUpper}, we obtain that
\begin{align}
& \sup_{x \in \mathbf{R}} 
\left| f(x) - \tilde{f}_{N}(x) \right| \notag \\
& \leq 
\| f \|
\sup_{x \in \mathbf{R}}
\left[
\exp
\left(
\log w(x) 
+
\int_{-\tilde{\alpha}_{N}^{\ast}}^{\tilde{\alpha}_{N}^{\ast}} 
\log \left| \tanh \left( \frac{\pi}{4d} (x - z) \right) \right|\, \d \tilde{\nu}_{N}(z) + c' \log N + c''
\right)
\right] \notag \\
& \leq
\| f \|
\sup_{x \in \mathbf{R}}
\left[
\exp
\left(
- \tilde{K}_{N}^{\ast} + c' \log N + c''
\right)
\right] 
=
C \| f \| \, 
N^{c'}
\exp
\left(
- \tilde{K}_{N}^{\ast}
\right), 
\notag 
\end{align}
which concludes our proof. 
\end{proof}

\subsection{Examples of error estimate}

Now, we present the explicit forms of the error estimate presented in Theorem \ref{eq:nu_max} 
for the weight functions $w$ given in Examples \ref{ex:SE}--\ref{ex:DE}.
For this, we require one additional assumption to confirm the condition \eqref{eq:nu_max}. 

\begin{assump}
\label{assump:nu_max_x_0}
The function $| \tilde{\nu}_{N}(x) |$ takes its maximum value at $x = 0$, 
where $\tilde{\nu}_{N}$ is obtained in the procedure presented in Section \ref{sec:Proc_formula}. 
\end{assump}

\begin{rem}
According to the conditions \eqref{eq:FT_nu_N} and \eqref{eq:det_alpha}, 
the validity of Assumption \ref{assump:nu_max_x_0} depends on the real constant $d$ and the weight $w$. 
Therefore, it is preferable to formulate a sufficient condition for the statement of Assumption \ref{assump:nu_max_x_0} in terms of $d$ and $w$. 
However, we do not derive such a condition here, and leave this as a theme for a future study. 
In addition, as shown in Section \ref{sec:CompSample}, 
we numerically confirm the validity of Assumption \ref{assump:nu_max_x_0} in practical applications. 
\end{rem}

\noindent
Under Assumption \ref{assump:nu_max_x_0}, we can provide an estimate to confirm the condition \eqref{eq:nu_max}. 

\begin{lem}
\label{lem:mu_prime_estim}
Suppose that the weight function $w$ satisfies Assumptions \ref{assump:w}--\ref{assump:w_convex}. 
Then, letting $v$ be defined by $v(x) = w'(x)/w(x)$, we have
\begin{align}
& \max_{x \in [-\tilde{\alpha}_{N}^{\ast}, \tilde{\alpha}_{N}^{\ast}]} |\tilde{\nu}_{N}(x)| \notag \\
& \leq 
\frac{1}{\tanh(\pi d/(2 \tilde{\alpha}_{N}^{\ast}))}
\left[
- \frac{1}{4 (\tilde{\alpha}_{N}^{\ast})^{2}} \int_{- \tilde{\alpha}_{N}^{\ast}}^{ \tilde{\alpha}_{N}^{\ast} } x\, v(x)\, \mathrm{d}x 
+ \left( \frac{1}{2} + \frac{8}{\pi^{3}} \right) |v(\tilde{\alpha}_{N}^{\ast})| 
\right. \notag \\ 
& \quad \left.
+ 
\frac{4}{\pi^{3}}
\left(
\tilde{\alpha}_{N}^{\ast}\, | v'(\tilde{\alpha}_{N}^{\ast}) | 
+ (\tilde{\alpha}_{N}^{\ast})^{2} \max_{x \in [-\tilde{\alpha}_{N}^{\ast}, \tilde{\alpha}_{N}^{\ast}]} |v''(x)|
\right) 
\right] \notag \\
& \quad + 
\left( \frac{2}{\pi^{2}} + \frac{8 d}{\pi^{3} \tilde{\alpha}_{N}^{\ast}} \right) \frac{|v(\tilde{\alpha}_{N}^{\ast})|}{\tanh(\pi/2)}.
\end{align}
\end{lem}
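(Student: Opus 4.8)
The plan is to reduce the estimate to a bound on $|\tilde{\nu}_N(0)|$ and then to analyse the inverse Fourier transform of \eqref{eq:FT_nu_N} at the origin. By Assumption \ref{assump:nu_max_x_0} the maximum of $|\tilde{\nu}_N|$ over $[-\tilde{\alpha}_N^{\ast},\tilde{\alpha}_N^{\ast}]$ is attained at $x=0$, so it suffices to bound $|\tilde{\nu}_N(0)|$. Since Theorem \ref{thm:FT_nu_N} ensures that $\mathcal{F}[\tilde{\nu}_N]\in L^{2}(\mathbf{R})$ admits an inverse Fourier transform, I would write $\tilde{\nu}_N(0)=\frac{1}{2\pi}\int_{-\infty}^{\infty}\mathcal{F}[\tilde{\nu}_N](\omega)\,\d\omega$ and split this into two integrals $I_{1}$ and $I_{2}$, corresponding respectively to the first term of \eqref{eq:FT_nu_N} (the one containing $\int_{-\alpha_N}^{\alpha_N}(w'/w)\e^{-\i\omega x}\,\d x$) and to the boundary term proportional to $v(\tilde{\alpha}_N^{\ast})$. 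These two pieces will produce the two blocks of the claimed bound, carrying the prefactors $1/\tanh(\pi d/(2\tilde{\alpha}_N^{\ast}))$ and $1/\tanh(\pi/2)$, respectively. Note that Assumptions \ref{assump:w}--\ref{assump:w_convex} make $w$ analytic and non-vanishing on $\mathbf{R}$, so $v=w'/w$ is smooth and the quantities $v(\tilde{\alpha}_N^{\ast})$, $v'(\tilde{\alpha}_N^{\ast})$, $\max|v''|$ are well defined.

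For $I_{1}$ I would first exploit Assumption \ref{assump:w_even}: since $w$ is even, $v=w'/w$ is odd, whence $\int_{-\alpha_N}^{\alpha_N}v(x)\e^{-\i\omega x}\,\d x=-2\i\int_{0}^{\alpha_N}v(x)\sin(\omega x)\,\d x$, and the integrand of $I_{1}$ becomes real and even in $\omega$. Integrating by parts twice in $x$, using $v(0)=0$, I would express $\int_{0}^{\alpha_N}v\sin(\omega x)\,\d x$ through the boundary data $v(\tilde{\alpha}_N^{\ast})$ and $v'(\tilde{\alpha}_N^{\ast})$ plus a remainder integral against $v''$; the $\omega\to 0$ behaviour already recorded in \eqref{eq:FTbp_first_zero} isolates the leading contribution as the term $-\frac{1}{4(\tilde{\alpha}_N^{\ast})^{2}}\int_{-\tilde{\alpha}_N^{\ast}}^{\tilde{\alpha}_N^{\ast}}x\,v(x)\,\d x$. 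The weight $1/\tanh(d\omega)$ is decreasing for $\omega>0$, and the characteristic frequency of $\int_{0}^{\alpha_N}v\sin(\omega x)\,\d x$ is $\omega\sim\pi/(2\tilde{\alpha}_N^{\ast})$; controlling $1/\tanh(d\omega)$ by its value $1/\tanh(\pi d/(2\tilde{\alpha}_N^{\ast}))$ at this scale factors out the stated prefactor, while the triangle inequality applied to the boundary and remainder contributions yields the coefficients $\bigl(\tfrac12+\tfrac{8}{\pi^{3}}\bigr)|v(\tilde{\alpha}_N^{\ast})|$ and $\tfrac{4}{\pi^{3}}\bigl(\tilde{\alpha}_N^{\ast}|v'(\tilde{\alpha}_N^{\ast})|+(\tilde{\alpha}_N^{\ast})^{2}\max|v''|\bigr)$.

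For $I_{2}$ the integrand already has the explicit even form $\frac{\pi\sin(\tilde{\alpha}_N^{\ast}\omega)+2d\omega\cos(\tilde{\alpha}_N^{\ast}\omega)}{(\pi^{2}+4d^{2}\omega^{2})\tanh(d\omega)}$, which is bounded at $\omega=0$ by \eqref{eq:FTbp_second_zero} and decays only like $1/\omega$ at infinity. I would bound $I_{2}$ by splitting the $\omega$-range, using the explicit finite value at the origin near $\omega=0$ and a tail estimate for large $|\omega|$; the natural frequency scale $\pi/(2d)$ set by the Lorentzian factor $1/(\pi^{2}+4d^{2}\omega^{2})$ is what produces the constant $1/\tanh(\pi/2)$, and keeping the remaining elementary factors yields the two coefficients $\tfrac{2}{\pi^{2}}$ and $\tfrac{8d}{\pi^{3}\tilde{\alpha}_N^{\ast}}$ multiplying $|v(\tilde{\alpha}_N^{\ast})|$. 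Collecting $|I_{1}|$ and $|I_{2}|$ through the triangle inequality then gives the asserted bound on $|\tilde{\nu}_N(0)|=\max_{x}|\tilde{\nu}_N(x)|$.

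The main obstacle is that neither $\omega$-integral converges absolutely: $I_{1}$ is only conditionally convergent at infinity, and the repeated integration by parts introduces spurious $1/\omega^{2}$ singularities at the origin that must cancel across the resulting pieces, consistently with the finiteness of the limit \eqref{eq:FTbp_first_zero}; likewise the integrand of $I_{2}$ decays only like $1/\omega$, so its absolute integral diverges and only the oscillatory cancellation renders it finite. Consequently the estimates cannot be obtained by naive absolute bounds but require principal-value or contour arguments that respect these cancellations. The genuinely delicate part is to track the exact numerical constants, such as $\tfrac12+\tfrac{8}{\pi^{3}}$ and $\tfrac{4}{\pi^{3}}$, through these evaluations so as to reproduce the stated inequality.
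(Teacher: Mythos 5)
Your plan follows the same skeleton as the paper's proof: reduce to $|\tilde{\nu}_{N}(0)|$ via Assumption \ref{assump:nu_max_x_0}, write $\tilde{\nu}_{N}(0)$ as $\frac{1}{2\pi}\int_{-\infty}^{\infty}\mathcal{F}[\tilde{\nu}_{N}](\omega)\,\d\omega$, split the integrand into the $v$-integral piece and the boundary piece (the paper's $p_{w}$ and $q_{w}$), integrate by parts twice in $x$, and split the frequency axis at $\pi/(2\tilde{\alpha}_{N}^{\ast})$ and $\pi/(2d)$. However, the proposal stops exactly at the two points where the paper's proof contains its substantive ideas, and the fallback you invoke --- ``principal-value or contour arguments'' --- is neither what the paper does nor likely to reproduce the stated constants. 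First gap: the conditionally convergent tails. The paper handles $\int_{\pi/(2\alpha)}^{\infty}\frac{2\cos(\alpha\omega)}{\pi\,\omega\,\tanh(d\omega)}\,v(\alpha)\,\d\omega$ (with $\alpha=\tilde{\alpha}_{N}^{\ast}$), and the analogous piece of $B_{q,2}$ in \eqref{eq:B_q_2}, by a purely elementary alternating-series (Leibniz) bound: partition the ray into half-period intervals on which $\cos(\alpha\omega)$ has constant sign; because the amplitude $1/(\omega\tanh(d\omega))$ (respectively $\frac{2d\,\omega}{(\pi^{2}+4d^{2}\omega^{2})\tanh(d\omega)}$, shown to be decreasing past $\pi/(2d)$) is positive and monotone, the block integrals alternate with decreasing modulus, so the whole tail is bounded by its first block, which evaluates to $\frac{8|v(\alpha)|}{\pi^{2}\tanh(\pi d/(2\alpha))}$ and is precisely the origin of the coefficient $8/\pi^{3}$. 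Without this device (or an equivalent one) your $I_{1}$ and $I_{2}$ tails have no finite bound at all, and no PV regularization by itself yields the explicit numerical coefficients.

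Second gap: the low-frequency block of $I_{1}$, which is where Assumptions \ref{assump:w_even}--\ref{assump:w_convex} enter quantitatively and which you misattribute to the limit \eqref{eq:FTbp_first_zero}. A pointwise limit at $\omega=0$ does not control $\int_{0}^{\pi/(2\alpha)}|p_{w}(\omega)/\tanh(d\omega)|\,\d\omega$, since $1/\tanh(d\omega)\sim 1/(d\omega)$ there. The paper instead uses that $v$ is odd with $v\leq 0$ on $[0,\infty)$ (evenness plus log-concavity of $w$) to show $p_{w}\geq 0$, $p_{w}'\geq 0$ on $[0,\pi/(2\alpha)]$ and $p_{w}''\leq 0$ on $[0,\pi/\alpha]$, whence the uniform linear bound $0\leq p_{w}(\omega)\leq p_{w}'(0)\,\omega$; combined with the monotone increase of $\omega/\tanh(d\omega)$ (inequality \eqref{eq:tanh_monotone}), this gives $B_{p,1}$ in \eqref{eq:B_p_1_ineq} and hence exactly the term $-\frac{1}{4\alpha^{2}\tanh(\pi d/(2\alpha))}\int_{-\alpha}^{\alpha}x\,v(x)\,\d x$. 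This concavity argument is absent from your sketch and cannot be replaced by \eqref{eq:FTbp_first_zero}. Relatedly, your worry that the double integration by parts creates ``spurious $1/\omega^{2}$ singularities at the origin that must cancel'' is moot in the correct argument: the integrated-by-parts representation \eqref{eq:p_w_int_by_parts} is used only on $[\pi/(2\alpha),\infty)$, away from $\omega=0$, while near the origin the concavity bound is used instead, so no cancellation, principal value, or contour deformation is ever needed.
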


\noindent
We prove this lemma in Section \ref{sec:proof_mu_prime_estim}.

\begin{ex}
\label{ex:SE_error}
Error estimate in the case of the single exponential weight functions given in Example \ref{ex:SE}. 
Using \eqref{eq:SE_wp_w} and \eqref{eq:SE_alpha_asymp}, 
we can deduce from Lemma \ref{lem:mu_prime_estim} that 
\begin{align}
\max_{x \in [-\tilde{\alpha}_{N}^{\ast}, \tilde{\alpha}_{N}^{\ast}]} |\tilde{\nu}_{N}(x)| 
\lesssim  (\tilde{\alpha}_{N}^{\ast})^{\rho} 
\lesssim  N^{\frac{\rho}{\rho + 1}}
\end{align}
for sufficiently large $N$. 
Therefore, it follows from \eqref{eq:SE_K_asymp} and Theorem \ref{thm:general_error_estimate} that
\begin{align}
\sup_{x \in \mathbf{R}} 
\left| f(x) - \tilde{f}_{N}(x) \right|
\lesssim
C\, \| f \| \, N^{c'}
\exp \left[ -\beta^{\rho} \left(\frac{\pi d (\rho+1) (N+1)}{\beta^{\rho} \rho} \right)^{\frac{\rho}{\rho+1}} \right].
\label{eq:SE_rough_error}
\end{align}
In particular, in the case that $d=\pi/4$ and $\rho = 1$, 
the argument of the exponential in \eqref{eq:SE_rough_error} is equal to
\begin{align}
-\beta^{\rho} \left(\frac{\pi d (\rho+1) (N+1)}{\beta^{\rho} \rho} \right)^{\frac{\rho}{\rho+1}}
=
-\pi \sqrt{\frac{\beta (N+1)}{2}}. 
\end{align}
This expression is almost the same as the arguments of the exponentials in \eqref{eq:ExactEminSech}, 
which provides an error estimate for Ganelius's formula, which is described in Section~\ref{sec:DefOptApprox}. 
\end{ex}

\begin{ex}
\label{ex:DE_error}
Error estimate in the case of the double exponential weight functions given in Example \ref{ex:DE}. 
Using \eqref{eq:DE_wp_w} and \eqref{eq:DE_alpha_asymp}, 
we deduce from Lemma \ref{lem:mu_prime_estim} that 
\begin{align}
\max_{x \in [-\tilde{\alpha}_{N}^{\ast}, \tilde{\alpha}_{N}^{\ast}]} |\tilde{\nu}_{N}(x)| 
\lesssim  (\tilde{\alpha}_{N}^{\ast})^{3} \, \e^{\gamma \tilde{\alpha}_{N}^{\ast}} 
\lesssim (\tilde{\alpha}_{N}^{\ast})^{2} \, N
\lesssim (\log N)^{2} \, N
\end{align}
for sufficiently large $N$. 
Therefore, it follows from \eqref{eq:DE_K_asymp} and Theorem \ref{thm:general_error_estimate} that
\begin{align}
\sup_{x \in \mathbf{R}} 
\left| f(x) - \tilde{f}_{N}(x) \right|
\lesssim
C\, \| f \| \, N^{c'}
\exp \left[ - 
\frac{\pi d \gamma (N+1)}{\log \left( \frac{\pi^{2} d \gamma (N+1)}{(\pi + 2d \gamma) \beta} \right)} \right].
\label{eq:DE_rough_error}
\end{align}
The rate of this bound is close to that of the upper bound
\begin{align}
C_{d, w} \exp 
\left[ 
-\frac{\pi d \gamma N}{\log(\pi d \gamma N/\beta)}
\right]
\label{eq:Sugihara2003DE-Sinc}
\end{align}
of the error of the DE-Sinc formula \citep[Theorem 3.2 (3.3)]{bib:Sugihara_NearOpt_2003}%
\footnote{We need to note that \cite{bib:Sugihara_NearOpt_2003} uses $N$ 
as the total number of the sampling points. Therefore, 
in \eqref{eq:Sugihara2003DE-Sinc} and \eqref{eq:Sugihara2003min} 
we rewrite the bounds by setting $N = (n-1)/2$, 
where $n$ is the total number of the sampling points.}, 
and worse than that of the lower bound 
\begin{align}
C_{d, w}' \exp 
\left[ 
-\frac{2 \pi d \gamma N}{\log(\pi d \gamma N/\beta)}
\right]
\label{eq:Sugihara2003min}
\end{align}
of the minimum error norm \citep[Theorem 3.2 (3.4)]{bib:Sugihara_NearOpt_2003}.
These facts do not seem to agree with our expectation 
that the bound in \eqref{eq:DE_rough_error} is close to the minimum error norm. 
However, because the computation to derive \eqref{eq:DE_rough_error} is somewhat rough, 
we need to pursue the exact value of the minimum error norm in a more rigorous manner. 
\end{ex}

%------------------------------
\section{Numerical experiments}
\label{sec:NumEx}

For numerical experiments on approximations using our formula $\tilde{f}_{N}(x)$ given by \eqref{eq:final_approx_formula}, 
we chose the functions listed in Table \ref{tab:func_weight}. 
For simplicity, we set $d = \pi/4$. 
The Gaussian weight in row (2) of Table \ref{tab:func_weight} is of a single exponential weight type. 
In fact, by letting $\rho = 2$ in Example \ref{ex:SE}, we have Gaussian weights. 

\begin{table}[H]
\begin{center}
\caption{Functions $f$ in $\boldsymbol{H}^{\infty}(\mathcal{D}_{d}, w)$ 
and weight functions $w$ corresponding to them}
\label{tab:func_weight}
\begin{tabular}{l | l l | l }
$f \in \boldsymbol{H}^{\infty}(\mathcal{D}_{\pi/4}, w)$ & $w$ 
& \begin{minipage}{0.10\linewidth} {\small decay \\ type} \end{minipage} 
& \begin{minipage}{0.28\linewidth} {\small Approximation formula \\ for comparison} \end{minipage} \\
\hline
(1)\ $f(x) = \mathop{\mathrm{sech}}(2x)$ & $f(x)$ & SE & SE-Sinc, Ganelius \\
(2)\ $f(x) = \dfrac{x^2}{(\pi/4)^2 + x^2}\, \e^{-x^{2}} $ & $ \e^{-x^{2}}$ & Gaussian & SE-Sinc, Ganelius \\
(3)\ $f(x) = \mathop{\mathrm{sech}}((\pi/2) \sinh(2x))$ & $f(x)$ & DE & DE-Sinc \\ 
\end{tabular}
\end{center}
\end{table}

We will explain the numerical algorithms for producing the sampling points for our formulas in Section~\ref{sec:NumAlgo}, and 
will present the results of the computations of them in Section~\ref{sec:CompSample}. 
Then, we will present the results of the approximations provided by our formulas in Section~\ref{sec:approx_results}. 

%--------------------------------------------------
\subsection{Numerical algorithms}
\label{sec:NumAlgo}

In this section, we present the numerical algorithms for executing the procedure proposed in Section~\ref{sec:Proc_formula}. 
In Step 1, we apply the Newton method to solve the equation \eqref{eq:det_alpha} for $\tilde{\alpha}_{N}^{\ast}$
using the approximate expression \eqref{eq:SE_alpha_asymp} or \eqref{eq:DE_alpha_asymp} as an initial value. 
To compute the integral 
$\int_{-\alpha_{N}}^{\alpha_{N}} (x\, w'(x)/w(x)) \d x$, 
we apply the mid-point rule with the grid $\{ x_{i} \}$, where
\begin{align}
x_{i} = i\, h_{x} \qquad i = -M+1, -M+2,\ldots, M-1, M
\label{eq:x_grid}
\end{align}
with $M=2^{12}$ and $h_{x} = \tilde{\alpha}_{N}^{\ast}/M$.
In Step 2, 
for the discretization of the inverse Fourier transform of $\mathcal{F}[\tilde{\nu}_{N}](\omega)$, 
we apply the equispaced grid $\{ \omega_{i} \}$ with
\begin{align}
\omega_{i} = i\, h_{\omega} \qquad i = -M+1, -M+2,\ldots, M-1, M, 
\end{align}
where $h_{\omega} = \pi/\tilde{\alpha}_{N}^{\ast}$. 
Then, using the mid-point rule with this grid, 
we compute the approximate values of $\tilde{\nu}_{N}(x)$ for $x = x_{i}$ given by \eqref{eq:x_grid}. 
In addition, we apply a fractional FFT \citep{bib:BailSwarFRFT1991} to speed up the computation of 
the discrete Fourier transform (DFT) without the Nyquist condition about $h_{x}$ and $h_{\omega}$. 
In Step 3, we compute the approximate values of the integral $I[\tilde{\nu}_{N}](x_{i})$
using the standard Euler scheme, i.e., 
\begin{align}
\tilde{I}[\tilde{\nu}_{N}](x_{i+1}) = \tilde{I}[\tilde{\nu}_{N}](x_{i}) + \tilde{\nu}_{N}(x_{i}) \, h_{x}
\qquad i=0,1,\ldots, M-1,
\label{eq:Indefint1stEuler}
\end{align}
where $\tilde{I}[\tilde{\nu}_{N}]$ denotes the approximation of $I[\tilde{\nu}_{N}]$. 
Furthermore, because $\tilde{\nu}_{N}$ is even, 
we compute $\tilde{I}[\tilde{\nu}_{N}](x_{i})$ for $i<0$ as 
\begin{align}
\tilde{I}[\tilde{\nu}_{N}](x_{-j}) = -\tilde{I}[\tilde{\nu}_{N}](x_{j})\qquad j=1,2,\ldots, M.
\label{eq:Inverse3rd}
\end{align}
In Step 4, we apply piecewise third order interpolation\footnote{
We used the Matlab function \texttt{interp1( , , , 'pchip')} of Matlab R2015a.}
to the data $\{ (\tilde{I}[\tilde{\nu}_{N}](x_{i}), x_{i}) \}$
to obtain the inverse of $\tilde{I}[\tilde{\nu}_{N}]$. 

\begin{rem}
\label{rem:orders}
It may seem inconsistent to use 
a first-order Euler scheme for the integral in \eqref{eq:Indefint1stEuler} and 
a third order interpolant for the inverse. 
However, 
we do not care about that because the accuracy of the computed sampling points
does not seem to make so much difference in the performance of the resulting approximation formula
as far as we judge from the numerical results in Section \ref{sec:approx_results}. 
Theoretical investigation about the robustness of the formula against the errors in the sampling points
is an important and interesting issue, which we leave as future work. 
\end{rem}

%--------------------------------------------------
\subsection{Computed sampling points}
\label{sec:CompSample}

We present the computed sampling points for $d = \pi/4$ and the weight functions in Table \ref{tab:func_weight}. 
First, in order to confirm Assumption~\ref{assump:nu_max_x_0} numerically, 
we plot the computed values of $\tilde{\nu}_{N}$. 
Next, we show the computed sampling points and the discrete weighted potential %\eqref{eq:DWP_revisited} 
\begin{align}
\log_{10} w(x) + \sum_{j=-N}^{N} \log_{10} \left| \tanh \left( \frac{\pi}{4d} (x-a_{j}) \right) \right|
\label{eq:DWP_revisited}
\end{align}
for each weight function $w$. 
Because the function~\eqref{eq:DWP_revisited} is the approximation of $\log w(x) + V_{\mathcal{D}_{\pi/4}}^{\mu[\tilde{\nu}_{N}^{\ast}]}(x)$, 
we can expect it to be almost ``flat'' on the interval $[-\tilde{\alpha}_{N}^{\ast}, \tilde{\alpha}_{N}^{\ast}]$ and to decay outside of it. 
The computations of the sampling points were performed 
using Matlab R2015a programs with double precision floating point numbers.
These programs used for the computations are available
on the web page \cite{bib:TanaMatlab2015}. 

The results for the weights in (1), (2), and (3) are presented in 
Figures \ref{fig:samp_SE}, \ref{fig:samp_Gauss}, and \ref{fig:samp_DE}, respectively. 
In each graph (a) from Figures \ref{fig:samp_SE}--\ref{fig:samp_DE}, 
we can observe that the functions $\tilde{\nu}_{N}$ are unimodal and take their maximums at the origin, 
although some outliers appear around the endpoints, particularly in the case of the DE weight. 
Therefore, in the case of the SE and the Gaussian weight, we can confirm Assumption~\ref{assump:nu_max_x_0}. 
We suspect that the outliers are the result of numerical errors. 
As for the discrete weighted functions, 
in graphs (c) and (d) in Figures \ref{fig:samp_SE}--\ref{fig:samp_DE}, 
we can observe that the results are consistent with our expectation for small $N$. 
On the other hand, 
the discrete weighted potentials are warped particularly in the case of the DE weight for large $N$. 
We leave the investigation of these phenomena as a topic for future work. 

%-----
\begin{figure}[H]
\noindent
\begin{minipage}[t]{0.49\linewidth}
\begin{center}
\includegraphics[width=\linewidth]{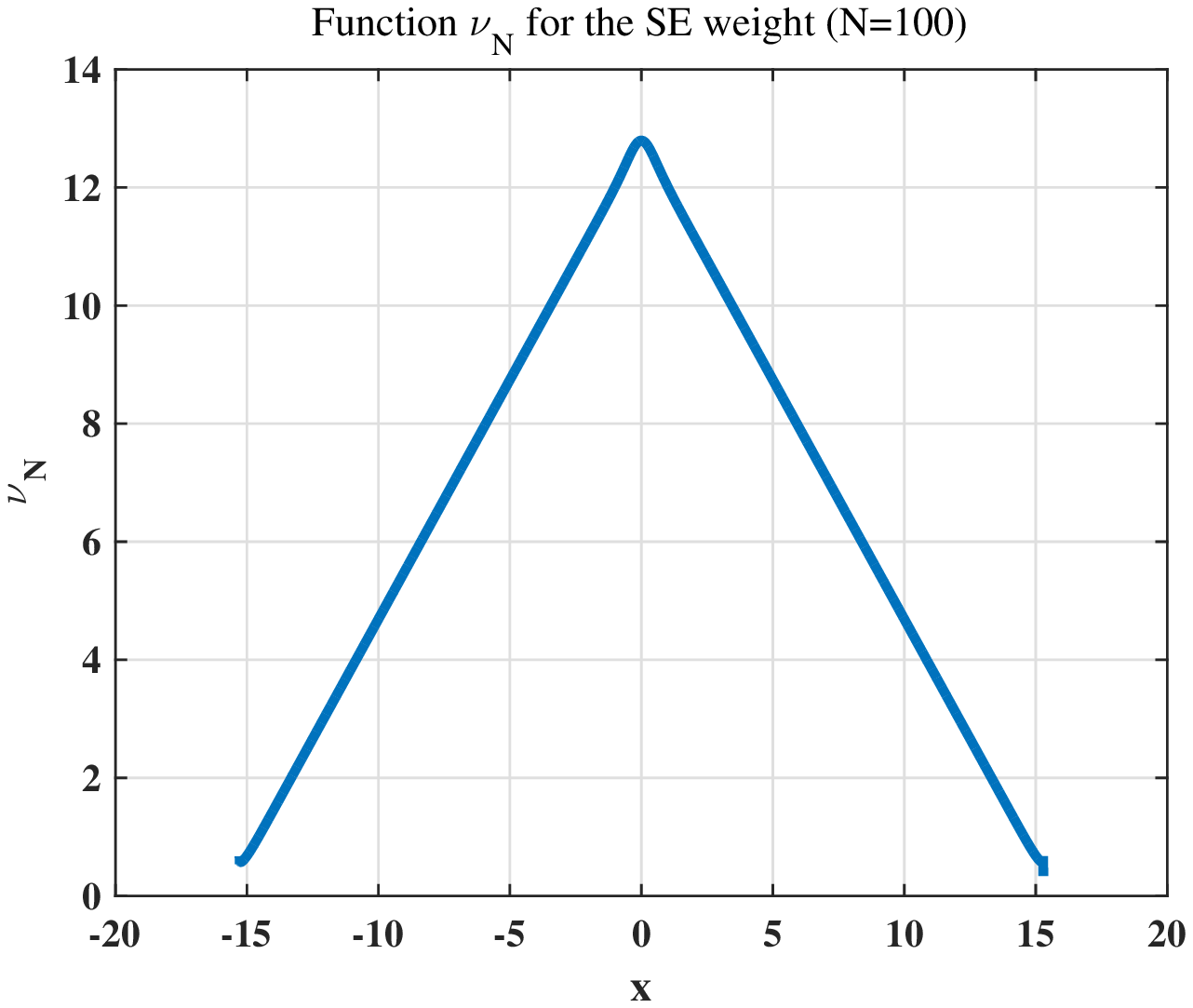}
(a) Function $\tilde{\nu}_{N}$ for $N=100$
\end{center}
\end{minipage}
\begin{minipage}[t]{0.49\linewidth}
\begin{center}
\includegraphics[width=\linewidth]{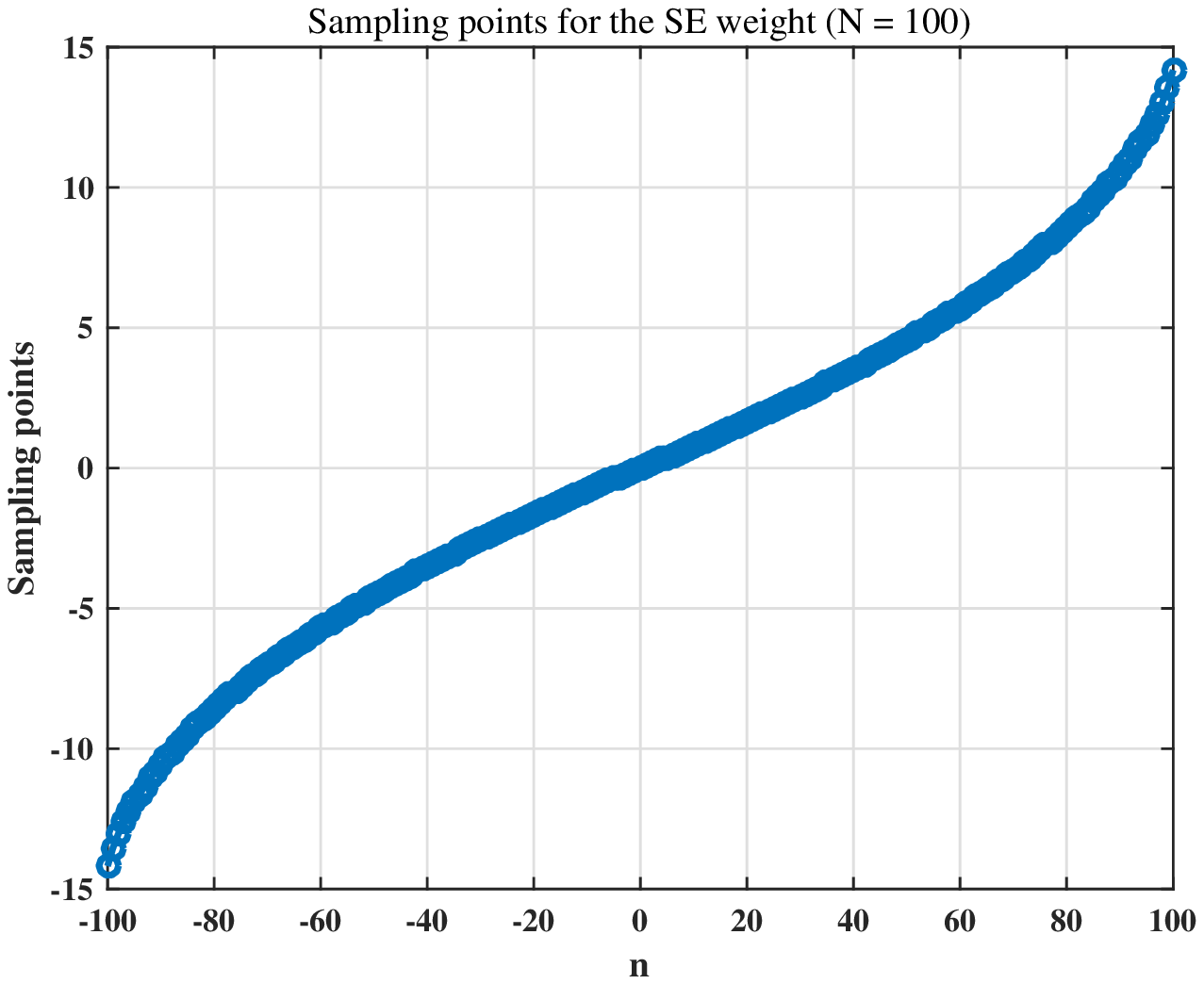}
(b) Sampling points for $N=100$
\end{center}
\end{minipage}

\hspace{5mm}

\noindent
\begin{minipage}[t]{0.49\linewidth}
\begin{center}
\includegraphics[width=\linewidth]{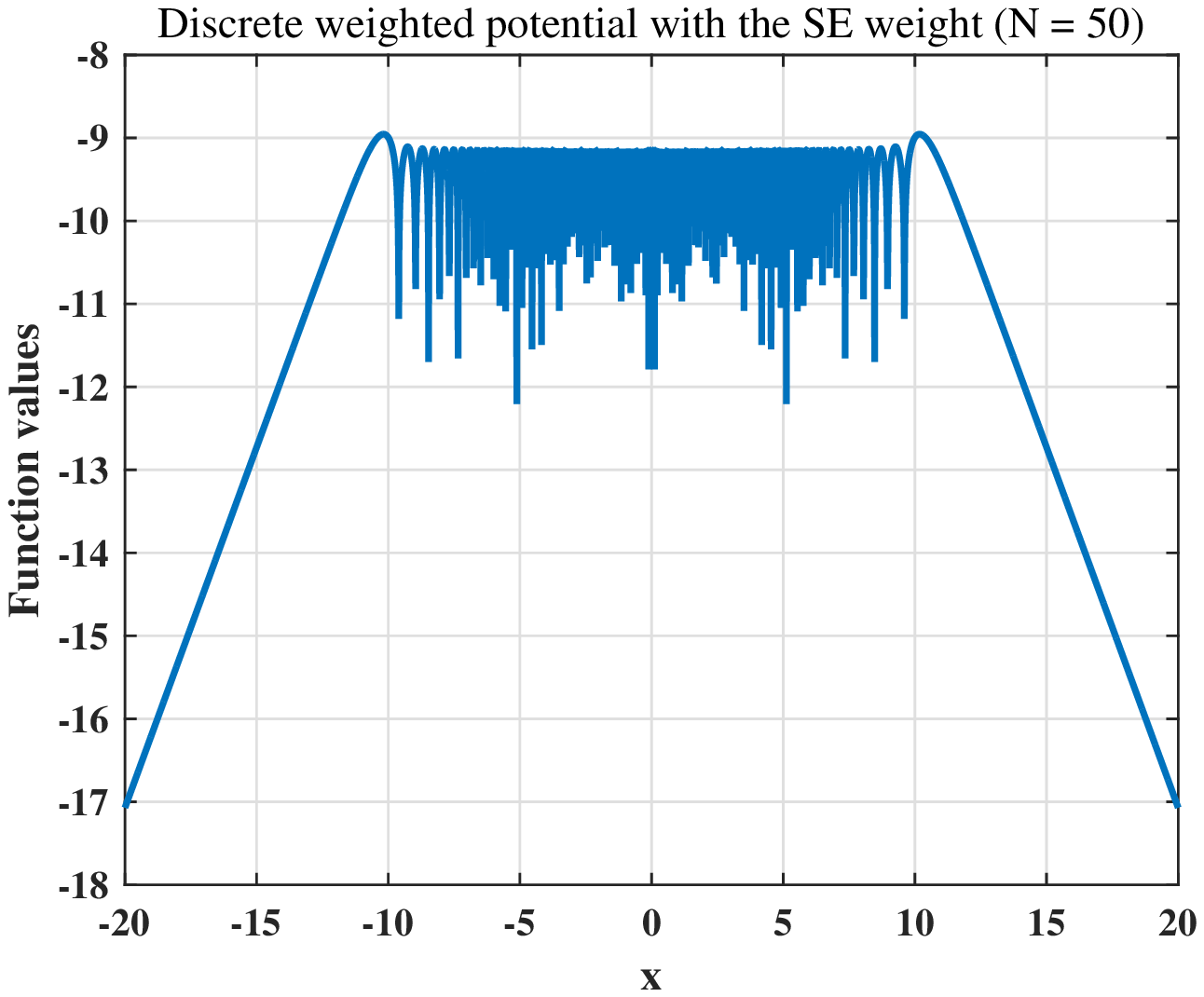}
(c) Discrete weighted potential \eqref{eq:DWP_revisited} for $N=50$
\end{center}
\end{minipage}
\begin{minipage}[t]{0.49\linewidth}
\begin{center}
\includegraphics[width=\linewidth]{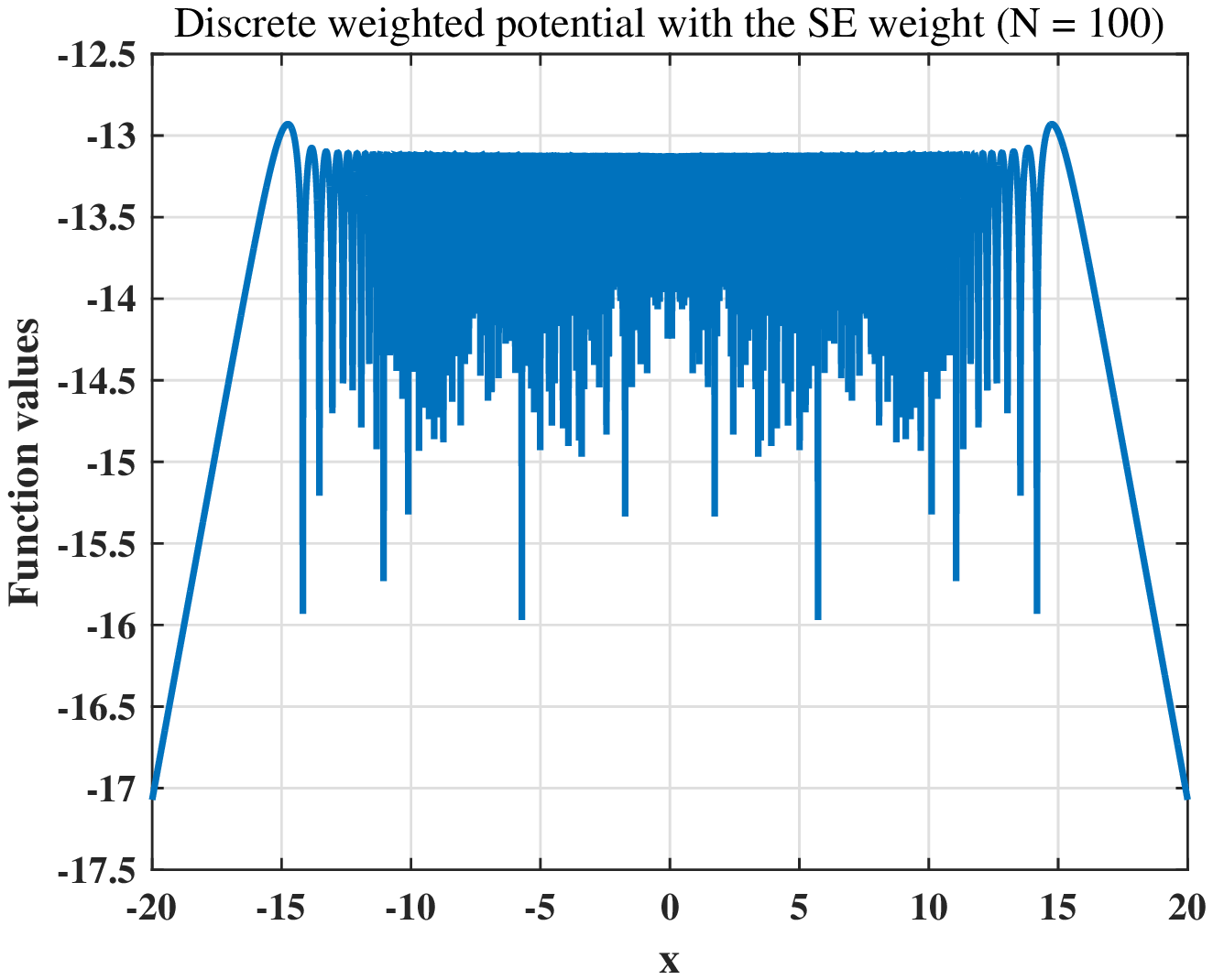}
(d) Discrete weighted potential \eqref{eq:DWP_revisited} for $N=100$
\end{center}
\end{minipage}
\bigskip
\caption{Results for the sampling points for weight (1) with the SE decay}
\label{fig:samp_SE}
\end{figure}

%-----
\begin{figure}[H]
\noindent
\begin{minipage}[t]{0.49\linewidth}
\begin{center}
\includegraphics[width=\linewidth]{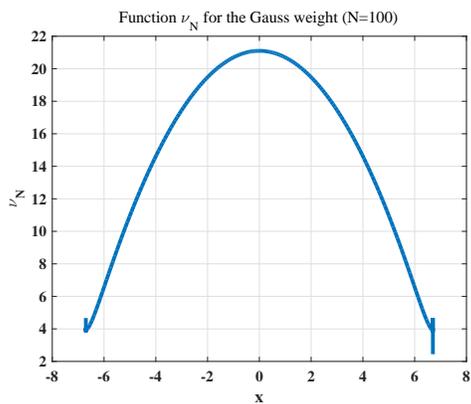}
(a) Function $\tilde{\nu}_{N}$ for $N=100$
\end{center}
\end{minipage}
\begin{minipage}[t]{0.49\linewidth}
\begin{center}
\includegraphics[width=\linewidth]{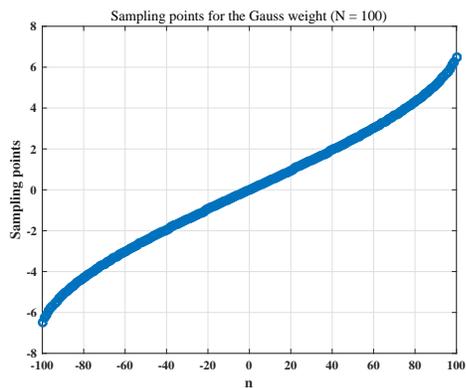}
(b) Sampling points for $N=100$
\end{center}
\end{minipage}

\hspace{5mm}

\noindent
\begin{minipage}[t]{0.49\linewidth}
\begin{center}
\includegraphics[width=\linewidth]{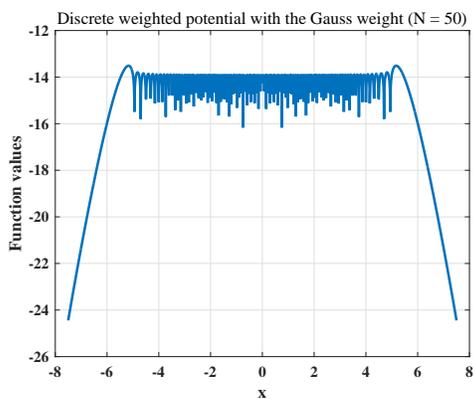}
(c) Discrete weighted potential \eqref{eq:DWP_revisited} for $N=50$
\end{center}
\end{minipage}
\begin{minipage}[t]{0.49\linewidth}
\begin{center}
\includegraphics[width=\linewidth]{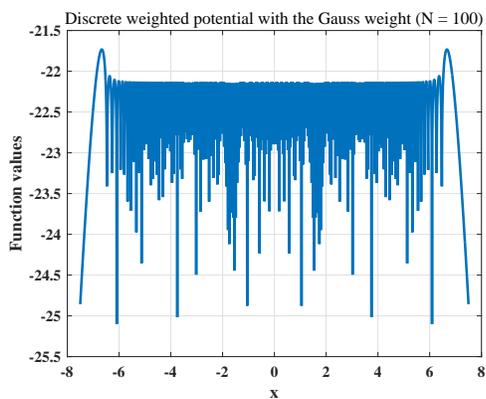}
(d) Discrete weighted potential \eqref{eq:DWP_revisited} for $N=100$
\end{center}
\end{minipage}
\bigskip
\caption{Results for the sampling points for weight (2) with the Gauss decay}
\label{fig:samp_Gauss}
\end{figure}

%-----
\begin{figure}[H]
\noindent
\begin{minipage}[t]{0.49\linewidth}
\begin{center}
\includegraphics[width=\linewidth]{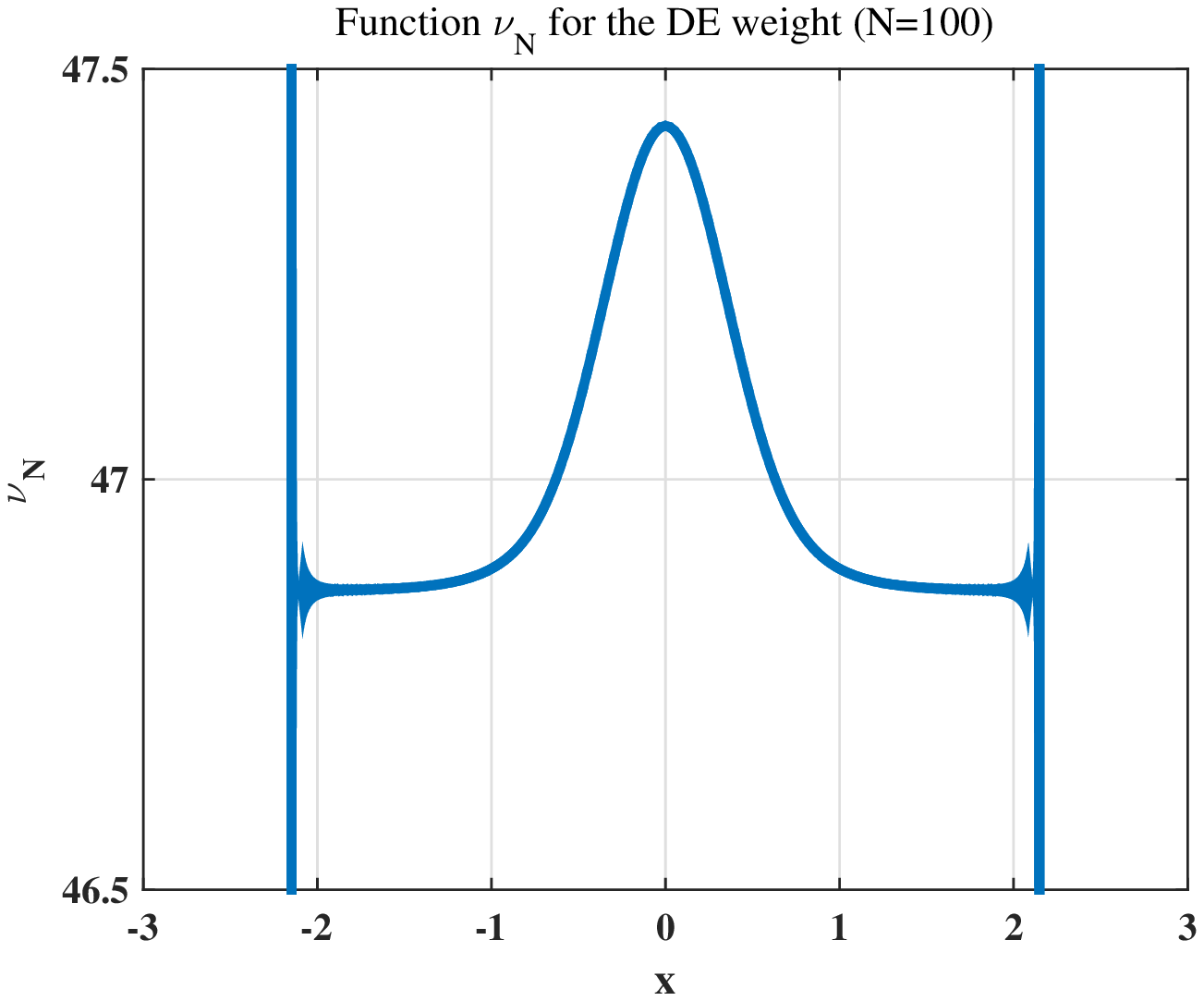}
(a) Function $\tilde{\nu}_{N}$ for $N=100$
\end{center}
\end{minipage}
\begin{minipage}[t]{0.49\linewidth}
\begin{center}
\includegraphics[width=\linewidth]{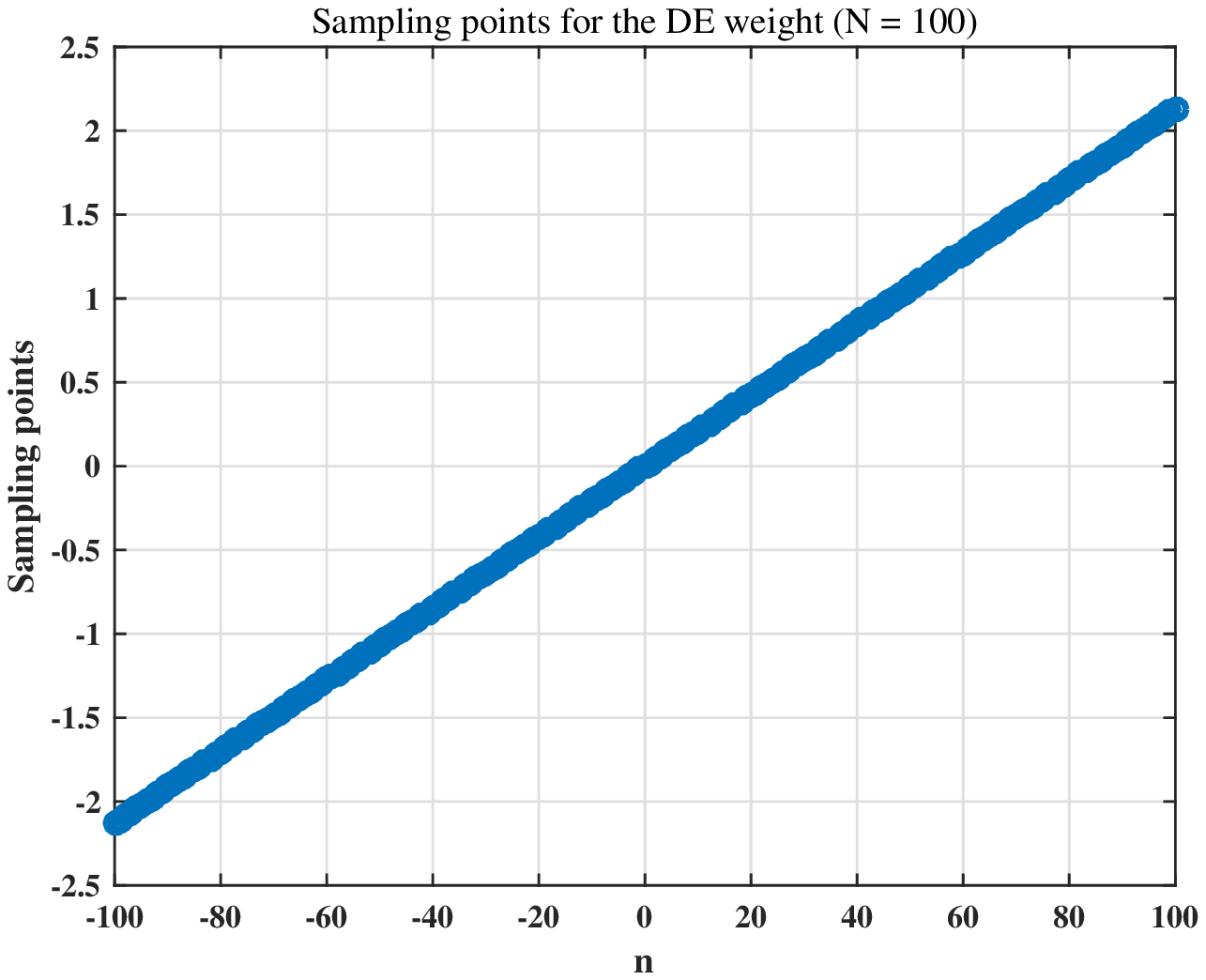}
(b) Sampling points for $N=100$
\end{center}
\end{minipage}

\hspace{5mm}

\noindent
\begin{minipage}[t]{0.49\linewidth}
\begin{center}
\includegraphics[width=\linewidth]{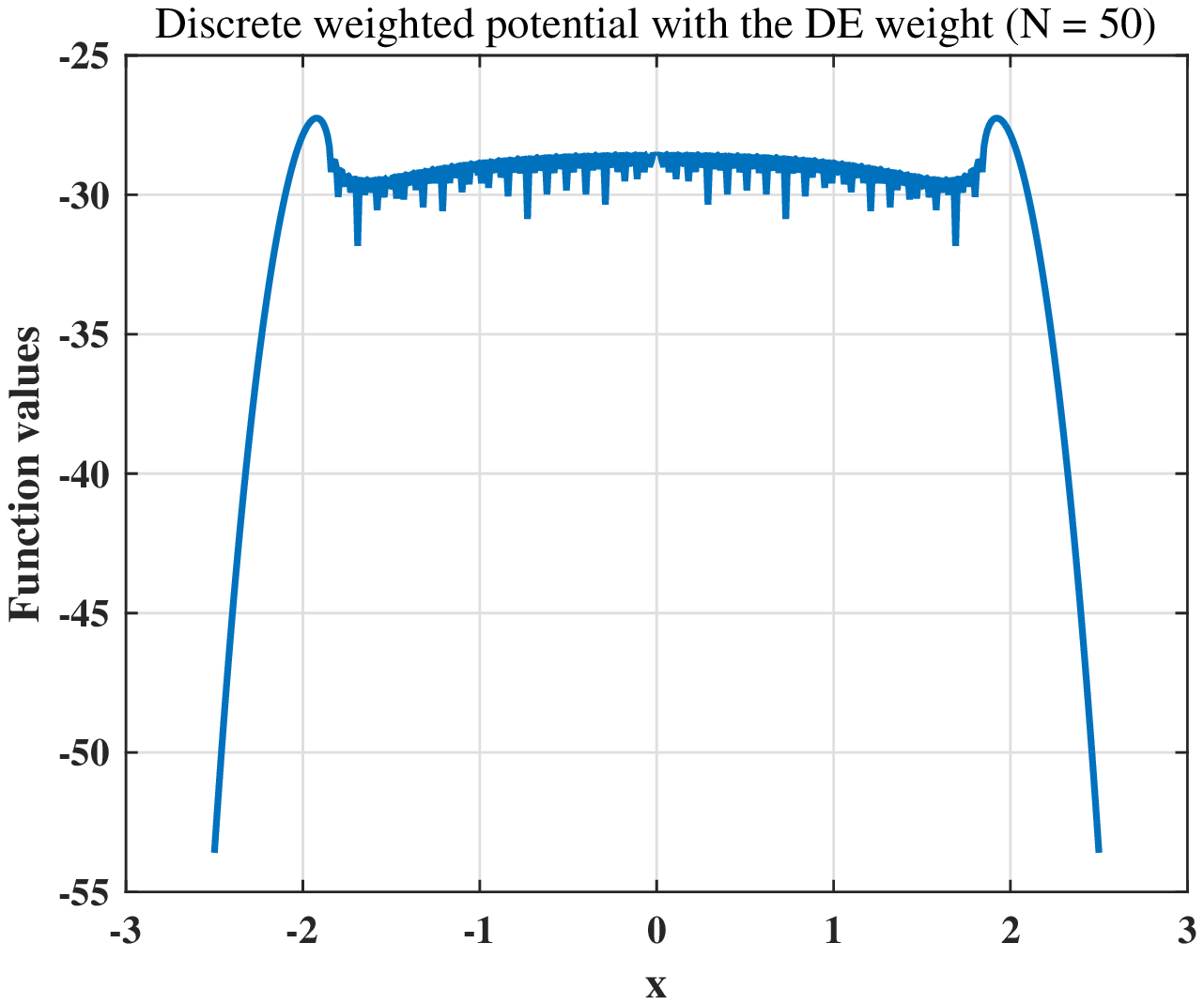}
(c) Discrete weighted potential \eqref{eq:DWP_revisited} for $N=50$
\end{center}
\end{minipage}
\begin{minipage}[t]{0.49\linewidth}
\begin{center}
\includegraphics[width=\linewidth]{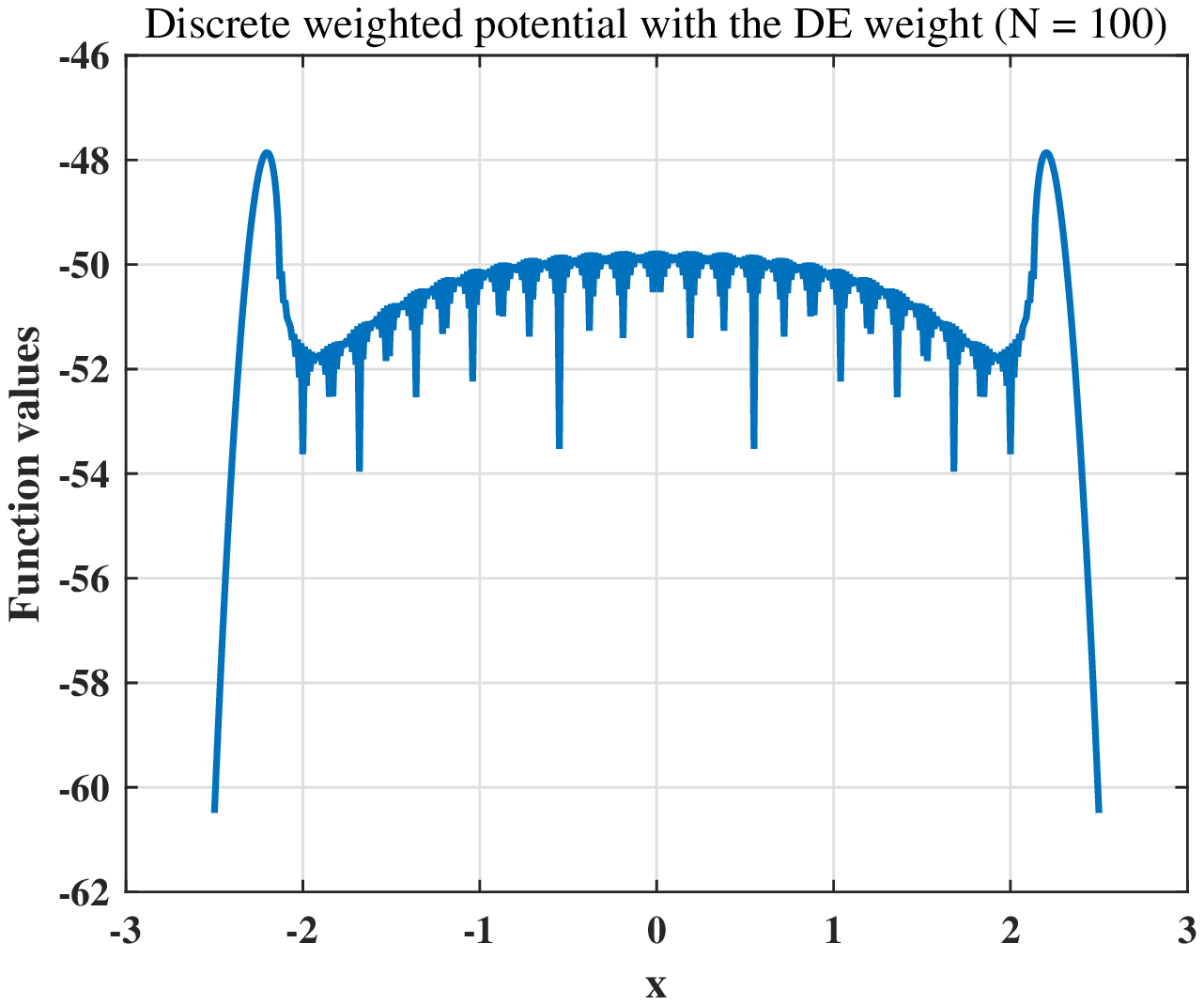}
(d) Discrete weighted potential \eqref{eq:DWP_revisited} for $N=100$
\end{center}
\end{minipage}
\bigskip
\caption{Results for the sampling points for weight (3) with the DE decay}
\label{fig:samp_DE}
\end{figure}

%--------------------------------------------------
\subsection{Results of function approximations}
\label{sec:approx_results}

%In order to compare with our formula, 
For comparison with our formula, 
we also computed the errors of the approximations 
using the SE-Sinc formulas and Ganelius's formula in the case of the (1) SE and (2) Gaussian functions, 
and those using the DE-Sinc formula in the case of the (3) DE function. 
For the sinc formulas, 
we follow the convention in \cite{bib:Sugihara_NearOpt_2003}, 
and use formula~\eqref{eq:ex_f_sinc} with 
\(
h = \pi/(2\sqrt{2N})
\),
\(
h = (\pi/(2N))^{2/3}
\), and
\(
h = \log(2 \pi N)/(2N)
\)
for (1) SE, (2) Gaussian, and (3) DE functions, respectively. 
Furthermore, as Ganelius's formula, we use formula~\eqref{eq:Ganelius_formula} with $\beta = 2$. 

We adopted ten different values for $N$ as $N = 10, 20, \ldots, 100$, 
and computed the approximations of the functions in Table~\ref{tab:func_weight} for $x = x_{\ell}$ given by
\begin{align}
x_{\ell} =
\begin{cases}
-20+0.04 \ell & ((1)\ \text{SE}), \\
-10+0.02 \ell & ((2)\ \text{Gaussian}), \\
-2.5+0.005 \ell & ((3)\ \text{DE})
\end{cases}
\end{align}
for $\ell = 0,1,\ldots, 1000$. 
Then, we computed the values $\max_{\ell} |f(x_{\ell}) - \tilde{f}_{N}(x_{\ell})|/\|f\|$,  
which are presented in Figures \ref{fig:func_SE}, \ref{fig:func_Ga}, and \ref{fig:func_DE} 
for the functions with SE, Gaussian, and DE decays, respectively. 
The computations of the approximations were performed 
using Matlab R2015a programs with multi-precision numbers, 
whose digits were 30, 50, and 90 for the functions with SE, Gaussian, and DE decays, respectively. 
For the multi-precision numbers, 
we used the Multiprecision Computing Toolbox for Matlab, produced by Advanpix (\url{http://www.advanpix.com}).

In the case of the SE weight, 
Ganelius's formula and our formula achieve almost the same accuracy, which surpass that of the SE-Sinc formula. 
The former result supports the observation in Example~\ref{ex:SE_error} that the error estimates of these formulas almost coincide. 
In the other cases, our formula outperforms the other formulas.

\begin{figure}[H]
\begin{center}
\includegraphics[width=0.6\linewidth]{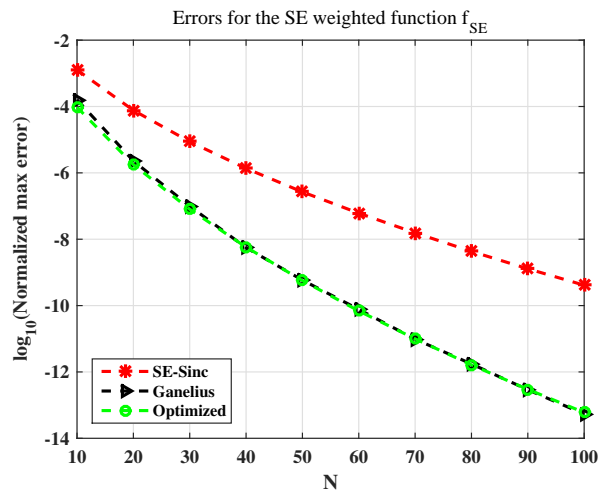}
\end{center}
\caption{Errors for the function with SE decay (1). }
\label{fig:func_SE}
\end{figure}

\begin{figure}[H]
\begin{center}
\includegraphics[width=0.6\linewidth]{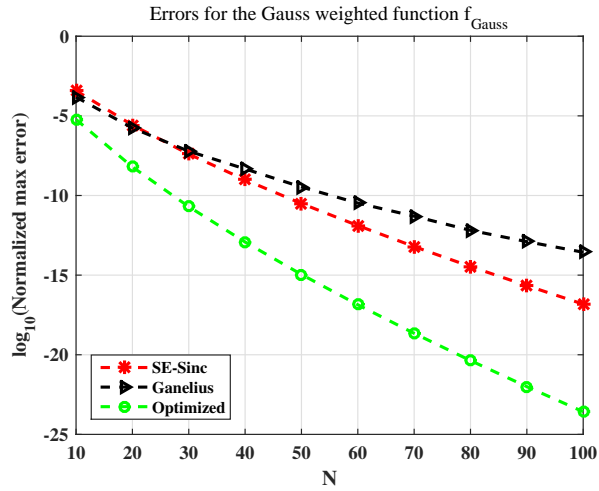}
\end{center}
\caption{Errors for the function with Gaussian decay (2). }
\label{fig:func_Ga}
\end{figure}

\begin{figure}[H]
\begin{center}
\includegraphics[width=0.6\linewidth]{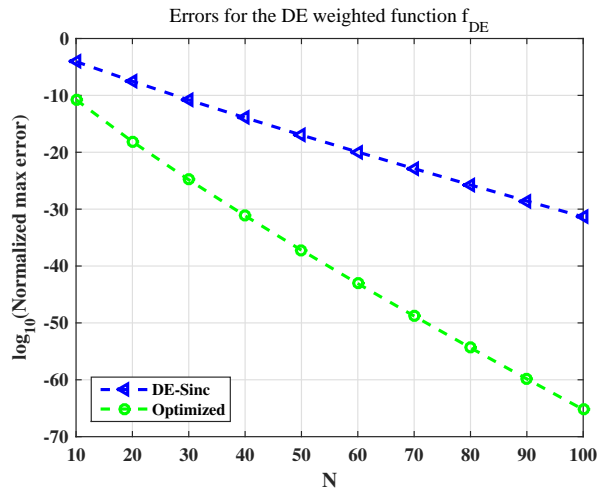}
\end{center}
\caption{Errors for the function with DE decay (3). }
\label{fig:func_DE}
\end{figure}

%------------------------------
\section{Concluding remarks}
\label{sec:Concl}

In this paper, we proposed a method for designing 
accurate function approximation formulas on weighted Hardy spaces $\boldsymbol{H}^{\infty}(\mathcal{D}_{d}, w)$
for weight functions fulfilling Assumptions \ref{assump:w}--\ref{assump:w_convex}.
We began with Problem~\ref{prob:Original}, which is 
the worst error minimization problem given by \eqref{eq:main_opt_equiv_log} 
to determine sampling points for the formulas. 
We approximately reduced this to 
Problem~\ref{prob:ContMeas} with a general measure $\mu_{N}$ for analytical tractability. 
According to potential theory,  
solutions of Problem~\ref{prob:ContMeas} are characterized by the system consisting of 
the integral equation \eqref{eq:char_opt_eq} and the integral inequality \eqref{eq:char_opt_ineq}. 
Next, we considered Problem~\ref{prob:IE_IINEQ} 
by introducing the measure $\mu[\nu_{N}]$ with the smooth density $\nu_{N}$
in place of the measure $\mu_{N}$ again for analytical tractability. 
Finally, using the harmonic property of the Green potential $U_{\mathcal{D}_{d}}^{\mu[\nu_{N}]}$, 
we considered Problem~\ref{prob:2Steps} as a reformulation of Problem~\ref{prob:IE_IINEQ} 
and obtained the Fourier transform \eqref{eq:FT_nu_N} of the approximate solution for the density $\nu_{N}$ of the measure $\mu[\nu_{N}]$. 
After determining the unknown parameters $\alpha_{N}$ and $K_{N}$ 
in the Fourier transform by \eqref{eq:det_alpha} and \eqref{eq:det_K}, 
we obtained an approximation of the density $\nu_{N}$. 
Then, using its discretization, we generated the sampling points and proposed the approximation formula~\eqref{eq:final_approx_formula}
for each space $\boldsymbol{H}^{\infty}(\mathcal{D}_{d}, w)$. 
Furthermore, 
we provided an error estimate for the proposed formulas in Theorem~\ref{thm:general_error_estimate} and 
observed that in numerical experiments our formulas outperformed the existing formulas. 

However, our procedure for generating the sampling points contains approximations 
in the reduction of Problem~\ref{prob:Original} to Problem~\ref{prob:ContMeas} and
in the approximate solution of the Dirichlet problem given by \eqref{eq:Laplace}--\eqref{eq:Laplace_outer_bnd}
for SP1 in Problem~\ref{prob:2Steps}. 
Therefore, we cannot guarantee that each of our formulas is precisely optimal 
in the corresponding space $\boldsymbol{H}^{\infty}(\mathcal{D}_{d}, w)$. 
Then, one possible direction for future work is the improvement of the procedure to obtain exactly optimal formulas.
Furthermore, another future direction may be generalization of the proposed method
such as a generalization of the domain $G = \mathcal{D}_{d}$ and the closed set $E = \mathbf{R}$
and/or 
a generalization of the weight function $w$. 
In particular, as the latter generalization, 
weight functions with complex singularities will be of our interest. 

Finally, we make two remarks about the computational aspects of the proposed formula in \eqref{eq:final_approx_formula}. 
First, 
the form of formula \eqref{eq:final_approx_formula} suggests
an $\mathrm{O}(N^{2})$ computation for evaluation at a fixed $x$, 
which is also the case of Lagrange interpolation of a polynomial. 
However, 
the complexity of Lagrange interpolation is reduced to $\mathrm{O}(N)$ by the barycentric formula 
\citep{
bib:barycent_BerrutTrefethen2004, 
bib:barycent_Higham2004}. 
Since formula \eqref{eq:final_approx_formula} has a similar form to a Lagrange interpolant, 
a certain analogue of the barycentric formula may reduce its complexity. 
Second, 
there remains the issue of the numerical stability of formula \eqref{eq:final_approx_formula}. 
The numerical stability of computing the product in $B_{N}(x; \{ a_{\ell} \} , \mathcal{D}_{d})$ 
is not clear because we used multi-precision arithmetic for the computations in Section \ref{sec:approx_results}.
We need to investigate the stability and to modify the proposed formula to stabilize it if necessary. 
A new barycentric formula mentioned above may be a possible candidate that gives such modification. 
We regard these computational issues as important topics of future work. 

%----------
\section*{Acknowledgements}

The authors would like to give thanks to the editors and anonymous referees 
for their valuable comments and suggestions.

%------------------------------
% Bibliography

%------------------------------
% Appendix
\appendix

\section{Proofs}
\label{sec:Proofs}

\subsection{Sketch of the proof of Proposition \ref{prop:Sugihara2003}}
\label{sec:proof_opt_err_norm}

First, we show the inequalities
\begin{align}
& E_{N}^{\mathrm{min}}(\boldsymbol{H}^{\infty}(\mathcal{D}_{d}, w)) \notag \\
& \leq
\inf_{\begin{subarray}{c} a_{\ell} \in \mathbf{R} \\ \text{distinct} \end{subarray}}
\left[
\sup_{\| f \| \leq 1}
\sup_{x \in \mathbf{R}} 
\left|
f(x) - \hspace{-1mm} \sum_{k = -N}^{N} \hspace{-1mm} f(a_{k}) 
\frac{ B_{N;k}(x; \{a_{\ell}\}, \mathcal{D}_{d}) w(x) }{B_{N;k}(a_{k}; \{a_{\ell}\}, \mathcal{D}_{d}) w(a_{k})} \frac{4d}{\pi} T_{d}'(a_{k} - x)
\right|
\right]
\notag \\
& \leq 
\inf_{a_{\ell} \in \mathbf{R}}
\left[
\sup_{x \in \mathbf{R}} 
\left|
B_{N}(x; \{a_{\ell}\} , \mathcal{D}_{d})\, w(x)
\right|
\right]. 
\label{eq:MinErrBlaschke_upper} 
\end{align}
The first inequality is trivial by the definition of $ E_{N}^{\mathrm{min}}(\boldsymbol{H}^{\infty}(\mathcal{D}_{d}, w))$
in \eqref{eq:def_E_min}. 
For the second inequality, 
we use residue analysis to obtain that 
\begin{align}
& 
\left|
f(x) - \hspace{-1mm} \sum_{k = -N}^{N} \hspace{-1mm} f(a_{k}) 
\frac{ B_{N;k}(x; \{a_{\ell}\}, \mathcal{D}_{d}) w(x) }{B_{N;k}(a_{k}; \{a_{\ell}\}, \mathcal{D}_{d}) w(a_{k})} \frac{4d}{\pi} T_{d}'(a_{k} - x) 
\right| \notag \\
& = 
\left|
\frac{1}{2\pi\, \i}
\int_{\partial \mathcal{D}_{d}} 
f(z)
\frac{B_{N}(x; \{a_{\ell}\}, \mathcal{D}_{d}) w(x)}{B_{N}(z; \{a_{\ell}\}, \mathcal{D}_{d}) w(z)} \, 
\frac{T_{d}'(z - x)}{T_{d}(z - x)} \, \d z 
\right| \notag \\
& \leq 
\left|
B_{N}(x; \{a_{\ell}\}, \mathcal{D}_{d}) w(x)
\right| \, 
\frac{1}{2\pi}
\int_{\partial \mathcal{D}_{d}} 
\left|
\frac{f(z)}{w(z)}
\right| \, 
\frac{1}{| B_{N}(z; \{a_{\ell}\}, \mathcal{D}_{d}) |}\, 
\left| 
\frac{T_{d}'(z - x)}{T_{d}(z - x)}  
\right|\, |\d z| \notag \\
& \leq 
\left|
B_{N}(x; \{a_{\ell}\}, \mathcal{D}_{d}) w(x)
\right| \, 
\frac{1}{2\pi}
\int_{\partial \mathcal{D}_{d}} 
\left| 
\frac{T_{d}'(z - x)}{T_{d}(z - x)}  
\right|\, |\d z| \notag \\
& = 
\left|
B_{N}(x; \{a_{\ell}\}, \mathcal{D}_{d}) w(x)
\right| 
\end{align}
for $f \in \boldsymbol{H}^{\infty}(\mathcal{D}_{d}, w)$ with $\| f \| \leq 1$ and $x \in \mathbf{R}$. 
Then, we have the second inequality. 

Next, we show the inequality
\begin{align}
E_{N}^{\mathrm{min}}(\boldsymbol{H}^{\infty}(\mathcal{D}_{d}, w))
\geq
\inf_{a_{\ell} \in \mathbf{R}}
\left[
\sup_{x \in \mathbf{R}} 
\left|
B_{N}(x; \{a_{\ell}\} , \mathcal{D}_{d})\, w(x)
\right|
\right]. 
\label{eq:MinErrBlaschke_lower} 
\end{align}
In order to show this inequality, 
we consider the subspace $\boldsymbol{F}_{0}(\{ a_{\ell} \}, \{ m_{\ell} \})$ defined by 
\begin{align}
& \boldsymbol{F}_{0}(\{ a_{\ell} \}, \{ m_{\ell} \}) 
=
\{ 
f \in \boldsymbol{H}^{\infty} (\mathcal{D}_{d}, w) 
\mid
\| f \| \leq 1, \ 
\forall \ell: \ f^{(k)}(a_{\ell}) = 0\ (k = 0,1,\ldots, m_{\ell} - 1) 
\}. 
\end{align}
By the definition of $\boldsymbol{F}_{0}(\{ a_{\ell} \}, \{ m_{\ell} \})$, 
any interpolant of $f$ at $\{ a_{\ell} \}$ vanishes if $f \in \boldsymbol{F}_{0}(\{ a_{\ell} \}, \{ m_{\ell} \})$. 
Therefore, we have that 
\begin{align}
E_{N}^{\mathrm{min}}(\boldsymbol{H}^{\infty}(\mathcal{D}_{d}, w))
& \geq 
\inf_{1 \leq l \leq N} 
\inf_{\begin{subarray}{c} m_{-l}, \ldots , m_{l} \\ m_{-l}+\cdots+m_{l} = 2N+1 \end{subarray}}
\inf_{\begin{subarray}{c} a_{\ell} \in \mathcal{D}_{d} \\ \text{distinct} \end{subarray}}
\left[ 
\sup_{f \in \boldsymbol{F}_{0}(\{ a_{\ell} \}, \{ m_{\ell} \})} 
\sup_{x \in \mathbf{R}}|f(x)| 
\right] \notag \\
& \geq 
\inf_{a_{\ell} \in \mathcal{D}_{d}}
\left[
\sup_{x \in \mathbf{R}} 
\left|
B_{N}(x; \{a_{\ell}\} , \mathcal{D}_{d})\, w(x)
\right|
\right] \notag \\
& \geq 
\inf_{a_{\ell} \in \mathbf{R}}
\left[
\sup_{x \in \mathbf{R}} 
\left|
B_{N}(x; \{a_{\ell}\} , \mathcal{D}_{d})\, w(x)
\right|
\right]
\end{align}
because 
$B_{N}(\, \cdot\, ; \{a_{\ell}\} , \mathcal{D}_{d})\, w(\, \cdot\, ) \in \boldsymbol{F}_{0}(\{ a_{\ell} \}, \{ m_{\ell} \})$
and 
\begin{align}
| \tanh[C_{d} (x - a)] | \geq | \tanh[C_{d} (x - (\mathop{\mathrm{Re}} a))] |
\end{align}
for any $a \in \mathcal{D}_{d}$, where $C_{d} = \pi/(4d)$. 
Hence we have \eqref{eq:MinErrBlaschke_lower}. 

%-----
\subsection{Proof of Proposition \ref{prop:smooth_V}}
\label{sec:proof_smooth_V}

For simplicity, 
we set $C_{d} = \pi/(4d)$, and 
we use $\alpha$, $\nu$, and $V$ in place of $\alpha_{N}$, $\nu_{N}$, and $V_{\mathcal{D}_{d}}^{\mu[\nu_{N}]}$, respectively. 
First, we investigate the function $V$ given by
\begin{align}
V(x) = \int_{-\alpha}^{\alpha} \log |\tanh(C_{d}(x-z))|\, \nu(z)\, \d z, 
\end{align}
where $\nu \in C^{1}(-\alpha, \alpha)$ and $\nu(\pm \alpha) = 0$. 
In the following, we will prove that
\begin{align}
V'(x) 
=
\int_{-\alpha}^{\alpha} \log |\tanh(C_{d}(x-z))|\, \nu'(z)\, \d z
\label{eq:V_prime}
\end{align}
for $x$ with $|x| \neq \alpha$. 
It suffices to show that formula \eqref{eq:V_prime} holds,
because this also guarantees the relation $V'(\pm \alpha - 0) = V'(\pm \alpha + 0)$. 

In the case $|x| > \alpha$, 
we can derive formula~\eqref{eq:V_prime} using a standard argument from calculus and integration by parts. 
In the case $|x| < \alpha$, 
we consider the interval $I_{\delta} = [-\alpha + \delta, \alpha - \delta]$ for $\delta$ with $0 < \delta < \alpha$, 
and define $V_{\varepsilon}(x)$ for $x \in I_{\delta}$ by
\begin{align}
V_{\varepsilon}(x)
= 
\left(\int_{-\alpha}^{x - \varepsilon} + \int_{x + \varepsilon}^{\alpha} \right) 
\log |\tanh(C_{d}(x-z))|\, \nu(z)\, \d z, 
\end{align}
where $0 < \varepsilon < \delta$. 
Clearly, $\lim_{\varepsilon \to 0} V_{\varepsilon}(x) = V(x)$ holds for any $x \in I_{\delta}$. 
Then, if we can show that $V_{\varepsilon}'$ is continuous on $I_{\delta}$ and 
\begin{align}
\lim_{\varepsilon \to 0} V_{\varepsilon}'(x) = \int_{-\alpha}^{\alpha} \log |\tanh(C_{d}(x-z))|\, \nu'(z)\, \d z
\label{eq:V_eps_prime_limit}
\end{align}
uniformly with respect to $x \in I_{\delta}$, 
we have that \eqref{eq:V_prime} holds for  $x \in I_{\delta}$. 
The derivative of $V_{\varepsilon}$ is given by
\begin{align}
V_{\varepsilon}'(x) 
= &
\left(\int_{-\alpha}^{x - \varepsilon} + \int_{x + \varepsilon}^{\alpha} \right) 
\frac{2C_{d}}{\sinh(2C_{d}(x-z))}\, \nu(z)\, \d z \notag \\
& + 
\log |\tanh(C_{d} \varepsilon )|\, ( \nu(x - \varepsilon) - \nu(x + \varepsilon) ) \notag \\
= &
\log \left| \tanh(C_{d}(x+\alpha)) \right| \nu(-\alpha) 
-\log \left| \tanh(C_{d}(x-\alpha)) \right| \nu(\alpha) \notag \\
& + 
\left(\int_{-\alpha}^{x - \varepsilon} + \int_{x + \varepsilon}^{\alpha} \right) 
\log \left| \tanh(C_{d}(x-z)) \right| \nu'(z)\, \d z \notag \\
= &
\left(\int_{-\alpha}^{\alpha} - \int_{x - \varepsilon}^{x + \varepsilon} \right) 
\log \left| \tanh(C_{d}(x-z)) \right| \nu'(z)\, \d z, 
\label{eq:V_eps_prime}
\end{align}
which is continuous on $I_{\delta}$. 
We have used integration by parts for the second equality in \eqref{eq:V_eps_prime}. 
Because $\nu'$ is continuous on $I_{\delta}$, 
it has a minimum value $\Phi_{\mathrm{min}}$ and the maximum value $\Phi_{\mathrm{max}}$ on $I_{\delta}$. 
Using these, we have
\begin{align}
& - \Phi_{\mathrm{min}} 
\int_{x - \varepsilon}^{x + \varepsilon} \log \left| \tanh(C_{d}(x-z)) \right| \, \d z \notag \\
& \leq
- \int_{x - \varepsilon}^{x + \varepsilon} 
\log \left| \tanh(C_{d}(x-z)) \right| \nu'(z)\, \d z \notag \\
& \leq
- \Phi_{\mathrm{max}} 
\int_{x - \varepsilon}^{x + \varepsilon} \log \left| \tanh(C_{d}(x-z)) \right| \, \d z. 
\label{eq:log_tanh_ineq}
\end{align}
Furthermore, we can show that 
\begin{align}
\int_{x - \varepsilon}^{x + \varepsilon} \log \left| \tanh(C_{d}(x-z)) \right| \, \d z
= 
\int_{- \varepsilon}^{\varepsilon} \log \left| \tanh(C_{d} t) \right| \, \d t
\end{align}
is independent of $x$ and tends to zero as $\varepsilon \to 0$. 
Then, it follows from \eqref{eq:V_eps_prime} and \eqref{eq:log_tanh_ineq} 
that the uniform convergence \eqref{eq:V_eps_prime_limit} on $I_{\delta}$ holds. 
Because $\delta$ is arbitrary in $(0, \alpha)$, we have that \eqref{eq:V_prime} holds for $x$ with $|x| < \alpha$. 

%-----
\subsection{Proof of Lemma \ref{lem:GrPotDiscError_mod}}
\label{sec:proof_GrPotDiscError_mod}

In this proof as well, we will use $C_{d} = \pi/(4d)$ for simplicity.

\bigskip

\noindent
\renewcommand{\proof}{{\it Proof in the case $x \in [a(-N), a(N)]$.\ }}
\begin{proof}%[\text{\it Proof in the case $x \in [a(-N), a(N)]$}]
Let $m$ be the integer with $x \in [a(m), a(m+1))$ and 
let $n$ be an integer with $n \leq m$.
Noting that $\log | \tanh(C_{d}(x - z)) |$ is a monotone decreasing function of $z$ with $z < x$, 
for $z \in [a(n-1), a(n))$, we have that
\begin{align}
\log | \tanh(C_{d}(x - a(n))) | \leq \log | \tanh(C_{d}(x - z)) |. \label{eq:LTz_L}
\end{align}
Because we have that 
\begin{align}
\int_{a(l)}^{a(l+1)} b'(z)\, \d z = b(a(l+1)) - b(a(l)) = (l+1) - l = 1,
\label{eq:int_b_1}
\end{align}
we can multiply both sides of \eqref{eq:LTz_L} by $b'(z)$ and 
integrate them with respect to $z$, to obtain
\begin{align}
\log | \tanh(C_{d}(x - a(n))) | 
\leq 
\int_{a(n-1)}^{a(n)} \log | \tanh(C_{d}(x - z)) | \, b'(z)\, \d z.
\end{align}
Summing these terms for $n = -N, \ldots, m$, we have that
\begin{align}
\sum_{n=-N}^{m} \log | \tanh(C_{d}(x - a(n))) | 
\leq 
\int_{a(-N-1)}^{a(m)} \log | \tanh(C_{d}(x - z)) | \, b'(z)\, \d z.
\label{eq:sum_Left}
\end{align}
By noting that $\log | \tanh(C_{d}(x - z)) |$ is a monotone increasing function of $z$ with $x < z$, 
and applying similar arguments to those used above, we also have that
\begin{align}
\sum_{n=m+1}^{N} \log | \tanh(C_{d}(x - a(n))) | 
\leq 
\int_{a(m+1)}^{a(N+1)} \log | \tanh(C_{d}(x - z)) | \, b'(z)\, \d z. 
\label{eq:sum_Right}
\end{align}
Therefore, by combining \eqref{eq:sum_Left} and \eqref{eq:sum_Right}, we obtain
\begin{align}
& \sum_{n=-N}^{N} \log | \tanh(C_{d}(x - a(n))) | \notag \\
& \leq 
\int_{a(-N-1)}^{a(N+1)} \log | \tanh(C_{d}(x - z)) | \, b'(z)\, \d z 
- \left( \int_{a(m)}^{a(m+1)} \right). 
\label{eq:SumLT_Upper_2}
\end{align}
Here, we partition the second term of the RHS in \eqref{eq:SumLT_Upper_2} as
\begin{align}
& \int_{a(m)}^{a(m+1)} \log | \tanh(C_{d}(x - z)) | \, b'(z)\, \d z \notag \\
& = 
\left(
\int_{
\begin{subarray}{c}
z: |x-z|<N^{-\lambda} \\
z \in [a(m), a(m+1))
\end{subarray}
} 
+
\int_{
\begin{subarray}{c}
z: |x-z| \geq N^{-\lambda} \\
z \in [a(m), a(m+1))
\end{subarray}
} \right) \log | \tanh(C_{d}(x - z)) | \, b'(z)\, \d z.
\label{eq:SumLT_Upper_2_sep}
\end{align}
To estimate the first term in \eqref{eq:SumLT_Upper_2_sep}, 
we let $t_{d}$ denote the positive solution of $(C_{d}/2) t = \tanh (C_{d} t)$
and set $N_{d} = \lceil (1/t_{d})^{1/\lambda} \rceil$. 
Then, $0 \leq (C_{d}/2) t \leq \tanh (C_{d} t)$ holds for $t$ with $0 \leq t \leq t_{d}$. 
Therefore, for $N$ with $N \geq N_{d}$, we have 
\begin{align}
& \int_{
\begin{subarray}{c}
z: |x-z|<N^{-\lambda} \\
z \in [a(m), a(m+1)]
\end{subarray}
} 
\log | \tanh(C_{d}(x - z)) | \, b'(z)\, \d z \notag \\
& \geq 
\int_{z: |x-z|<N^{-\lambda}} 
\log | (C_{d}/2) (x - z) | \, b'(z)\, \d z.  
\label{eq:ineq_tanht_t}
\end{align}
Then, from the inequality \eqref{eq:ineq_tanht_t} and the assumption \eqref{eq:b_prime_max}, 
we have that
\begin{align}
& \int_{
\begin{subarray}{c}
z: |x-z|<N^{-\lambda} \\
z \in [a(m), a(m+1)]
\end{subarray}
} 
\log | \tanh(C_{d}(x - z)) | \, b'(z)\, \d z \notag \\
& \geq 
c N^{\lambda} 
\int_{z: |x-z|<N^{-\lambda}} 
\log | (C_{d}/2) (x - z) | \, \d z \notag \\
& =
c N^{\lambda} 
\left(
\int_{z: |x-z|<N^{-\lambda}} 
\log | x - z | \, \d z 
+ \frac{2 \log (C_{d}/2)}{N^{\lambda}} 
\right) \notag \\
& = 
c N^{\lambda} 
\left(
2 \int_{0}^{N^{-\lambda}} 
\log z' \, \d z' 
+ \frac{2 \log (C_{d}/2)}{N^{\lambda}} 
\right) \notag \\
& =
c N^{\lambda} 
\left(
\frac{2 \log N^{-\lambda}}{N^{\lambda}}  - \frac{2}{N^{\lambda}}
+ \frac{2 \log (C_{d}/2)}{N^{\lambda}} 
\right) \notag \\
& = 
- c\, ( 2 \lambda \log N + 2 - 2 \log (C_{d}/2)).  
\label{eq:SumLT_Upper_2_sep_first}
\end{align}
Furthermore, 
for $N$ with $N \geq N_{d}$, 
the second term in \eqref{eq:SumLT_Upper_2_sep} is estimated as
\begin{align}
& \int_{
\begin{subarray}{c}
z: |x-z| \geq N^{-\lambda} \\
z \in [a(m), a(m+1)]
\end{subarray}
} \log | \tanh(C_{d}(x - z)) | \, b'(z)\, \d z \notag \\
& \geq
\log | \tanh(C_{d} N^{-\lambda}) |
\int_{
\begin{subarray}{c}
z: |x-z| \geq N^{-\lambda} \\
z \in [a(m), a(m+1)]
\end{subarray}
} b'(z)\, \d z \notag \\
& \geq
\log ((C_{d}/2) N^{-\lambda})
\int_{
\begin{subarray}{c}
z: |x-z| \geq N^{-\lambda} \\
z \in [a(m), a(m+1)]
\end{subarray}
} b'(z)\, \d z \notag \\
& \geq 
\log ((C_{d}/2) N^{-\lambda})
\int_{a(m)}^{a(m+1)} b'(z)\, \d z 
= - (\lambda \log N - \log (C_{d}/2)). 
\label{eq:SumLT_Upper_2_sep_second}
\end{align}
From
\eqref{eq:SumLT_Upper_2},
\eqref{eq:SumLT_Upper_2_sep},
\eqref{eq:SumLT_Upper_2_sep_first}, and 
\eqref{eq:SumLT_Upper_2_sep_second}, 
we have that
\begin{align}
\sum_{n=-N}^{N} \log | \tanh(C_{d}(x - a(n))) | 
\leq 
& \int_{a(-N-1)}^{a(N+1)} \log | \tanh(C_{d}(x - z)) | \, b'(z)\, \d z \notag \\
& + (2c+1) (\lambda \log N - \log (C_{d}/2)) + 2c, 
\label{eq:SumLT_Upper_final}
\end{align}
which implies \eqref{eq:DiscPotUpper}. 
\end{proof}

\bigskip

\noindent
\renewcommand{\proof}{{\it Proof in the case $x \not \in [a(-N), a(N)]$.\ }}
\begin{proof}%[\text{\it Proof in the case $x \not \in [a(-N), a(N)]$}]
We consider the case that $x > a(N)$ and omit the case that $x < a(-N)$
because the latter can be proved similarly. 
First, we deduce that
\begin{align}
\sum_{n=-N}^{N} \log | \tanh(C_{d}(x - a(n))) | 
\leq 
\int_{a(-N-1)}^{a(N)} \log | \tanh(C_{d}(x - z)) | \, b'(z)\, \d z 
\label{eq:SumLT_Upper_xlarge}
\end{align}
in a similar manner to \eqref{eq:sum_Left}.
Then, in the case that $x \in [a(N), a(N+1)]$, 
we deduce \eqref{eq:SumLT_Upper_final} from \eqref{eq:SumLT_Upper_xlarge}
in a similar manner to 
\eqref{eq:SumLT_Upper_2_sep},
\eqref{eq:SumLT_Upper_2_sep_first}, and
\eqref{eq:SumLT_Upper_2_sep_second}. 
Furthermore, in the case that $x > a(N+1)$, 
we can use the inequality
\begin{align}
& - \int_{a(N)}^{a(N+1)} \log | \tanh(C_{d}(x - z)) | \, b'(z)\, \d z \notag \\
& \leq 
- \int_{a(N)}^{a(N+1)} \log | \tanh(C_{d}(a(N+1) - z)) | \, b'(z)\, \d z,
\end{align}
to reduce this case to the case that $x \in [a(N), a(N+1)]$.
Then, we can deduce \eqref{eq:SumLT_Upper_final} from \eqref{eq:SumLT_Upper_xlarge}. 
Thus, we have shown that \eqref{eq:DiscPotUpper} holds. 
\end{proof}
\renewcommand{\proof}{{\it Proof.\ }}

%-----
\subsection{Proof of Lemma \ref{lem:mu_prime_estim}}
\label{sec:proof_mu_prime_estim}

In this proof, we use $\alpha$ in place of $\tilde{\alpha}_{N}^{\ast}$ for simplicity. 
According to Assumption \ref{assump:nu_max_x_0}, 
it suffices to estimate $|\tilde{\nu}_{N}(0)|$. 
Let $p_{w}$ and $q_{w}$ be defined by
\begin{align}
p_{w}(\omega) & = \frac{1}{\mathrm{i}\, \pi} \int_{-\alpha}^{\alpha} v(x)\, \mathrm{e}^{-\mathrm{i}\, \omega\, x}\, \mathrm{d}x, \\
q_{w}(\omega) & = -\frac{4d\, v(\alpha)}{\pi} \frac{\pi \sin (\alpha \omega) + 2d \omega \cos(\alpha \omega)}{(\pi^{2} + 4d^{2} \omega^{2})}. 
\end{align}
Then, it follows from \eqref{eq:FT_nu_N} that 
\begin{align}
2\pi \, |\tilde{\nu}_{N}(0)|
& = 
\left|
\int_{-\infty}^{\infty} \frac{p_{w}(\omega) + q_{w}(\omega)}{\tanh(d \omega)}\, \mathrm{d}\omega
\right| \notag \\
& \leq
\left|
\int_{-\infty}^{\infty} \frac{p_{w}(\omega)}{\tanh(d \omega)}\, \mathrm{d}\omega
\right|
+
\left|
\int_{-\infty}^{\infty} \frac{q_{w}(\omega)}{\tanh(d \omega)}\, \mathrm{d}\omega
\right|. 
\label{eq:bp_zero_triangle}
\end{align}
Because $v(x)$ is odd, $p_{w}(\omega)$ must be even. 
Therefore, for the first term in \eqref{eq:bp_zero_triangle}, we have 
\begin{align}
\left|
\int_{-\infty}^{\infty} \frac{p_{w}(\omega)}{\tanh(d\, \omega)}\, \mathrm{d}\omega
\right|
& \leq 
2 \left|
\int_{0}^{\infty} \frac{p_{w}(\omega)}{\tanh(d\, \omega)}\, \mathrm{d}\omega
\right|
\leq 
2 (B_{p, 1} + B_{p, 2} ), 
\label{eq:first}
\end{align}
where 
\begin{align}
B_{p, 1}
& = 
\int_{0}^{\pi/(2\alpha)} 
\left|
\frac{p_{w}(\omega)}{\tanh(d\, \omega)}
\right|
\, \mathrm{d}\omega, \label{eq:B_p_1} \\
B_{p, 2}
& = 
\left|
\int_{\pi/(2\alpha)}^{\infty} \frac{p_{w}(\omega)}{\tanh(d\, \omega)}\, \mathrm{d}\omega
\right|. \label{eq:B_p_2}
\end{align}
For the second term in \eqref{eq:bp_zero_triangle}, we have
\begin{align}
& \left|
\int_{-\infty}^{\infty} \frac{q_{w}(\omega)}{\tanh(d\, \omega)}\, \mathrm{d}\omega
\right| 
\leq
\left|
\frac{4d\, v(\alpha)}{\pi} 
\right| ( B_{q,1} + B_{q,2} )
\label{eq:second}
\end{align}
where 
\begin{align}
B_{q,1} & =  
\int_{-\infty}^{\infty}
\frac{\pi}{\pi^{2} + 4d^{2} \omega^{2}}
\left|
\frac{\sin (\alpha \omega) }{\tanh(d\, \omega)}
\right|
\, \mathrm{d}\omega, \label{eq:B_q_1} \\
B_{q,2} & =  
\left|
\int_{-\infty}^{\infty}
\frac{2d}{\pi^{2} + 4d^{2} \omega^{2}}
\frac{\omega \cos(\alpha \omega)}{\tanh(d\, \omega)}
\, \mathrm{d}\omega
\right|. \label{eq:B_q_2}
\end{align}
In the following, we provide estimates of 
$B_{p,1}$, 
$B_{p,2}$, 
$B_{q,1}$, and
$B_{q,2}$. 

\bigskip
\noindent
{\it Estimate of $B_{p,1}$ in \eqref{eq:B_p_1}.}
Because $v$ is odd, we have that
\begin{align}
p_{w}(\omega)
=
- \frac{1}{\pi} \int_{-\alpha}^{\alpha} v(x) \sin(\omega\, x)\, \mathrm{d}x. 
\end{align}
Furthermore, 
from Assumptions~\ref{assump:w_even} and~\ref{assump:w_convex},  
we have $v(x) \leq 0$ for $x \geq 0$, and $v(x) \geq 0$ for $x \leq 0$. 
Then, we have that
\begin{align}
p_{w}'(\omega)
& =
- \frac{1}{\pi} \int_{-\alpha}^{\alpha} x\, v(x) \cos(\omega\, x)\, \mathrm{d}x
\geq 0 \quad (0 \leq \omega \leq \pi/(2\alpha)),
\notag \\
p_{w}''(\omega)
& =
+ \frac{1}{\pi} \int_{-\alpha}^{\alpha} x^{2} v(x) \sin(\omega\, x)\, \mathrm{d}x
\leq 0 \quad (0 \leq \omega \leq \pi/\alpha).
\notag 
\end{align}
From these, we have that
$0 \leq p_{w}(\omega) \leq p_{w}'(0)\, \omega$ for $0 \leq \omega \leq \pi/(2\alpha)$. 
Therefore, we have that
\begin{align}
0 \leq \frac{p_{w}(\omega)}{\tanh(d\, \omega)} \leq \frac{p_{w}'(0)\, \omega}{\tanh(d\, \omega)}
\label{eq:ineq_linear}
\end{align}
for $0 < \omega \leq \pi/(2\alpha)$. 
Because it holds that
\begin{align}
\frac{\mathrm{d}}{\mathrm{d} \omega} \left( \frac{\omega}{\tanh(d\, \omega)} \right)
=
\frac{\sinh(2d\, \omega) - 2 d\, \omega}{2 \sinh^{2}(d\, \omega)} \geq 0
\label{eq:tanh_monotone}
\end{align}
for $\omega > 0$, the RHS of \eqref{eq:ineq_linear} is monotone increasing. 
Then, letting $\omega = \pi/(2\alpha)$ in the RHS of \eqref{eq:ineq_linear}, 
we obtain that
\begin{align}
0 \leq \frac{p_{w}(\omega)}{\tanh(d\, \omega)} \leq \frac{p_{w}'(0)\, (\pi/(2\alpha))}{\tanh(\pi d/(2 \alpha))}
\end{align}
for $0 \leq \omega \leq \pi/(2\alpha)$. 
Therefore, we have that
\begin{align}
B_{p,1} 
\leq 
\frac{ (\pi/(2\alpha))^{2} }{\tanh(\pi d/(2 \alpha))} p_{w}'(0) 
=
- \frac{ \pi/(2\alpha)^{2} }{\tanh(\pi d/(2 \alpha))}
\int_{-\alpha}^{\alpha} x\, v(x)\, \mathrm{d}x. 
\label{eq:B_p_1_ineq}
\end{align}

\bigskip
\noindent
{\it Estimate of $B_{p,2}$ in \eqref{eq:B_p_2}.}
Using integration by parts, we have that
\begin{align}
p_{w}(\omega) 
& = 
\frac{1}{\mathrm{i}\, \pi}
\int_{-\alpha}^{\alpha} v(x)\, \mathrm{e}^{-\mathrm{i}\, \omega\, x}\, \mathrm{d}x \notag \\
& =
\frac{1}{\mathrm{i}\, \pi}
\left(
\left[
\frac{v(x)}{-\mathrm{i}\, \omega}\, \mathrm{e}^{-\mathrm{i}\, \omega\, x}
\right]_{x = -\alpha}^{x = \alpha}
-
\int_{-\alpha}^{\alpha} 
\frac{v'(x)}{-\mathrm{i}\, \omega}\, \mathrm{e}^{-\mathrm{i}\, \omega\, x}
\, \mathrm{d}x 
\right)
\notag \\
& =
\frac{1}{\mathrm{i}\, \pi}
\left(
\frac{\mathrm{e}^{-\mathrm{i}\, \omega\, \alpha} + \mathrm{e}^{\mathrm{i}\, \omega\, \alpha}}{-\mathrm{i}\, \omega}\, 
v(\alpha)
-
\int_{-\alpha}^{\alpha} 
\frac{v'(x)}{-\mathrm{i}\, \omega}\, \mathrm{e}^{-\mathrm{i}\, \omega\, x}
\, \mathrm{d}x \right) 
\notag \\
& =
\frac{2 \cos( \alpha \omega)}{\pi \, \omega}\, 
v(\alpha)
-
\frac{1}{\mathrm{i}\, \pi\, \omega^{2}}
\left(
2\i \sin( \alpha \omega)\, v'(\alpha)
+
\int_{-\alpha}^{\alpha} 
v''(x)\, \mathrm{e}^{-\mathrm{i}\, \omega\, x}
\, \mathrm{d}x 
\right),
\label{eq:p_w_int_by_parts}
\end{align}
where we have used the relations $v(-\alpha) = -v(\alpha)$ and $v'(-\alpha) = v'(\alpha)$. 
We consider the integral of the first term of \eqref{eq:p_w_int_by_parts} divided by $\tanh(d\, \omega)$
on $[\pi/(2\alpha), \infty)$. 
The integral can be written in the form
\begin{align}
& \int_{\pi/(2\alpha)}^{\infty}
\frac{2 \cos( \alpha \omega)}{\pi\, \omega\, \tanh(d\, \omega)}\, 
v(\alpha)\, \mathrm{d} \omega
=
v(\alpha)
\sum_{k=1}^{\infty} t_{l}
\end{align}
where
\begin{align}
t_{l} 
= 
\int_{\pi (2l+1)/(2\alpha)}^{\pi (2l+3)/(2\alpha)}
\frac{2 \cos( \alpha \omega)}{\pi\, \omega\, \tanh(d\, \omega)}
\, \mathrm{d} \omega
\qquad 
l = 0,1,2,\ldots.
\end{align}
Because the sequence $\{ t_{l} \}$ satisfies
\begin{align}
& (-1)^{l+1} t_{l} > 0 \qquad l = 0,1,2,\ldots, \notag \\
& | t_{0} | > | t_{1} | > | t_{2} | > \cdots, \notag 
\end{align}
we can apply a well-known method for estimating an alternating series to obtain
\begin{align}
& \left|
\int_{\pi/(2\alpha)}^{\infty}
\frac{2 \cos( \alpha \omega)}{\pi\, \omega\, \tanh(d\, \omega)}\, 
v(\alpha)\, \mathrm{d} \omega
\right|
\leq
| v(\alpha) |\, |t_{0}| \notag \\
& \leq
| v(\alpha) |\, \frac{2}{\pi}\, \frac{1}{(\pi/(2\alpha)) \tanh (\pi d/(2\alpha))} 
\int_{\pi/(2\alpha)}^{3 \pi/(2\alpha)} |\cos (\alpha \omega)|\, \d \omega \notag \\
& = 
\left|
v(\alpha)
\right| 
\frac{2}{(\pi^{2}/(2\alpha)) \tanh(\pi d/(2\alpha))} \frac{2}{\alpha} 
= 
\frac{8\, |v(\alpha)| }{\pi^{2} \, \tanh(\pi d/(2\alpha))}. 
\label{eq:p_w_int_by_parts_term1}
\end{align}
For the second term of \eqref{eq:p_w_int_by_parts}, we have
\begin{align}
& \left|
\int_{\pi/(2\alpha)}^{\infty}
\left[
\frac{-1}{\mathrm{i}\, \pi\, \omega^{2} \tanh(d\, \omega)}
\left(
2\i \sin( \alpha \omega)\, v'(\alpha)
+
\int_{-\alpha}^{\alpha} 
v''(x)\, \mathrm{e}^{-\mathrm{i}\, \omega\, x}
\, \mathrm{d}x 
\right)
\right] \mathrm{d}\omega
\right| \notag \\
& \leq
\frac{2}{\pi \tanh(\pi d/(2\alpha))} 
\left(
|v'(\alpha)| + \alpha \max_{x \in [-\alpha, \alpha] } |v''(x)|
\right)
\int_{\pi/(2\alpha)}^{\infty} \frac{1}{\omega^{2}}\, \mathrm{d}\omega \notag \\
& =
\frac{4 \alpha}{\pi^{2} \tanh(\pi d/(2\alpha))} 
\left(
|v'(\alpha)| + \alpha \max_{x \in [-\alpha, \alpha] } |v''(x)|
\right).
\label{eq:p_w_int_by_parts_term2}
\end{align}
By combining \eqref{eq:p_w_int_by_parts_term1} and \eqref{eq:p_w_int_by_parts_term2}, 
we obtain 
\begin{align}
B_{p,2}
\leq
\frac{4}{\pi^{2} \tanh(\pi d/(2\alpha))} 
\left(
2\, |v(\alpha)| + \alpha \, |v'(\alpha)| + \alpha^{2} \max_{x \in [-\alpha, \alpha] } |v''(x)|
\right).
\label{eq:B_p_2_ineq}
\end{align}

\bigskip
\noindent
{\it Estimate of $B_{q,1}$ in \eqref{eq:B_q_1}.}
For $\omega$ with $0 \leq |\omega| \leq \pi/(2 \alpha)$, it follows from \eqref{eq:tanh_monotone} that
\begin{align}
& \left|
\frac{\sin (\alpha \omega) }{\tanh(d\, \omega)}
\right|
=
\left|
\frac{\sin (\alpha \omega) }{\omega}
\right|
\left|
\frac{\omega}{\tanh(d\, \omega)}
\right| \notag \\
& \leq 
\alpha\, \frac{\pi/(2 \alpha)}{\tanh(\pi d/(2 \alpha))}
= 
\frac{\pi}{2 \tanh(\pi d/(2 \alpha))}. 
\end{align}
Furthermore, for $\omega$ with $|\omega| > \pi/(2 \alpha) $, we have that
\begin{align}
\left|
\frac{\sin (\alpha \omega) }{\tanh(d\, \omega)}
\right|
\leq
\left|
\frac{1}{\tanh(d\, \omega)}
\right|
\leq 
\frac{1}{\tanh(\pi d/(2 \alpha))}. 
\end{align}
Therefore, we have that
\begin{align}
B_{q,1}
\leq 
\frac{\pi}{2 \tanh(\pi d/(2 \alpha))}
\int_{-\infty}^{\infty}
\frac{\pi}{\pi^{2} + 4d^{2} \omega^{2}}
\, \mathrm{d}\omega
=
\frac{\pi^{2}}{4d \tanh(\pi d/(2 \alpha))}
\label{eq:B_q_1_ineq}
\end{align}

\bigskip
\noindent
{\it Estimate of $B_{q,2}$ in \eqref{eq:B_q_2}.} 
First, we consider the following estimate:
\begin{align}
& \left|
\int_{-\infty}^{\infty}
\frac{2d}{\pi^{2}+4d^{2} \omega^{2}}
\frac{\omega \cos(\alpha \omega)}{\tanh(d\, \omega)}
\, \mathrm{d}\omega
\right| 
= 
2 \left|
\int_{0}^{\infty}
\frac{2d}{\pi^{2}+4d^{2} \omega^{2}}
\frac{\omega \cos(\alpha \omega)}{\tanh(d\, \omega)}
\, \mathrm{d}\omega
\right| \notag \\
& \leq
2 \left(
\int_{0}^{\pi/(2d)}
\left|
\frac{2d}{\pi^{2}+4d^{2} \omega^{2}}
\frac{\omega \cos(\alpha \omega)}{\tanh(d\, \omega)}
\right|
\, \mathrm{d}\omega
+
\left|
\int_{\pi/(2d)}^{\infty}
\frac{2d}{\pi^{2}+4d^{2} \omega^{2}}
\frac{\omega \cos(\alpha \omega)}{\tanh(d\, \omega)}
\, \mathrm{d}\omega
\right| 
\right).
\label{eq:B_q_2_div}
\end{align}
Then, in order to estimate the first term in the parenthesis in \eqref{eq:B_q_2_div}, 
we derive the following inequality for $\omega$ with $0 \leq \omega \leq \pi/(2d)$:
\begin{align}
\frac{2d}{\pi^{2}+4d^{2} \omega^{2}}
\frac{\omega}{\tanh(d\, \omega)}
\leq 
\frac{2d}{\pi^{2} + 0}\, \frac{\pi/(2d)}{\tanh(d\, \pi/(2 d))}
=
\frac{1}{\pi \tanh(\pi/2)}.
\end{align}
Using this inequality, we have that
\begin{align}
\int_{0}^{\pi/(2d)}
\left|
\frac{2d}{\pi^{2}+4d^{2} \omega^{2}}
\frac{\omega \cos(\alpha \omega)}{\tanh(d\, \omega)}
\right|
\, \mathrm{d}\omega
& \leq 
\frac{1}{2d \tanh(\pi/2)}.
\label{eq:B_q_2_first}
\end{align}
To estimate the second term in the parenthesis in \eqref{eq:B_q_2_div},
we note that  
\begin{align}
& \frac{\mathrm{d}}{\mathrm{d} \omega}
\left(
\frac{2d}{\pi^{2}+4d^{2} \omega^{2}}
\frac{\omega}{\tanh(d\, \omega)}
\right) \notag \\
& =
\frac{d\, [ -2d \omega (\pi^{2} + 4d^{2} \omega^{2}) + (\pi^{2} - 4d^{2} \omega^{2}) \sinh(d\, \omega)]}
{(\pi^{2} + 4d^{2} \omega^{2})^{2} \sinh^{2}(d\, \omega)}
\leq 0
\end{align}
holds for $\omega$ with $\omega > \pi/(2d)$. Then, we have that
\begin{align}
& \left|
\int_{\pi/(2d)}^{\infty}
\frac{2d}{\pi^{2}+4d^{2} \omega^{2}}
\frac{\omega \cos(\alpha \omega)}{\tanh(d\, \omega)}
\, \mathrm{d}\omega
\right| \notag \\
& \leq 
\left|
\left(
\int_{\pi/(2d)}^{\pi (2l_{d, \alpha}+1)/(2\alpha)}
+
\int_{\pi (2l_{d, \alpha}+1)/(2\alpha)}^{\infty}
\right)
\frac{2d}{\pi^{2}+4d^{2} \omega^{2}}
\frac{\omega \cos(\alpha \omega)}{\tanh(d\, \omega)}
\, \mathrm{d}\omega
\right| \label{eq:B_q_2_second_line2} \\
& \leq 
\frac{2d}{2\pi^{2}}
\frac{\pi/(2d)}{\tanh(\pi/2)}
\frac{2}{\alpha}
+
\frac{2d}{\pi^{2}+4d^{2} \omega_{d, \alpha}^{2}}
\frac{\omega_{d, \alpha}}{\tanh(d\, \omega_{d, \alpha})}
\frac{2}{\alpha} \notag \\
& \leq 
\frac{2d}{2\pi^{2}}
\frac{\pi/(2d)}{\tanh(\pi/2)}
\frac{2}{\alpha}
+
\frac{2d}{2\pi^{2}}
\frac{\pi/(2d)}{\tanh(\pi/2)}
\frac{2}{\alpha}
=
\frac{2}{\pi \alpha \tanh(\pi/2)}, 
\label{eq:B_q_2_second}
\end{align}
where $l_{d, \alpha}$ is the minimum integer $l$ such that $\pi/(2d) \leq \pi (2l+1)/(2\alpha)$
and $\omega_{d, \alpha} = \pi (2l_{d, \alpha}+1)/(2\alpha)$.
For the first and second terms in \eqref{eq:B_q_2_second_line2}, we applied the inequality
\begin{align}
\int_{\pi/(2d)}^{\pi (2l_{d, \alpha}+1)/(2\alpha)} |\cos (\alpha \omega)| \, \d \omega 
\leq 
\int_{\pi (2l_{d, \alpha}-1)/(2\alpha)}^{\pi (2l_{d, \alpha}+1)/(2\alpha)} |\cos (\alpha \omega)| \, \d \omega 
= \frac{2}{\alpha}
\end{align}
and the same method for alternating series we used for \eqref{eq:p_w_int_by_parts_term1}, respectively. 
From the inequalities \eqref{eq:B_q_2_div}, \eqref{eq:B_q_2_first} and \eqref{eq:B_q_2_second}, 
we obtain
\begin{align}
B_{q,2} 
\leq \left( \frac{1}{d} + \frac{4}{\pi \alpha} \right) \frac{1}{\tanh(\pi/2)}.
\label{eq:B_q_2_ineq}
\end{align}

Finally, by combining 
\eqref{eq:bp_zero_triangle}, 
\eqref{eq:first}, 
\eqref{eq:second}, 
\eqref{eq:B_p_1_ineq}, 
\eqref{eq:B_p_2_ineq}, 
\eqref{eq:B_q_1_ineq}, and 
\eqref{eq:B_q_2_ineq}, 
we obtain the desired result.

\end{document}